\numberwithin{equation}{section}
\def\today{\ifcase\month\or Jan\or Febr\or  Mar\or  Apr\or May\or Jun\or  Jul\or Aug\or  Sep\or  Oct\or Nov\or  Dec\or\fi \space\number\day, \number\year}
\newcommand{\VVl}{\VV_{\mu}}	
\newcommand{\Sy}{{\mathrm{Sym}}} 
\newcommand{\CC}{\mathbb C}
\newcommand{\EE}{\mathbb E}
\newcommand{\FF}{\mathbb F}
\newcommand{\HH}{\mathbb H}
\newcommand{\NN}{\mathbb N}
\newcommand{\PP}{\mathbb P}
\newcommand{\QQ}{\mathbb Q}
\newcommand{\VV}{\mathbb V}
\newcommand{\ZZ}{\mathbb Z}
\newcommand\M[1]{{\mathcal M}_{#1}}
\newcommand\Mct[1]{{\mathcal M}_{#1}^c}
\newcommand\Hy[1]{{\mathcal H}_{#1}}
\newcommand\barM[1]{\overline{\mathcal M}_{#1}}
\newcommand\barA[1]{\overline{\mathcal A}_{#1}}
\newcommand\Vbar{{\overline{\mathcal V}}_{\mu}}
\newcommand\A[1]{{\mathcal A}_{#1}}
\newcommand\X[1]{{\mathcal X}_{#1}}
\newcommand\SL[2]{{\mathrm{SL}}({#1},{#2})}
\newcommand\GL[2]{{\mathrm{GL}({#1},{#2})}}
\newcommand\PGL[2]{{\mathrm{PGL}({#1},{#2})}}
\newcommand\Sp[2]{{\mathrm{Sp}}({#1},{#2})}
\newcommand\langepijl[1]{\buildrel {#1} \over \longrightarrow}
\newcommand{\Sym}{{\mathrm{Sym}}}
\newcommand{\tensor}{\otimes}
\numberwithin{equation}{section}
\newcommand{\glv}{\mathrm{GL}(V)}
\newcommand{\Vla}{{\VV}_{\mu}^{\prime}}
\newcommand{\VlaC}{{\VV}_{\mu}^{\prime}\otimes{\CC}}
\newcommand{\Vbla}{{\mathcal{V}}_{\mu}}
\newtheorem{theorem}{Theorem}[section]
\newtheorem{lemma}[theorem]{Lemma}
\newtheorem{proposition}[theorem]{Proposition}
\newtheorem{corollary}[theorem]{Corollary}
\newtheorem{definition-lemma}[theorem]{Definition-Lemma}
\theoremstyle{definition}
\newtheorem{example}[theorem]{Example}
\theoremstyle{remark}
\newtheorem{remark}[theorem]{Remark}
\newtheorem{notation}[theorem]{Notation}
\begin{document}

\title[Concomitants of Ternary Quartics and Modular Forms of Genus Three]
{Concomitants of Ternary Quartics and 
Vector-valued Siegel and Teichm\"uller Modular Forms of Genus Three}

\author{Fabien Cl\'ery}
\address{Department of Mathematics,
Loughborough University,
England}
\email{cleryfabien@gmail.com}

\author{Carel Faber}
\address{Mathematisch Instituut, Universiteit Utrecht,
Postbus 80010,
3508 TA Utrecht,
The Netherlands}
\email{C.F.Faber@uu.nl}

\author{Gerard van der Geer}
\address{Korteweg-de Vries Instituut, Universiteit van
Amsterdam, Postbus 94248,
1090 GE  Amsterdam, The Netherlands}
\email{geer@science.uva.nl}

\subjclass{10D, 11F46, 14H10, 14H45, 14J15, 14K10}
\begin{abstract}
We show how one can use the representation theory of ternary quartics to 
construct all vector-valued Siegel modular forms and Teichm\"uller 
modular forms of degree~$3$. 
The relation between the order of vanishing of 
a concomitant on the locus of double conics and the order of vanishing 
of the corresponding modular form on the hyperelliptic locus
plays an important role.
We also determine the connection between
Teichm\"uller cusp forms on $\barM{g}$ and the middle cohomology
of symplectic local systems on $\M{g}\,$.
In genus~$3$, we make this explicit in a large number of cases.
\end{abstract}

\maketitle
\centerline{\today}
\begin{section}{Introduction}
This paper contains two main results. Firstly, we show how the representation
theory associated to ternary quartics can be used to describe and
construct all vector-valued Siegel and Teichm\"uller modular forms
of degree~$3$ (Theorem~\ref{the_isom}).
This uses the classical notion of concomitants, of which invariants,
covariants, and contravariants are special cases.
Secondly, we describe for arbitrary~$g$
the precise relation between certain spaces of
Teichm\"uller cusp forms on $\barM{g}$ and the middle cohomology
of the standard symplectic local systems on $\M{g}$
(Theorem~\ref{topHodge}).
We illustrate the main results by a substantial number of examples,
focusing on genus~$3$ for the second theorem,
and obtain several other results of independent interest.

\bigskip
Let $\A{3}$ be the moduli space of principally polarized abelian varieties
of dimension $3$. 
Over the complex numbers the orbifold $\A{3}({\CC})$,
associated to the moduli space $\A{3}\,$,
can be written as an arithmetic quotient 
$\Gamma_3\backslash \mathfrak{H}_3\,$, where
the Siegel modular group $\Gamma_3=\Sp{6}{\ZZ}$ of degree $3$
acts on the Siegel upper half space $\mathfrak{H}_3$
of degree $3$ in the usual way.
The moduli stack $\A{3}$ carries a natural 
vector bundle ${\EE}$ of rank $3$, the Hodge bundle
with fibre $H^0(X,\Omega_X^1)$ over the point $[X]$ of $\A{3}\,$. 
Over ${\CC}$ it can be given as a quotient 
$\Gamma_3\backslash (\mathfrak{H}_3\times {\CC}^3)$ where the action
on ${\CC}^3$
corresponds to the standard representation of ${\rm GL}(3)$.
In a similar way, for each irreducible representation $\rho$ of 
${\rm GL}(3)$ there is an associated vector bundle ${\EE}_{\rho}$ 
that can be constructed from ${\EE}$ by using a Schur functor.  
Sections of powers of the determinant bundle 
$\det({\EE})$ on $\A{3}$ can be identified with scalar-valued
Siegel modular forms, while sections of ${\EE}_{\rho}$ can be
identified with vector-valued Siegel modular forms.
The vector bundle ${\EE}$, and more generally all ${\EE}_{\rho}$, extend
in a natural way over the standard smooth compactification of $\A{3}$.
By the Koecher principle holomorphic sections of ${\EE}_{\rho}$ extend
over this compactification.

Let $\M{3}$ denote the moduli space of curves of genus $3$. 
The Torelli morphism $t\colon\M{3}\to \A{3}$ 
is a morphism of algebraic stacks of degree $2$ ramified along
the hyperelliptic locus. By pullback under $t$
 we obtain the Hodge bundle ${\EE}^{\prime}$
on $\M{3}$ and for each irreducible representation $\rho$ of ${\rm GL}(3)$
a vector bundle ${\EE}^{\prime}_{\rho}$ on $\M{3}$. Sections of such a bundle
${\EE}^{\prime}_{\rho}$ are called Teichm\"uller modular forms of degree $3$.  
The vector bundle ${\EE}^{\prime}$ and hence all the 
${\EE}^{\prime}_{\rho}$ can be extended in a natural way over the 
Deligne-Mumford compactification 
$\overline{\mathcal M}_3$ and we show that a holomorphic section of 
${\EE}^{\prime}_{\rho}$ automatically extends to a holomorphic section of 
the extended bundle. 

The first Teichm\"uller modular form that is not (a pullback under the
Torelli map of)
a Siegel modular form
is the scalar-valued form $\chi_9$ of weight $9$
whose existence was proven by Ichikawa
\cite{Ichikawa1,Ichikawa2}.
There is an involution on the space of Teichm\"uller forms such that
a Teichm\"uller modular form $F$  that is invariant is the pullback
of a Siegel modular form, 
while an anti-invariant form is divisible by $\chi_9$, with quotient
the pullback of a Siegel modular form. The study of
Teichm\"uller modular forms of degree~$3$ reduces therefore to that
of Siegel modular forms (\S\ref{TMF}).

A nonhyperelliptic curve of genus $3$ has as canonical image a quartic
curve in ${\PP}^2$ and thus the open part ${\mathcal M}_3^{\rm nh}$
of $\M{3}$ that corresponds to nonhyperelliptic curves
has a description as the quotient of an open part of the space
of ternary quartics under the action of~$\mathrm{GL}(3)$.
Thus the representation theory of ternary quartics enters, i.e.,
the decomposition of $\Sym^d(\Sym^4(\CC^3))$ into irreducible
representations of~$\mathrm{GL}(3)$. 
The classical notion of concomitants of ternary quartics
makes this decomposition explicit (see~\cite{Chipalkatti}).
Our first main result 
is a complete description of Siegel (and Teichm\"uller) modular forms
of degree~$3$ in terms of concomitants.
We associate to a concomitant a meromorphic Siegel or
Teichm\"uller modular form which is holomorphic outside the hyperelliptic locus
on $\M{3}$ or $\A{3}$ (see \S\ref{ICMF}).
This is analogous
to our description of Siegel modular forms of degree~$2$ in terms of 
covariants of the action of $\mathrm{GL}(2)$ on the space of binary sextics,
see \cite{CFvdG}. 

The most basic concomitant is the universal ternary quartic $f$.
It defines a meromorphic Teichm\"uller modular form $\chi_{4,0,-1}$
of weight $(4,0,-1)$. Multiplication by $\chi_9$ makes it into
a holomorphic Siegel modular form $\chi_{4,0,8}\,$,
a section of $\Sym^4({\EE}) \otimes \det^8({\EE})$,
the `first' Siegel cusp form of degree $3$ (cf.~\S\ref{chi408}).

The fact that in general concomitants define meromorphic modular forms
that become holomorphic after multiplication with a suitable power of $\chi_9$
forces us to analyze the order of vanishing of the modular form 
associated to a concomitant along the hyperelliptic locus. 
We express this order of vanishing in terms of the order of vanishing
of the concomitant along the locus of double conics in the space of ternary
quartics. 
Remarkably, this enables us to identify the spaces of concomitants of
ternary quartics with given order of vanishing
along the locus of double
conics with the spaces of Siegel modular forms with 
given order of vanishing
along the divisor at infinity (Theorem~\ref{the_isom}).

Instead of working with $\chi_{4,0,-1}$ and multiplying with a power
of $\chi_9\,$, we can also work with $\chi_{4,0,8}$ and obtain
from a concomitant a holomorphic Siegel modular form, which may be
divisible by a power of $\chi_9\,$.
In order to use this in an efficient way, we need to know the
Fourier expansion of $\chi_{4,0,8}$ rather well.
We obtain it by analyzing the Schottky form,
a scalar-valued Siegel cusp form of degree~$4$ and weight~$8$, 
along the `diagonally embedded'
$\mathfrak{H}_3 \times \mathfrak{H}_1 \subset \mathfrak{H}_4$ in the 
Siegel upper half space of degree~$4$ (see \S\S\ref{chi408}--\ref{FJ}).

To demonstrate our approach, we construct a substantial number
of Siegel cusp forms and we compute some of their Hecke eigenvalues,
finding agreement with~\cite{BFvdG3}.

In \cite{BFvdG2}, Bergstr\"om and two of the present authors studied the
cohomology of symplectic local systems on~$\A3\,$. The same method can
also be used to study the cohomology of the corresponding local systems
on~$\M3\,$. (As is well-known, this cohomology is very closely related to the
cohomology of the moduli spaces~$\M{3,n}$ of $n$-pointed curves of genus~$3$.)
If the local system is even, its cohomology on $\M3$ equals that on
$t(\M3)=\A3-\A{2,1}\,$,
the moduli space of indecomposable principally polarized abelian threefolds.
However, in the odd case, the cohomology can {\sl not} be explained in
terms of $\A3\,$. It is here that Teichm\"uller modular forms enter.
Our second main result (Theorem~\ref{topHodge}) gives the precise relationship between certain
spaces of Teichm\"uller cusp forms and the middle cohomology of
symplectic local systems on $\M{g}\,$. This is a partial analogue of the
results of Faltings and Chai~\cite{F-C} for~$\A{g}\,$.
We then specialize to genus~$3$, where we have determined these spaces
of Teichm\"uller cusp forms in a substantial number of cases, via
computations with concomitants. Finally, we discuss how these results
match perfectly with conjectural formulas for the `motivic' Euler
characteristics of the symplectic local systems of weight at most $20$
on~$\M3\,$, obtained from counts of curves over finite fields,
as in~\cite{BFvdG2}.
\end{section}

\section*{Acknowledgements}
We thank J. Bergstr\"om, D. Petersen, and F. Rodriguez Villegas
for useful discussions and remarks.
We thank G. Farkas, R. Pandharipande, and the Einstein Stiftung
in connection with the result in the appendix (\S\ref{appendix}).
All three authors thank the Max-Planck-Institut f\"ur Mathematik in Bonn
for the hospitality and excellent working conditions
and the referee for useful comments.
The third author also thanks YMSC at Tsinghua University for hospitality
enjoyed there.
Part of the research of the first author was supported by
the EPSRC grant EP/N031369/1.
The second author is supported by the Dutch Research Council (NWO),
grant 613.001.651.

\begin{section}{Siegel Modular Forms}
Let ${\ZZ}^{2g}$ be the symplectic lattice of rank $2g$
with basis elements $e_1,\ldots, e_g,f_1,\ldots,f_g$
and with the symplectic pairing given by $\langle e_i,e_j\rangle=0=\langle f_i,f_j\rangle$ and $\langle e_i,f_j\rangle=\delta_{ij}$. 
We write
$\Gamma_g= {\rm Sp}(2g,{\ZZ})={\rm Aut}({\ZZ}^{2g}, \langle\, , \, \rangle)$
for the Siegel modular group of degree $g$. 
An element $\gamma \in \Gamma_g$
can be given as a $2\times 2$ matrix of $g\times g$ blocks with respect to the basis
$e_1,\ldots,e_g,f_1,\ldots,f_g$.
We will denote by $\A{g}$ the moduli stack of 
principally polarized abelian varieties
and by $\M{g}$ the moduli stack of curves of genus $g$ (for $g>1$). 
Over the complex numbers the orbifold $\A{g}({\CC})$, 
associated to the moduli space $\A{g}$, 
can be written as $\Gamma_g\backslash \mathfrak{H}_g$, where 
the Siegel modular group $\Gamma_g=\Sp{2g}{\ZZ}$ of degree $g$ 
acts on the Siegel upper half space 
$$
\mathfrak{H}_g=\{ \tau \in {\rm Mat}(g \times g, {\CC}):
\tau^t=\tau, {\rm Im}(\tau) > 0\}
$$ 
of degree $g$ in the usual way: 
$$
\tau \mapsto \gamma\cdot \tau =(a\tau+b)(c\tau+d)^{-1} \qquad \text{\rm for 
$\gamma=\left(\begin{matrix} a & b \\ c & d \\ \end{matrix} \right)
\in \Gamma_g\, .$}
$$
The moduli space $\A{g}$ carries a rank $g$ vector bundle,
the Hodge bundle ${\EE}$.
The induced bundle on  $\A{g}({\CC})$ corresponds 
to the factor of automorphy 
$$
j(\gamma,\tau)=c\tau +d \, .
$$
For an irreducible representation $\rho$ of $\GL{g}{\CC}$ of highest weight 
$(\rho_1, \ldots, \rho_g)$ with 
$\rho_1 \geq \rho_2 \geq \ldots \geq \rho_g$,
we have a corresponding vector bundle ${\EE}_{\rho}$ on $\A{g}$; 
the Hodge bundle corresponds to the standard representation 
with highest weight $(1,0,\ldots,0)$ 
and its determinant has highest weight $(1,\ldots,1)$. 
If $\rho: \GL{g}{\CC} \to {\rm GL}(W)$ is a finite-dimensional 
complex representation, the induced 
vector bundle on $\A{g}({\CC})$ is  defined by the
factor of automorphy
$$
j(\gamma, \tau)= \rho(c\tau+d)\, .
$$

A scalar-valued Siegel modular form of degree $g>1$ and 
weight $k$ is a holomorphic function
$f \colon \mathfrak{H}_g \to {\CC}$ satisfying 
$$
f(\gamma \cdot \tau)= \det(c\tau+d)^k f(\tau)
$$
for all $\gamma \in \Gamma_g$, while for $g=1$ 
we also need a growth condition at infinity.
If $W$ is a finite-dimensional complex vector space
and $\rho: \GL{g}{\CC} \to {\rm GL}(W)$
 a representation, then a vector-valued 
Siegel modular form of degree $g>1$ and weight $\rho$
is a holomorphic map $f: \mathfrak{H}_g \to W$ such that 
for all $\gamma \in \Gamma_g$
$$
f(\gamma\cdot \tau)= \rho(c\tau+d) f(\tau) \, .
$$
Siegel modular forms of weight $\rho$ can be interpreted 
as sections of the vector bundle ${\EE}_{\rho}\,$, and conversely.
Sections of the $k$th power $L^k$ of the determinant line bundle 
$L=\det({\EE})$ correspond to scalar-valued  Siegel modular forms
of degree $g$ and weight $k$. 
The vector bundle ${\EE}$ and
the bundles ${\EE}_{\rho}$ extend in a canonical way to 
Faltings-Chai type toroidal compactifications
of $\A{g}$ and the Koecher principle says that their sections do so too.

A Siegel modular form $F$ of weight $\rho$ admits a Fourier expansion
$$
F= \sum_{n} a(n) \, q^n \qquad 
\text{\rm with $q^n=e^{2\pi i \, {\rm Tr} (n \tau )}$},
$$
where $n$ runs over the half-integral symmetric positive semi-definite  
$g\times g$ matrices and $a(n) \in W$. (Half-integral means that $2n$
is integral with even entries on the diagonal.) 

We are interested in the case $g=3$. 
For an irreducible representation $\rho$
of highest weight $(\rho_1,\rho_2,\rho_3)$ of $\GL{3}{\CC}$ 
we denote the weight of the corresponding Siegel modular forms 
by 
$$
(i,j,k)= w(\rho)=
(\rho_1-\rho_2,\rho_2-\rho_3,\rho_3)\, . \eqno(1)
$$
The vector space of Siegel modular forms of weight $(i,j,k)$ on
$\Gamma_3$ is denoted by $M_{i,j,k}$. 
The space of cusp forms is denoted by $S_{i,j,k}$.
For scalar-valued Siegel modular forms we often abbreviate the weight $(0,0,k)$
by $k$.

Scalar-valued Siegel modular forms of degree $3$ form a graded ring:
$$
R=\oplus_{k=0}^{\infty} M_{0,0,k}\, .
$$
Vector-valued modular forms of degree $3$ form a graded module 
$M=\oplus_{i,j,k} M_{i,j,k}$
over the ring $R$.
The ring $R$ was described by
Tsuyumine \cite{Tsuyumine}. 
He gave $34$ generators and the generating function of $R$.
His work used results by Igusa \cite{Igusa1967} and Shioda \cite{Shioda}.
Igusa showed that there is 
an exact sequence
$$
0 \to \chi_{18} \, R \to R \, {\buildrel r \over \longrightarrow} \, I(2,8)\, ,
$$
where $\chi_{18}$ is a cusp form of weight $18$ (see Section \ref{chi18}) and 
$I(2,8)$ is the ring of invariants of binary octics. 
The map $r$ is induced by the restriction map to the zero locus
in $\A{3}$ of $\chi_{18}$. This locus is the closure of the 
image of the hyperelliptic locus ${\mathcal H}_3$ under the Torelli map.
Shioda determined the ring of invariants of binary octics \cite{Shioda}.
In the recent paper \cite{LerRit}, Lercier and Ritzenthaler reduce the
number of generators of $R$ to $19$. 

For Siegel modular forms, both the notion of degree and that of genus are used;
we tend to use degree for Siegel modular forms and genus for 
Teichm\"uller modular forms, but are not strict in this respect.

\end{section}
\begin{section}{The Scalar-valued Siegel Modular Form $\chi_{18}$}\label{chi18}
The scalar-valued Siegel modular form $\chi_{18}$ of degree $3$
is up to a normalization defined as the
product of the $36$ even theta constants. 
It is a cusp form of weight $18$. Its Fourier expansion starts with 
$$
\chi_{18}=-\left(\frac{\sigma_3-\sigma_2+\sigma_1-1}{\sigma_3}\right)^2
\, (\sigma_3^2-2\sigma_3\sigma_1+8\, \sigma_3+\sigma_1^2-4\, \sigma_2) \,
q_1^2q_2^2q_3^2 +\cdots
$$
where we use 
$$
q_j=e^{2 \pi i \tau_{jj}}, \quad u=e^{2 \pi i \tau_{12}}, \quad
v=e^{2 \pi i \tau_{13}}\quad \text{and}\quad  w=e^{2 \pi i \tau_{23}} \, , \eqno(2)
$$
and $\sigma_i$ is the $i$th elementary symmetric function in $u,v,w$.
Thus $\chi_{18}$ vanishes with multiplicity $2$ at infinity. 
It is well-known that the divisor of $\chi_{18}$ in the toroidal compactification $\tilde{\A{3}}$ is $H+2D$ with $H$ the 
hyperelliptic locus and $D$ the divisor of 
$\tilde{\mathcal A}_3$ at infinity:
$$
{\rm div}(\chi_{18})= H +2\, D \, .
\eqno(3)
$$
We now give a direct proof that $\chi_{18}$ is up to
a scalar the unique cusp form of weight $18$ vanishing twice at infinity.
This fact also follows from the result of Harris and Morrison 
\cite[Corollary~0.5]{Harris-Morrison} on the slope of effective divisors on 
$\overline{\mathcal{M}}_3$.

\begin{lemma}\label{uniquenesschi18} 
The space of Siegel cusp forms of degree $3$ and weight~$18$ vanishing with multiplicity $\geq 2$ on $D$ is generated by $\chi_{18}$.
\end{lemma}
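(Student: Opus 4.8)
The plan is to show that the space in question is at most one-dimensional; since $\chi_{18}$ already lies in it by~(3), this forces it to be generated by $\chi_{18}$. I would phrase the problem divisorially on the toroidal compactification $\tilde{\mathcal A}_3$. Writing $\lambda=c_1(\det\EE)$ for the Hodge class, a scalar-valued Siegel modular form of weight $18$ is a section of $(\det\EE)^{18}$, so its divisor is linearly equivalent to $18\lambda$. Equation~(3) reads $18\lambda\sim H+2D$ in $\mathrm{Pic}(\tilde{\mathcal A}_3)\otimes\QQ$, hence $18\lambda-2D\sim H$. Now if $F$ is a cusp form of weight $18$ with $\mathrm{ord}_D(F)\geq 2$, then $\mathrm{div}(F)\sim 18\lambda$ is effective and $\geq 2D$, so $E:=\mathrm{div}(F)-2D$ is an effective divisor with $E\sim H$. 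The heart of the argument is thus the rigidity statement $h^0\big(\tilde{\mathcal A}_3,\mathcal O(H)\big)=1$: the only effective divisor linearly equivalent to $H$ is $H$ itself. Granting this, $E=H$, so $\mathrm{div}(F)=H+2D=\mathrm{div}(\chi_{18})$, whence $F/\chi_{18}$ is a nowhere-vanishing modular form of weight $0$, i.e.\ a constant, and $F=c\,\chi_{18}$.

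Equivalently, and this is how I would verify the rigidity directly rather than quoting the slope bound of Harris--Morrison, it suffices to prove that $\mathrm{ord}_D(F)\geq 2$ forces $F$ to vanish on the hyperelliptic locus: once $r(F)=0$, the exact sequence $0\to\chi_{18}R\to R\xrightarrow{r}I(2,8)$ gives $F=\chi_{18}G$ with $G\in R$ of weight $0$, hence $G$ constant. To see $r(F)=0$ I would pass to the Fourier--Jacobi expansion along the ray defining $D$, $F=\sum_{m\geq 0}\phi_m\,q_3^{\,m}$, with $\phi_m$ a Jacobi form of weight $18$ and index $m$ on $\mathfrak H_2$. Being a cusp form gives $\phi_0=0$, and one checks that $\mathrm{ord}_D(F)\geq 2$ gives $\phi_1=0$ as well, so that the leading coefficient $\phi_2$ lands in a space of Jacobi cusp forms that I would compute to be one-dimensional, spanned by the corresponding coefficient of $\chi_{18}$. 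Separately I would rule out $\mathrm{ord}_D(F)\geq 3$, which by the same divisor bookkeeping is exactly the non-effectivity of $H-D$.

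The main obstacle is precisely this last analysis: the rigidity of $H$, or in concrete terms the control of the relevant spaces of index-$1$ and index-$2$ Jacobi cusp forms (equivalently, of the binary octic invariants in $I(2,8)$ cut out by the boundary conditions). Everything else is formal once~(3) is available; the genuine input is that no weight-$18$ cusp form can vanish to order $\geq 2$ on $D$ without being divisible by $\chi_{18}$. This is the slope phenomenon encoded in the Harris--Morrison estimate, and the point of a direct proof is to reprove it by the boundary and Fourier--Jacobi computation just described.
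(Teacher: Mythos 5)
Your divisorial bookkeeping is correct as far as it goes, but what you have written is a plan rather than a proof: every substantive input is deferred. (i) The rigidity statement $h^0\bigl(\tilde{\mathcal{A}}_3,\mathcal{O}(H)\bigr)=1$ is, once $(3)$ is granted, \emph{equivalent} to the lemma — an effective divisor linearly equivalent to $H\sim 18\lambda-2D$ is precisely (the residual divisor of) a weight-$18$ form vanishing doubly on $D$ — so ``granting this'' begs the question. (ii) In the ``direct'' route, the one-dimensionality of the space of weight-$18$, index-$2$ Jacobi cusp forms on $\mathfrak{H}_2$ that can occur as the leading Fourier--Jacobi coefficient is only announced (``I would compute''), and this is a nontrivial dimension statement about Jacobi forms of degree $2$ for which you indicate no method. (iii) Likewise the exclusion of $\mathrm{ord}_D(F)\geq 3$ is reduced to the non-effectivity of $H-D\sim 18\lambda-3D$, again a slope-type assertion of essentially the same strength as the lemma, left unproven. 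You candidly acknowledge in your final paragraph that these are ``the main obstacle,'' but acknowledging that the genuine input is missing does not supply it. There is also a logical slip: the Fourier--Jacobi expansion along $D$ cannot establish $r(F)=0$, since the map $r$ in Igusa's sequence restricts to the closure of the hyperelliptic locus, an \emph{interior} divisor of $\A{3}$, not to the boundary. What your computation would give, if completed, is that $F-c\,\chi_{18}$ has order $\geq 3$ along $D$ and hence vanishes, yielding $F=c\,\chi_{18}$ directly, with no need of Igusa's sequence at all.

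For comparison, the paper's proof replaces all of this by finite linear algebra: $\dim S_{0,0,18}=4$, with explicit basis $\chi_{18}$, $\chi_{18}|T_2$, $E_4F_{14}$, $E_6F_{12}$; the Fourier coefficients at the three $\GL{3}{\ZZ}$-orbits of half-integral positive definite matrices with unit diagonal (those with $\det(2n)=8,6,4$) form an invertible $3\times 3$ block on the last three basis vectors, while they vanish for $\chi_{18}$. Since $\mathrm{ord}_D\geq 2$ kills exactly these coefficients (they live in the index-$1$ Fourier--Jacobi coefficient), any cusp form of weight $18$ vanishing doubly on $D$ is proportional to $\chi_{18}$. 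The most economical repair of your argument is precisely such a finite computation — of the Fourier coefficients above, or of your index-$1$ and index-$2$ Jacobi spaces — or else an honest appeal to the Harris--Morrison slope bound, which the paper notes gives an alternative proof; note also that the paper's Theorem~\ref{the_isom} and Corollary~\ref{chi18b} later subsume the statement by a genuinely different mechanism.
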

\begin{proof} The dimension of $S_{0,0,18}$ is $4$. In order to construct a basis
we consider the Eisenstein series $E_4$ and $E_6$ whose Fourier expansions start with
$$
E_4= 1+240(q_1+q_2+q_3) +\cdots, \qquad
E_6=1-504(q_1+q_2+q_3) + \cdots
$$
and cusp forms $F_{12}$ and $F_{14}$ of weight $12$ and $14$; note that
$\dim S_{0,0,12}=1=\dim S_{0,0,14}$ (cf.~\cite{Tsuyumine}).
We normalize these forms such that the
following table gives their first Fourier coefficients:

\begin{footnotesize}
\smallskip
\vbox{
\bigskip\centerline{\def\quad{\hskip 0.6em\relax}
\def\quod{\hskip 0.5em\relax }
\vbox{\offinterlineskip
\hrule
\halign{&\vrule#&\strut\quod\hfil#\quad\cr
height2pt&\omit&&\omit &&\omit &&\omit  &\cr
& $n$ &&
$\det(2n)$ && $F_{12}$ && $F_{14}$  &\cr
\noalign{\hrule}
& $1_3$ &&
$8$ && $164$ && $20$ &\cr
& $A_1(1/2)\oplus A_2(1/2)$ &&
$6$ && $18$ && $-6$ &\cr
& $A_3(1/2)$ &&
$4$ && $1$ && $1$ &\cr
} \hrule}
}}
\end{footnotesize}

\noindent
Here we write the Fourier series of a modular form as before as
$$
\sum_{n\geq 0} a(n)\, q^n \qquad \text{\rm (using $q^n= e^{2 \pi i {\rm Tr}( n\tau)}$)}
$$
with $n$ running over the half-integral positive semi-definite symmetric 
matrices; $A_i$ refers to the Gram matrix of the standard root lattice.
Recall that
for a scalar-valued modular form we have $a(n)=a(u^t n u)$ for all
$u \in \GL{3}{\ZZ}$. We look at the induced action of $\GL{3}{\ZZ}$.
The set ${\mathcal N}$ of half-integral positive definite symmetric matrices
with $1$'s on the diagonal contains elements from three distinct orbits:
the orbit of $1_3$ (one element), the orbit of
$A_1(1/2)\oplus A_2(1/2)$ (sixteen elements), and
the orbit of $A_3(1/2)$ (six elements).
This implies that we can read off the Fourier coefficient of $q_1q_2q_3$ of 
$F_{12}$ and $F_{14}$ from the table above.
We claim that a basis of $S_{0,0,18}$ is given by 
$$
\chi_{18}, \, \chi_{18}{|T_2}, \,
E_4F_{14},  \,  E_6F_{12}
$$
with $T_2$ the Hecke operator of the prime $2$.
We can calculate the Fourier coefficient of $\chi_{18}{|T_2}$
using \cite{C-vdG}. 
We get the following Fourier coefficients:

\begin{footnotesize}
\smallskip
\vbox{
\bigskip\centerline{\def\quad{\hskip 0.6em\relax}
\def\quod{\hskip 0.5em\relax }
\vbox{\offinterlineskip
\hrule
\halign{&\vrule#&\strut\quod\hfil#\quad\cr
height2pt&\omit&&\omit &&\omit &&\omit &&\omit &&\omit  &\cr
& $n$ &&
$\det(2n)$ && $E_6\, F_{12}$ && $E_4\, F_{14}$ && $\chi_{18}\vert T_2$ && $\chi_{18}$ &\cr
\noalign{\hrule}
& $1_3$ &&
$8$ && $164$ && $20$ && $108$ && $0$ &\cr
& $A_1(1/2)\oplus A_2(1/2)$ &&
$6$ && $18$ && $-6$ && $0$ && $0$ &\cr
& $A_3(1/2)$ &&
$4$ && $1$ && $1$ && $-1$ && $0$ &\cr
} \hrule}
}}
\end{footnotesize}

\noindent
This shows that the four cusp forms of weight $18$  
are linearly independent and  that $\chi_{18}$
is up to a nonzero scalar the unique cusp form of weight $18$ 
that vanishes twice at infinity. 
\end{proof}

\begin{remark}
In \S11, we will obtain a considerably more general result,
with a different method of proof (Theorem~\ref{the_isom} and
Corollary~\ref{chi18b}).
\end{remark}
\end{section}
\begin{section}{The Vector-valued Siegel Modular Form $\chi_{4,0,8}$}\label{chi408}
The modular form generating $S_{4,0,8}$ is the unique nonzero cusp form (up to scalar)
for which $i+2j+3k$ is minimal (equal to $28$) (cf.~\cite{Taibi}).
This form $\chi_{4,0,8}$  will play a central role in this paper.
Its Fourier expansion can be obtained as in~\cite{C-vdG}
by taking the Schottky form of weight~$8$
on $\Gamma_4$ and developing it in the normal directions to $\mathfrak{H}_1 \times
\mathfrak{H}_3$ in $\mathfrak{H}_4$. The lowest order term gives a 
nonzero multiple of $\Delta \otimes \chi_{4,0,8}$ in $S_{12}(\Gamma_1) \otimes
S_{4,0,8}(\Gamma_3)$.  We normalize $\chi_{4,0,8}$  so that its Fourier 
expansion starts as follows
$$
\left( \begin{smallmatrix}
0\\ 0 \\ 0 \\
(v-1)^2(w-1)^2/vw \\
(u-1)(v-1)(w-1)(-1+1/vw+1/uw-1/uv) \\
(u-1)^2(w-1)^2/uw \\
0\\
(u-1)(v-1)(w-1)(-1+1/vw-1/uw+1/uv) \\
(u-1)(v-1)(w-1)(-1-1/vw+1/uw+1/uv)\\
0\\ 0 \\ 0 \\
(u-1)^2(v-1)^2/uv \\
0\\ 0\\
\end{smallmatrix}
\right )q_1q_2q_3+\cdots \, ,
$$
where we use the same variables as in (2).
This modular form is a lift of $\Delta=\sum \tau(n)q^n
 \in S_{12}(\Gamma_1)$ 
and its Hecke eigenvalue at a prime $p$ is by the results of~\cite{BFvdG2} predicted to be
$$
\tau(p)\, (p^5+\tau(p)+p^6).
$$

We embed $\mathfrak{H}_2\times \mathfrak{H}_1$ in $\mathfrak{H}_3$
via 
$$
(\tau^{\prime},\tau^{\prime\prime}) \mapsto 
\left( \begin{matrix} \tau^{\prime} & 0 \\ 0 & \tau^{\prime\prime}\\ 
\end{matrix} \right)
$$
and consider the vanishing of $\chi_{4,0,8}$ along this locus.
\begin{lemma}\label{I2notI3} 
Let ${\mathcal I} \subset {\mathcal O}_{\mathfrak{H}_3}$ 
be the
ideal sheaf of $\mathfrak{H}_2\times \mathfrak{H}_1$ in $\mathfrak{H}_3$.
The 
coordinates of the form $\chi_{4,0,8}$ 
lie in ${\mathcal I}^2$, but not all in~${\mathcal I}^3$.
\end{lemma}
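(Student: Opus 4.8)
The plan is to expand $\chi_{4,0,8}$ in a Taylor series in the directions normal to $\mathfrak{H}_2\times\mathfrak{H}_1$. Write a point of $\mathfrak{H}_3$ as $\tau=\left(\begin{smallmatrix}\tau' & z\\ z^{t} & \tau''\end{smallmatrix}\right)$ with $\tau'\in\mathfrak{H}_2$, $\tau''\in\mathfrak{H}_1$ and $z=(\tau_{13},\tau_{23})^{t}$; then $\mathcal I=(\tau_{13},\tau_{23})$, and the two assertions are equivalent to saying that the Taylor coefficients $\chi^{(0)}$ and $\chi^{(1)}$ of orders $0$ and $1$ in $z$ vanish identically on $\mathfrak{H}_2\times\mathfrak{H}_1$, while the order-$2$ coefficient $\chi^{(2)}$ does not. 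Once $\chi^{(0)},\dots,\chi^{(k-1)}$ vanish, $\chi^{(k)}$ is a well-defined holomorphic object equivariant for $\Gamma_2\times\Gamma_1\subset\Gamma_3$. Under $\CC^3=\CC^2\oplus\CC$ the representation $\Sym^4\otimes\det^8$ and the conormal directions split into $\mathrm{GL}(2)\times\mathrm{GL}(1)$-pieces, so each graded piece of $\chi^{(k)}$ is a vector-valued modular form on $\mathfrak{H}_2\times\mathfrak{H}_1$. The monomial $X_1^{a_1}X_2^{a_2}X_3^{a_3}$ of $\Sym^4$ carries genus-one weight $a_3+8$; since $z$ transforms with an extra factor $(c''\tau''+d'')^{-1}$, each additional order in $z$ raises the genus-one weight by $1$, so the corresponding piece of $\chi^{(k)}$ has genus-one weight $a_3+8+k$ and a genus-two weight obtained by tensoring $\Sym^{4-a_3}(\CC^2)\otimes\det^8$ with the conormal contribution.

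The decisive input is that $\chi_{4,0,8}$ is a cusp form: its Fourier coefficients $a(n)$ vanish unless $n$ is positive definite, so for each $k$ the $q_3^{0}$- and $q_2^{0}$-parts of $\chi^{(k)}$ vanish and every graded piece is a cusp form both in $\tau''$ and in $\tau'$. I would then run through the pieces. Any piece with $a_3+k$ odd has odd genus-one weight and vanishes because $M_{\ell}(\Gamma_1)=0$ for odd $\ell$; this already disposes of all the ``odd'' coordinates. For $\chi^{(0)}$ the surviving pieces have $a_3\in\{0,2,4\}$ and genus-one weights $8,10,12$: the weights $8$ and $10$ die since $S_8(\Gamma_1)=S_{10}(\Gamma_1)=0$, and the $a_3=4$ piece, of genus-two weight $\det^8$, dies because $S_{0,8}(\Gamma_2)=0$. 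For $\chi^{(1)}$ the surviving pieces have $a_3\in\{1,3\}$ and genus-one weights $10,12$: the first dies by $S_{10}(\Gamma_1)=0$, while the second has genus-two content $\Sym^2\otimes\det^8$ together with the scalar weight $\det^9$, and both die because $S_{2,8}(\Gamma_2)=0$ and there are no scalar cusp forms of the odd weight $9$. Hence $\chi^{(0)}=\chi^{(1)}=0$, i.e.\ all coordinates lie in $\mathcal I^2$.

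For the second assertion I would fall back on the explicit Fourier expansion above. The coordinate of $\chi_{4,0,8}$ along $X_1X_2X_3^2$ has leading term $(u-1)(v-1)(w-1)(-1-1/vw+1/uw+1/uv)\,q_1q_2q_3$. Computing its Hessian in $(\tau_{13},\tau_{23})$ at $v=w=1$, the pure second derivatives vanish but the mixed one equals a nonzero multiple of $(u-1)^2/u$, so this coordinate vanishes to order exactly $2$ along $\mathfrak{H}_2\times\mathfrak{H}_1$. Thus $\chi^{(2)}\neq0$ and the coordinates are not all in $\mathcal I^3$.

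The hard part is the representation-theoretic bookkeeping rather than any single computation. One must first identify correctly how the conormal bundle transforms along $\mathfrak{H}_2\times\mathfrak{H}_1$, and in particular that its symmetric powers \emph{raise} the genus-one weight; this sign is easy to get backwards and the whole argument turns on it. (It is also pinned down by the explicit coefficient: with the opposite sign $\chi^{(2)}$ would have genus-one weight $8$ and $S_8(\Gamma_1)=0$ would wrongly force $\chi^{(2)}=0$, contradicting the nonzero mixed derivative just found.) Second, one must check that the decomposition is exhaustive, so that the only pieces not killed outright by $M_{\mathrm{odd}}(\Gamma_1)=0$ are exactly those controlled by $S_{0,8}(\Gamma_2)=S_{2,8}(\Gamma_2)=0$. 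The genuinely dangerous piece is the $a_3=3$ part of $\chi^{(1)}$, which a priori could be a nonzero genus-two form of weight $\Sym^2\otimes\det^8$ times $\Delta\in S_{12}(\Gamma_1)$; its vanishing is precisely what forces the order along $\mathfrak{H}_2\times\mathfrak{H}_1$ to be $2$ rather than $1$.
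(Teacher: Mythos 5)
Your proof is correct, but it takes a genuinely different route from the paper's. The paper deduces the order of vanishing externally, from the Schottky form: developing $J_8$ along $\mathfrak{H}_2\times\mathfrak{H}_2$ gives $\chi_{6,8}\otimes\chi_{6,8}$ as first term, while developing along $\mathfrak{H}_3\times\mathfrak{H}_1$ gives $\chi_{4,0,8}\otimes\Delta$, and a comparison of these vanishing orders (citing \cite[Section 5]{C-vdG}) yields multiplicity exactly $2$ along $\mathfrak{H}_2\times\mathfrak{H}_1$; as a second method the paper points to the explicit expansion in \cite[p.~39]{C-vdG}. You instead work intrinsically on $\mathfrak{H}_3$: branch $\Sym^4(\CC^3)\otimes\det^8$ under $\mathrm{GL}(2)\times\mathrm{GL}(1)$, note that each $z$-derivative twists by the conormal bundle (raising the genus-one weight by $1$ and tensoring the genus-two weight with the standard representation), use cuspidality of $\chi_{4,0,8}$ to make every graded Taylor coefficient bi-cuspidal, and then kill $\chi^{(0)}$ and $\chi^{(1)}$ piece by piece via $S_8(\Gamma_1)=S_{10}(\Gamma_1)=0$, the vanishing of odd-weight forms on $\Gamma_1$, and $S_{0,8}(\Gamma_2)=S_{2,8}(\Gamma_2)=S_9(\Gamma_2)=0$. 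Your bookkeeping checks out (including the decomposition $\Sym^1\otimes\Sym^1=\Sym^2\oplus\det$ giving the two dangerous pieces of the $a_3=3$ part of $\chi^{(1)}$), and your nonvanishing computation from the displayed $q_1q_2q_3$-coefficient is valid, since the order-$2$ Taylor coefficient's $q_1q_2q_3$-part can only come from that displayed term; indeed the mixed derivative gives $-2(2\pi i)^2(u-1)^2/u\neq0$. This last step essentially coincides with the paper's alternative remark; what your route buys is independence from the degree-$4$ Schottky comparison, at the price of needing the standard dimension tables for small-weight cusp forms of degree $1$ and~$2$.

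One caution: for the $\det^9$-piece you write that it dies because ``there are no scalar cusp forms of the odd weight $9$.'' The statement $S_9(\Gamma_2)=0$ is true, but if you mean to justify it by parity alone, that fails in genus $2$: the element $-1_4$ acts trivially on scalar weights, and odd-weight scalar forms on $\Gamma_2$ do exist, the first being Igusa's $\chi_{35}$ of weight $35$. You need Igusa's structure theorem (odd-weight forms are $\chi_{35}$ times even-weight forms) to conclude $S_9(\Gamma_2)=0$ --- unlike the genus-one factors, where oddness of the weight genuinely forces vanishing on $\mathrm{SL}(2,\ZZ)$.
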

\begin{proof}
Consider the Schottky form $J_8$,
a scalar-valued Siegel cusp form of weight $8$
and degree $4$. If we restrict it to $\mathfrak{H}_2\times
\mathfrak{H}_2$ we find as first term in its development in the
normal directions to $\mathfrak{H}_2\times \mathfrak{H}_2$ the tensor product
$\chi_{6,8} \otimes \chi_{6,8}$, where $\chi_{6,8}$ is a Siegel cusp form
of weight $(6,8)$ in degree $2$. If we develop $J_8$ along
$\mathfrak{H}_3\times \mathfrak{H}_1$  then we find as first
term in the normal directions $\chi_{4,0,8}\otimes
\Delta$ with $\Delta$ the elliptic modular cusp form of weight $12$.
We refer to \cite{C-vdG} for these facts. Comparing the degrees of vanishing
along $\mathfrak{H}_3\times \mathfrak{H}_1$ and $\mathfrak{H}_2\times
\mathfrak{H}_2$, see~\cite[Section 5]{C-vdG},  we see that $\chi_{4,0,8}$ vanishes
with multiplicity~$2$ along $\mathfrak{H}_2 \times \mathfrak{H}_1$.

Another way to see this is by looking at the expansion of $\chi_{4,0,8}$
given in \cite[p.\ 39]{C-vdG}.  We thus see that
the coordinates of
$\chi_{4,0,8}$ 
lie in ${\mathcal I}^2$, but not all in ${\mathcal I}^3$.
\end{proof}

We can trivialize the pullback of ${\EE}$ to $\mathfrak{H}_3$ as $\mathfrak{H}_3 \times {\CC}^3$.
We choose coordinates $z_1,z_2,z_3$ on ${\CC}^3$.
Since the pullback of 
$\Sym^2({\EE})$ can be identified with the cotangent bundle of $\mathfrak{H}_3$
the coordinates $\tau_{ij}$ correspond to $z_i  z_j$.
In particular, a basis of ${\Sym}^4({\EE})$ corresponds to the monomials of degree $4$
in $z_1,z_2$ and $z_3$. Therefore, we can write $\chi_{4,0,8}$ as
$$
\chi_{4,0,8}=\sum_I n_I \alpha_I \, z^I \, ,
$$
where for a multi-index $I=(i_1,i_2,i_3)$ we inserted a normalization factor 
$n_I={4!}/{i_1!i_2!i_3!}$.
We take the sum over the lexicographically ordered monomials 
$z^I=z_1^{i_1}z_2^{i_2}z_3^{i_3}$ of degree $4$ and $\alpha_I$ is a holomorphic
function on $\mathfrak{H}_3$ which we can present as
a Fourier series living in ${\CC}[u^{\pm 1},v^{\pm 1}, w^{\pm 1}][[q_1,q_2,q_3]]$.
The $z^I$ are just dummy variables to indicate the coordinates 
 $n_I \, \alpha_I$ of the vector-valued modular form
$\chi_{4,0,8}$.

\medskip
The symmetric group $\mathfrak{S}_3$ acts on ${\CC}^3$ by permuting $z_1,z_2, z_3$ 
and hence induces an action on $\mathfrak{H}_3$ via
$$
(\tau_{ij}) \mapsto (\tau_{\sigma(i) \sigma(j)}) \qquad \sigma \in \mathfrak{S}_3\, .
\eqno(4)
$$
This action of $\mathfrak{S}_3$ derives from an action of a subgroup
of $\Gamma_3$ by taking a $3\times 3$ permutation matrix $a=(a^{-1})^t$ and using 
$(\begin{smallmatrix} a & 0 \\ 0 & a \\ \end{smallmatrix})$.
To give the action on the Fourier expansion,
note that we have an induced action of $\mathfrak{S}_3$ on $q_1,q_2,q_3$ 
and on $u,v,w$.
\begin{lemma}\label{actionS3}
The action of $\mathfrak{S}_3$ on $\{1,2,3\}$ 
induces an action on the coordinates 
$\alpha_I$ of $\chi_{4,0,8}$ given by
$$
\alpha_I(q_1,q_2,q_3,u,v,w) 
\mapsto 
\alpha_{\sigma(I)}
\left( q_{\sigma(1)},q_{\sigma(2)}, q_{\sigma(3)},\sigma(u),\sigma(v),\sigma(w)
\right)\, .
$$
\end{lemma}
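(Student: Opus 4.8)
The plan is to read the identity off directly from the modular transformation law of $\chi_{4,0,8}$ under the particular element of $\Gamma_3$ that induces the permutation action. As explained just after (4), the action $(\tau_{ij})\mapsto(\tau_{\sigma(i)\sigma(j)})$ is the action of $\gamma=\left(\begin{smallmatrix} a & 0 \\ 0 & a\end{smallmatrix}\right)\in\Gamma_3$, where $a$ is the permutation matrix attached to $\sigma$. For this $\gamma$ one has $c=0$ and $d=a$, so the factor of automorphy is $j(\gamma,\tau)=c\tau+d=a$. Since $\chi_{4,0,8}$ has weight $(4,0,8)$, i.e.\ $\rho=\Sym^4(\mathrm{std})\otimes\det^{8}$ (highest weight $(12,8,8)$), and since $\det(a)=\pm1$ has eighth power $1$, the transformation law reduces to $\chi_{4,0,8}(\gamma\cdot\tau)=\Sym^4(a)\,\chi_{4,0,8}(\tau)$. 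First I would expand both sides in the monomial basis $\{z^I\}$ of $\Sym^4({\EE})$ and compare coefficients.

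On the right-hand side, $a$ permutes the coordinates $z_1,z_2,z_3$ according to $\sigma$, so $\Sym^4(a)$ merely permutes the basis of degree-$4$ monomials, $z^I\mapsto z^{\sigma(I)}$, with no extra scalar. Because the normalization factor $n_I=4!/(i_1!\,i_2!\,i_3!)$ depends only on the multiset of exponents, it is invariant under this permutation, and after relabeling one gets $\Sym^4(a)\,\chi_{4,0,8}(\tau)=\sum_I n_I\,\alpha_{\sigma^{-1}(I)}(\tau)\,z^I$. On the left-hand side, $\chi_{4,0,8}(\gamma\cdot\tau)=\sum_I n_I\,\alpha_I(\gamma\cdot\tau)\,z^I$. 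Matching the coefficient of each $z^I$ therefore yields $\alpha_I(\gamma\cdot\tau)=\alpha_{\sigma^{-1}(I)}(\tau)$, which is the whole content of the lemma once the left-hand side is rewritten in Fourier variables.

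That rewriting is the only genuine bookkeeping. Since $(\gamma\cdot\tau)_{ij}=\tau_{\sigma(i)\sigma(j)}$, the diagonal entries give $q_j(\gamma\cdot\tau)=q_{\sigma(j)}(\tau)$, while the off-diagonal variables $u,v,w$, attached to the pairs $\{1,2\},\{1,3\},\{2,3\}$, are permuted among themselves by the induced action of $\sigma$ on these pairs; this is exactly the action written as $u\mapsto\sigma(u)$, $v\mapsto\sigma(v)$, $w\mapsto\sigma(w)$ (note no inverses appear, as $\tau$ is symmetric). Substituting these into the Fourier series turns $\alpha_I(\gamma\cdot\tau)$ into $\alpha_I(q_{\sigma(1)},q_{\sigma(2)},q_{\sigma(3)},\sigma(u),\sigma(v),\sigma(w))$, so the coefficient identity reads $\alpha_I(q_{\sigma(1)},q_{\sigma(2)},q_{\sigma(3)},\sigma(u),\sigma(v),\sigma(w))=\alpha_{\sigma^{-1}(I)}(q_1,q_2,q_3,u,v,w)$; replacing $I$ by $\sigma(I)$ gives precisely the displayed formula. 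The main point to watch is the direction of the permutations: one must keep consistent conventions for how $a$ acts on the $z_i$ and how $\sigma$ acts on indices and on Fourier variables, so that the $\sigma^{-1}$ produced by matching coefficients and the one absorbed by the variable substitution combine into the single $\sigma$ in the statement. No convergence issue intervenes, since this is a term-by-term identity of formal Fourier series.
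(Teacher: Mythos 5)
Your proof is correct and is exactly the argument the paper intends: the lemma is stated there without proof, but the paragraph preceding it (realizing the $\mathfrak{S}_3$-action (4) via the symplectic matrices $\left(\begin{smallmatrix} a & 0 \\ 0 & a \end{smallmatrix}\right)\in\Gamma_3$ with $a$ a permutation matrix) points to precisely your computation from the transformation law, with $j(\gamma,\tau)=a$ and $\det(a)^{8}=1$ killing the determinant twist. Your attention to the direction of the permutations --- the $\sigma^{-1}$ from matching coefficients of $z^I$ cancelling against the one absorbed by the substitution of Fourier variables, using that $n_I$ is permutation-invariant --- is the only delicate point, and you handle it correctly.
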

The action on the fifteen coordinates has 
one orbit of length $6$ and three orbits of length $3$.
The reader may check that the action of 
$(23)\in \mathfrak{S}_3$ on ${\Sym}^4({\EE})$ 
is given by sending the transpose of $v_I=(v_1,v_2,\ldots,v_{15})$
to the transpose of
$$
(v_1,v_3,v_2,v_6,v_5,v_4,v_{10},v_9,v_8,v_7,v_{15},v_{14},v_{13},v_{12},v_{11})
$$
and that of $(13)\in \mathfrak{S}_3$ is given by
$$
(v_{15},v_{14},v_{10},v_{13}, v_{9},v_{6},v_{12},v_8,v_5,v_3,v_{11}, v_7,v_4,v_2,v_1).
$$ 
For $\chi_{4,0,8}$ it
 thus suffices to give the coordinates $v_1,v_2,v_4$ and $v_5$ only;
the other coordinates can then be calculated by using the action of
Lemma~\ref{actionS3}.
\end{section}
\begin{section}{The Fourier-Jacobi Expansion of the Schottky Form}
\label{FJ}
In the preceding section the form $\chi_{4,0,8}$ was defined by
developing the Schottky form of degree $4$ and weight $8$
along $\mathfrak{H}_1 \times \mathfrak{H}_3$ and observing that the
first nonzero term is $\Delta \otimes \chi_{4,0,8}$ in $S_{12}(\Gamma_1)
\tensor S_{4,0,8}(\Gamma_3)$. For our application we need to be able
to calculate the Fourier expansion of $\chi_{4,0,8}$ quite far
and for that we need the first Fourier-Jacobi coefficient of the scalar-valued 
form $J_8$. 

Recall that the Schottky form $J_8$ can be expressed as follows:
\[
J_8=(R_{00}^2+R_{01}^2+R_{10}^2-2(R_{00}R_{01}+R_{00}R_{10}+R_{01}R_{10}))/2^{16}\, ;
\]
here we write
\[
R_{\mu \nu}(\tau)=\prod_{\alpha,\, \beta,\, \gamma\in \left\{0,1\right\}} 
\vartheta_
{
\left[
\begin{smallmatrix}
\mu & 0 & 0 & 0 \\
\nu  & \alpha & \beta & \gamma
\end{smallmatrix}
\right]
}
(\tau),
\quad
(\tau \in  \mathcal{H}_4)
\]
where we use the theta series with characteristics
$
\left[
\begin{smallmatrix}
a \\
b
\end{smallmatrix}
\right]
$
with $a$ and $b$  row vectors of size $g\in \ZZ_{\geq 1}$ 
with coordinates in $\ZZ$:
\[
\vartheta_{
\left[
\begin{smallmatrix}
a \\
b
\end{smallmatrix}
\right]
}
(\tau,z)=
\sum_{l\in \ZZ^g}
e^{\pi i (l+a/2) \tau (l+a/2)^t}
e^{2 \pi i (l+a/2)\cdot (z+b/2)^t},
\quad 
\tau \in \mathcal{H}_g,
\quad 
z=(z_1,\ldots,z_g)\in \CC^g.
\]
When $z=0$, we denote 
$
\vartheta_{
\left[
\begin{smallmatrix}
a \\
b
\end{smallmatrix}
\right]
}
(\tau,0)
$
simply by
$
\vartheta_{
\left[
\begin{smallmatrix}
a \\
b
\end{smallmatrix}
\right]
}
(\tau).
$
We thus need to calculate the first Fourier-Jacobi coefficient of
these theta functions. For any $\tau=(\tau_{ij}) \in \mathfrak{H}_4$
we write
$$
\tau= \left(\begin{matrix} \tau' & z^t \\ z & \tau_4\\ \end{matrix}\right)
\quad 
\text{where\, }
\tau' \in \mathfrak{H}_3 \quad 
\text{and\, }
z=(\tau_{14},\tau_{24},\tau_{34})
$$
and we abbreviate $\tau_{ii}$ as $\tau_i$.
We write the Fourier-Jacobi expansion of
$
\vartheta_
{
\left[
\begin{smallmatrix}
\mu & 0 & 0 & 0 \\
\nu  & \alpha & \beta & \gamma
\end{smallmatrix}
\right]
}
$
as
\[
\vartheta_{
\left[
\begin{smallmatrix}
\mu & 0 & 0 & 0 \\
\nu  & \alpha & \beta & \gamma
\end{smallmatrix}
\right]
}
(\tau)=
\sum_{l\in \ZZ}
(-1)^{l\, \gamma}
\vartheta_{
\left[
\begin{smallmatrix}
\mu & 0 & 0 \\
\nu  & \alpha & \beta 
\end{smallmatrix}
\right]
}
(\tau',l\, z) 
e^{\pi i \, l^2 \tau_4}.
\eqno(5)
\]
So we get
$$
\vartheta_{
\left[
\begin{smallmatrix}
\mu & 0 & 0 & 0 \\
\nu  & \alpha & \beta & \gamma
\end{smallmatrix}
\right]
}
(\tau)=
\vartheta_{
\left[
\begin{smallmatrix}
\mu & 0 & 0 \\
\nu  & \alpha & \beta
\end{smallmatrix}
\right]
}
(\tau')+
(-1)^{\gamma}
(
\vartheta_{
\left[
\begin{smallmatrix}
\mu & 0 & 0 \\
\nu  & \alpha & \beta
\end{smallmatrix}
\right]
}
(\tau',z)+
\vartheta_{
\left[
\begin{smallmatrix}
\mu & 0 & 0 \\
\nu  & \alpha & \beta
\end{smallmatrix}
\right]
}
(\tau',-z) 
)\, 
q_4^{1/2}+O(q_4^2) \, ,
$$
where $q_4=e^{2\pi i \tau_4}$. 
These first terms correspond to $l=0$ and $l=\pm 1$ in
formula (5) and the next term is given by $l=\pm 2$ 
which gives rise to $O(q_4^2)$.
Since we are dealing with even characteristics, the latter 
formula simplifies to
\[
\vartheta_{
\left[
\begin{smallmatrix}
\mu & 0 & 0 & 0 \\
\nu  & \alpha & \beta & \gamma
\end{smallmatrix}
\right]
}
(\tau)=
\vartheta_{
\left[
\begin{smallmatrix}
\mu & 0 & 0 \\
\nu  & \alpha & \beta
\end{smallmatrix}
\right]
}
(\tau')+
2\,(-1)^{\gamma}
\vartheta_{
\left[
\begin{smallmatrix}
\mu & 0 & 0 \\
\nu  & \alpha & \beta
\end{smallmatrix}
\right]
}
(\tau',z)\, 
q_4^{1/2}+O(q_4^2)\, . \eqno(5')
\]
We introduce for $\tau \in \mathfrak{H}_3$ and  $z \in {\CC}^3$
the notation
\[
r_{\mu \nu}(\tau)=\prod_{\alpha,\, \beta \in \left\{0,1\right\}} 
\vartheta_
{
\left[
\begin{smallmatrix}
\mu & 0 & 0 \\
\nu  & \alpha & \beta
\end{smallmatrix}
\right]
}
(\tau),
\qquad
s_{\mu \nu}(\tau,z)=\sum_{\alpha,\, \beta \in \left\{0,1\right\}} 
\left(\frac
{
\vartheta_
{
\left[
\begin{smallmatrix}
\mu & 0 & 0 \\
\nu  & \alpha & \beta
\end{smallmatrix}
\right]
}
(\tau,z)
}
{
\vartheta_
{
\left[
\begin{smallmatrix}
\mu & 0 & 0 \\
\nu  & \alpha & \beta
\end{smallmatrix}
\right]
}
(\tau)
}\right)^2
\quad
 \, .
\]
By using formulas $(5)$ and $(5')$ and this notation we get the following 
result.
\begin{lemma}
The Fourier-Jacobi expansion of $R_{\mu \nu}$ starts with
\[
R_{\mu \nu}(\tau)=
r_{\mu \nu}(\tau')^2
(
1-4\, s_{\mu \nu}(\tau',z)\, q_4
)+O(q_4^2).
\]
\end{lemma}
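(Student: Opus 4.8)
The plan is to reduce the product over the eight degree-four theta factors to a product over the four pairs indexed by $(\alpha,\beta)\in\{0,1\}^2$, exploiting the sign $(-1)^\gamma$ in formula $(5')$. First I would abbreviate the degree-four factor $\vartheta_{\left[\begin{smallmatrix}\mu&0&0&0\\\nu&\alpha&\beta&\gamma\end{smallmatrix}\right]}(\tau)$ and its two degree-three specializations, writing $(5')$ as $\theta_{\alpha\beta\gamma}=\theta_{\alpha\beta}+2(-1)^\gamma\theta_{\alpha\beta}(z)\,q_4^{1/2}+O(q_4^2)$, where $\theta_{\alpha\beta}=\vartheta_{\left[\begin{smallmatrix}\mu&0&0\\\nu&\alpha&\beta\end{smallmatrix}\right]}(\tau')$ and $\theta_{\alpha\beta}(z)=\vartheta_{\left[\begin{smallmatrix}\mu&0&0\\\nu&\alpha&\beta\end{smallmatrix}\right]}(\tau',z)$. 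Since products commute, $R_{\mu\nu}=\prod_{\alpha,\beta,\gamma}\theta_{\alpha\beta\gamma}=\prod_{\alpha,\beta}\bigl(\theta_{\alpha\beta0}\,\theta_{\alpha\beta1}\bigr)$.

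The crucial step is to multiply the two factors with $\gamma=0$ and $\gamma=1$ for fixed $(\alpha,\beta)$. Because these differ only by the sign in front of $q_4^{1/2}$, their product is a difference of squares,
\[
\theta_{\alpha\beta0}\,\theta_{\alpha\beta1}=\theta_{\alpha\beta}^2-4\,\theta_{\alpha\beta}(z)^2\,q_4+O(q_4^2),
\]
and the half-integral power $q_4^{1/2}$ cancels. This is the key observation: after pairing, $R_{\mu\nu}$ is an honest power series in integral powers of $q_4$, and because formula $(5')$ has no $q_4^1$ term in a single factor, every order-$q_4$ contribution arises from exactly one such difference of squares.

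It then remains to take the product over the four pairs and keep terms up to $q_4^1$. The constant term is $\prod_{\alpha,\beta}\theta_{\alpha\beta}^2=r_{\mu\nu}(\tau')^2$ by definition of $r_{\mu\nu}$, and the linear term is obtained by the usual product rule: factoring out $r_{\mu\nu}(\tau')^2$, each pair contributes $-4\bigl(\theta_{\alpha\beta}(z)/\theta_{\alpha\beta}\bigr)^2\,q_4$, and the sum over $(\alpha,\beta)$ is exactly $-4\,s_{\mu\nu}(\tau',z)\,q_4$ by definition of $s_{\mu\nu}$. This yields $R_{\mu\nu}(\tau)=r_{\mu\nu}(\tau')^2\bigl(1-4\,s_{\mu\nu}(\tau',z)\,q_4\bigr)+O(q_4^2)$, as claimed.

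I do not expect a genuine obstacle here; the argument is a short computation once $(5')$ is available. The only point requiring a little care is the bookkeeping of the error terms: I would verify that no $q_4^{1/2}$ or $q_4^{3/2}$ terms survive (they cannot, since within each pair the $q_4^{1/2}$ contributions cancel and the factors from $(5')$ skip the powers $q_4^1$ and $q_4^{3/2}$), and that the quotients $\theta_{\alpha\beta}(z)/\theta_{\alpha\beta}$ entering $s_{\mu\nu}$ are harmless. Indeed $r_{\mu\nu}^2\,s_{\mu\nu}$ is manifestly polynomial in the theta values, since each quotient is cleared by the remaining factors of $r_{\mu\nu}^2$, so the final identity holds as an identity of Fourier-Jacobi expansions with no nonvanishing hypothesis on the individual $\theta_{\alpha\beta}$.
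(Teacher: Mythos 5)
Your proof is correct and is essentially the computation the paper leaves implicit (the lemma is stated there with only the remark that it follows from formulas $(5)$ and $(5')$): pair the $\gamma=0$ and $\gamma=1$ factors into differences of squares so the $q_4^{1/2}$ terms cancel, then expand the product of the four pairs to first order in $q_4$ and identify the result with $r_{\mu\nu}^2$ and $s_{\mu\nu}$. Your extra bookkeeping of the half-integral powers (the single-factor errors sit at exponents $l^2/2\ge 2$, so no $q_4^{1}$ or $q_4^{3/2}$ terms survive) and the observation that $r_{\mu\nu}^2 s_{\mu\nu}$ is polynomial in the theta values are exactly the right points to check.
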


Since we are dealing with explicit theta series 
this allows us to compute the first Fourier-Jacobi coefficient of $J_8\,$.
We write the Fourier-Jacobi development as 
$J_8(\tau)=\sum_{m=0}^{\infty}
\varphi_{8,m}(\tau^{\prime},z) e^{2\pi i m \tau_4}$.
\begin{proposition}
The first nonzero Fourier-Jacobi coefficient of the Schottky form 
$J_8$ 
is given by
\[
\varphi_{8,1}(\tau',z)=\frac{1}{2^{12}}\, (r_{00}r_{01}r_{10})\,
(
-r_{00}s_{00}+
r_{01}s_{01}+
r_{10}s_{10}
)(\tau',z).
\]
\end{proposition}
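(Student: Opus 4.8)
The plan is to substitute the Fourier--Jacobi expansion of each $R_{\mu\nu}$ furnished by the preceding Lemma into the defining expression
\[
2^{16}J_8 = R_{00}^2+R_{01}^2+R_{10}^2-2(R_{00}R_{01}+R_{00}R_{10}+R_{01}R_{10}),
\]
expand everything to first order in $q_4=e^{2\pi i\tau_4}$, and read off the coefficients of $q_4^0$ and $q_4^1$. Writing $R_{\mu\nu}=r_{\mu\nu}^2(1-4s_{\mu\nu}q_4)+O(q_4^2)$, a product $R_{\mu\nu}R_{\mu'\nu'}$ contributes $r_{\mu\nu}^2r_{\mu'\nu'}^2$ at order $q_4^0$ and $-4\,r_{\mu\nu}^2r_{\mu'\nu'}^2(s_{\mu\nu}+s_{\mu'\nu'})$ at order $q_4^1$, while a square $R_{\mu\nu}^2$ contributes $r_{\mu\nu}^4$ and $-8\,s_{\mu\nu}r_{\mu\nu}^4$ respectively; since only the coefficient of $q_4^1$ is needed, the $O(q_4^2)$ remainders play no role.

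First I would treat the constant term. Collecting the $q_4^0$ contributions gives
\[
2^{16}\varphi_{8,0}=r_{00}^4+r_{01}^4+r_{10}^4-2\bigl(r_{00}^2r_{01}^2+r_{00}^2r_{10}^2+r_{01}^2r_{10}^2\bigr).
\]
Its zeroth Fourier--Jacobi coefficient is the image of $J_8$ under the Siegel operator, which vanishes since $J_8$ is a cusp form; hence $\varphi_{8,0}=0$ identically and $r_{00}^2,r_{01}^2,r_{10}^2$ satisfy $P(X,Y,Z)=0$ with $P(X,Y,Z)=X^2+Y^2+Z^2-2(XY+XZ+YZ)$. Writing $\sqrt X=r_{00}$ (positive branch) etc., one has $P=-(r_{00}+r_{01}+r_{10})(-r_{00}+r_{01}+r_{10})(r_{00}-r_{01}+r_{10})(r_{00}+r_{01}-r_{10})$, so exactly one factor vanishes identically. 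Comparing low-order Fourier coefficients (the even theta products $r_{00}$ and $r_{01}$ have constant term $1$, while $r_{10}$ begins at a positive power of $q_1$, leaving $-r_{00}+r_{01}+r_{10}$ and $r_{00}-r_{01}+r_{10}$ as the two candidates) singles out the relation
\[
r_{00}=r_{01}+r_{10};
\]
this is the classical Riemann theta relation in genus three underlying the modularity of the Schottky form, and I would simply cite it if that is cleaner than the Fourier comparison.

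Next I would assemble the coefficient of $q_4^1$. Grouping the linear contributions according to $s_{00},s_{01},s_{10}$ yields
\[
2^{16}\varphi_{8,1}=-8\bigl[r_{00}^2(r_{00}^2-r_{01}^2-r_{10}^2)s_{00}+r_{01}^2(r_{01}^2-r_{00}^2-r_{10}^2)s_{01}+r_{10}^2(r_{10}^2-r_{00}^2-r_{01}^2)s_{10}\bigr].
\]
Inserting $r_{00}=r_{01}+r_{10}$ turns the three quadratic factors into $r_{00}^2-r_{01}^2-r_{10}^2=2r_{01}r_{10}$, $r_{01}^2-r_{00}^2-r_{10}^2=-2r_{10}r_{00}$, and $r_{10}^2-r_{00}^2-r_{01}^2=-2r_{01}r_{00}$. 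Factoring $r_{00}r_{01}r_{10}$ out of the bracket then gives
\[
2^{16}\varphi_{8,1}=16\,r_{00}r_{01}r_{10}\,(-r_{00}s_{00}+r_{01}s_{01}+r_{10}s_{10}),
\]
which is the asserted formula after division by $2^{16}$.

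The main obstacle is the determination of the correct linear theta relation, i.e.\ which factor of $P$ vanishes. This sign is precisely what produces the asymmetric pattern $-r_{00}s_{00}+r_{01}s_{01}+r_{10}s_{10}$ in the answer: the competing choice $r_{01}=r_{00}+r_{10}$ would instead yield $r_{00}s_{00}-r_{01}s_{01}+r_{10}s_{10}$, so the structure of the target both requires and confirms $r_{00}=r_{01}+r_{10}$. Everything else is a routine first-order expansion using the preceding Lemma, with the $O(q_4^2)$ terms harmless for the coefficient of $q_4$.
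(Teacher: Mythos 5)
Your proposal is correct and follows essentially the same route as the paper: substitute the expansions $R_{\mu\nu}=r_{\mu\nu}^2(1-4\,s_{\mu\nu}q_4)+O(q_4^2)$ from the preceding lemma, use cuspidality of $J_8$ to get $\varphi_{8,0}=0$, and simplify the coefficient of $q_4$ via $r_{00}=r_{01}+r_{10}$, arriving at exactly the paper's intermediate expression $16\,r_{00}r_{01}r_{10}(-r_{00}s_{00}+r_{01}s_{01}+r_{10}s_{10})$ before dividing by $2^{16}$. The only (harmless) deviation is that the paper cites the theta relation $r_{00}-r_{01}-r_{10}=0$ as known and invokes $S_8(\Gamma_3)=(0)$ for the vanishing of $\varphi_{8,0}$, whereas you deduce $\varphi_{8,0}=0$ from the Siegel operator and then derive the relation from the factorization together with a leading-Fourier-coefficient comparison, which is a valid self-contained variant.
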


\noindent
\emph{Proof.}
We have
\begin{align*}
J_8(\tau)=
\frac{1}{2^{16}}(\varphi_{8,0}(\tau')+\varphi_{8,1}(\tau',z)\, q_4+O(q_4^2)).
\end{align*}
The Fourier-Jacobi coefficient $\varphi_{8,0}$ is a modular form of weight 8 on $\Gamma_3$
but, since $J_8$ is a cusp form, we have 
$\varphi_{8,0}\in S_8(\Gamma_3)$ and this space is $(0)$, so
$\varphi_{8,0}=0$. Computing its expression in terms of the 
$r_{\mu \nu}$ gives
\begin{align*}
\varphi_{8,0}(\tau')&=
(r_{00}^4+r_{01}^4+r_{10}^4-2(r_{00}^2r_{01}^2+r_{00}^2r_{10}^2+r_{01}^2r_{10}^2))(\tau')\\
&=(r_{00}-r_{01}-r_{10})(r_{00}+r_{01}+r_{10})(r_{00}+r_{10}-r_{01})(r_{00}+r_{01}-r_{10})(\tau')\\
&=0
\end{align*}
since we have the following relation
\[
r_{00}-r_{01}-r_{10}=0. \eqno(6)
\]
The Fourier-Jacobi coefficient $\varphi_{8,1}$ is a Jacobi cusp form of weight 8 and index 1
on $\Gamma_3$ and we have
\begin{align*}
\varphi_{8,1}&=8
(-
r_{00}^2(r_{00}^2-r_{01}^2-r_{10}^2)s_{00}+
r_{01}^2(r_{00}^2-r_{01}^2+r_{10}^2)s_{01}+
r_{10}^2(r_{00}^2+r_{01}^2-r_{10}^2)s_{10})\\
&=16\,
r_{00}r_{01}r_{10}(-r_{00}s_{00}+
r_{01}s_{01}+
r_{10}s_{10})
\end{align*}
since the relation (6) among the $r_{\mu\nu}$ implies
$$
r_{00}^2-r_{01}^2-r_{10}^2=2\, r_{01}r_{10}\,, \qquad
r_{00}^2-r_{01}^2+r_{10}^2=2\, r_{00}r_{10}\,, \qquad
r_{00}^2+r_{01}^2-r_{10}^2=2\, r_{00}r_{01}\,. \hfill \qed	
$$

Recall now that we write $\chi_{4,0,8}$ as $\sum n_I \alpha_I \, z^I$
where the sum is over the fifteen lexicographically ordered 
multi-indices of the monomials of
degree $4$ in the three variables $z_1,z_2,z_3$ and each $\alpha_I$
is a holomorphic function on $\mathfrak{H}_3$. 

\begin{proposition}
The coefficient $n_I \alpha_I$ for a multi-index $I=(i_1,i_2,i_3)$ 
with $i_1+i_2+i_3=4$
of the cusp form $\chi_{4,0,8}$ of weight 
$(4,0,8)$ on $\Gamma_3$ 
is given by
\[
\alpha_I= \frac{1}{2^{19}\, 3} r_{00}r_{01}r_{10}(\tau) 
\left(
-r_{00}(\tau)\frac{\partial^4}{\partial_I} s_{00}(\tau,0)
+r_{01}(\tau)\frac{\partial^4}{\partial_I} s_{01}(\tau,0)
+r_{10}(\tau)\frac{\partial^4}{\partial_I} s_{10}(\tau,0)
\right)\, ,
\]
where
${\partial^4}/{\partial_I}$ stands for $\partial^4/
\partial {\tau}_{14}^{i_1} \partial {\tau}_{24}^{i_2} 
\partial {\tau}_{34}^{i_3}\,$
and $n_I= 24/{i_1! \, i_2! \, i_3!}\,$.
\end{proposition}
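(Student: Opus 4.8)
The plan is to recover $\chi_{4,0,8}$ as the first nonvanishing homogeneous Taylor coefficient, in the variable $z=(\tau_{14},\tau_{24},\tau_{34})$, of the index-one Jacobi form $\varphi_{8,1}$ computed in the previous Proposition, and then to read off each coordinate $\alpha_I$ by differentiation. The two inputs are the explicit formula for $\varphi_{8,1}$ just obtained and the fact recalled in \S\ref{chi408}, that the lowest-order term of the development of $J_8$ along $\mathfrak{H}_1\times\mathfrak{H}_3$ is a multiple of $\Delta\otimes\chi_{4,0,8}$.

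First I would dispose of the low-order coefficients. Since the even theta series are even in $z$, each $s_{\mu\nu}(\tau',z)$ is even in $z$, hence so is $\varphi_{8,1}$, so its odd-degree Taylor coefficients (degrees $1$ and $3$) vanish. At $z=0$ one has $s_{\mu\nu}(\tau',0)=4$, whence $\varphi_{8,1}(\tau',0)=\tfrac{4}{2^{12}}\,r_{00}r_{01}r_{10}(-r_{00}+r_{01}+r_{10})=0$ by relation~(6); the degree-$0$ coefficient vanishes too. By the standard theory of Taylor developments of index-one Jacobi forms, the first nonvanishing homogeneous coefficient in $z$, say in degree $d$, is a genuine vector-valued cusp form of weight $(d,0,8)$. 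Granting the vanishing in degrees $0$, $1$ and $3$, the degree-$2$ coefficient is the only candidate below degree $4$; were it nonzero it would be the first nonvanishing coefficient, hence a nonzero cusp form in $S_{2,0,8}$. But $i+2j+3k=26$ lies below the minimum value $28$ recorded in \S\ref{chi408}, so $S_{2,0,8}=(0)$, and the degree-$2$ coefficient vanishes as well.

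Thus the degree-$4$ Taylor coefficient is the first nonvanishing one; it is a cusp form of weight $(4,0,8)$, and since $\dim S_{4,0,8}=1$ it is a scalar multiple of $\chi_{4,0,8}$, in agreement with the leading term $\Delta\otimes\chi_{4,0,8}$ of \S\ref{chi408} (the factor $\Delta=q_4+O(q_4^2)$ accounting for the passage from $J_8$ to its coefficient $\varphi_{8,1}$). It then remains to compute. As $z_j=\tau_{j4}$ we have $\partial/\partial z_j=\partial/\partial\tau_{j4}$, so $\partial^4/\partial_I$ extracts, up to the factor $i_1!\,i_2!\,i_3!$, the coefficient of $z^I$ in the degree-$4$ part of $\varphi_{8,1}$; since the $r_{\mu\nu}$ are independent of $z$, only the $s_{\mu\nu}$ are differentiated, and this coefficient equals
\[
\frac{1}{i_1!\,i_2!\,i_3!\,2^{12}}\,r_{00}r_{01}r_{10}\Bigl(-r_{00}\frac{\partial^4}{\partial_I}s_{00}+r_{01}\frac{\partial^4}{\partial_I}s_{01}+r_{10}\frac{\partial^4}{\partial_I}s_{10}\Bigr)(\tau',0).
\]
Equating this with the coordinate $n_I\alpha_I=\tfrac{24}{i_1!\,i_2!\,i_3!}\,\alpha_I$ of the same scalar multiple of $\chi_{4,0,8}$ determines $\alpha_I$ up to one global constant; fixing that constant by matching a single Fourier coefficient (e.g.\ that of $q_1q_2q_3$) with the normalized expansion of \S\ref{chi408}, and combining it with the factors $2^{12}$ from $\varphi_{8,1}$ and $24=2^3\cdot 3$ from $n_I$, yields the denominator $2^{19}\cdot 3$ of the stated formula.

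The main obstacle is the modularity of the degree-$4$ coefficient, i.e.\ the bootstrap that the coefficients of degrees $0,1,2,3$ all vanish — parity killing the odd ones and the weight bound $i+2j+3k<28$ killing the even ones — so that the principle that the first nonvanishing Taylor coefficient of an index-one Jacobi form is modular applies in degree $4$. The remaining work is the careful bookkeeping of the normalizing constants.
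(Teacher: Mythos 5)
Your proposal is correct, and its computational core coincides with what the paper does implicitly (the paper states this proposition without a written proof, treating it as immediate from the formula for $\varphi_{8,1}$ and the normalization of $\chi_{4,0,8}$ fixed in \S4): differentiate $\varphi_{8,1}$ four times in $z=(\tau_{14},\tau_{24},\tau_{34})$ at $z=0$, note that only the $s_{\mu\nu}$ depend on $z$, and match the multinomial and normalization factors. The one genuine difference is how the degree-$4$ Taylor coefficient gets identified with a multiple of $\chi_{4,0,8}$: the paper simply quotes \cite{C-vdG} for the fact that the lowest-order term of the normal development of $J_8$ along $\mathfrak{H}_3\times\mathfrak{H}_1$ is the degree-$4$ term $\Delta\otimes\chi_{4,0,8}$, whereas you re-derive this from scratch via the Taylor bootstrap for index-one Jacobi forms: the degree-$0$ coefficient vanishes by $s_{\mu\nu}(\tau',0)=4$ and relation~(6), the odd degrees vanish by evenness of the $s_{\mu\nu}$ in $z$, and the degree-$2$ coefficient, being then a genuine cusp form of weight $(2,0,8)$ with $i+2j+3k=26<28$, vanishes by Ta\"ibi's dimension data --- so the degree-$4$ coefficient transforms honestly in $\Sym^4\otimes\det^8$. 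This buys a self-contained proof at the cost of a little length; note that your phrase ``the first nonvanishing one'' is, at that point of the argument, only justified up to the possibility that the degree-$4$ coefficient is itself zero, but the final matching of a single Fourier coefficient (e.g.\ of $q_1q_2q_3$) against the nonzero normalized expansion of \S4 settles nonvanishing and pins the global constant (the factor $16=2^4$ turning $24\cdot 2^{12}=2^{15}\cdot3$ into $2^{19}\cdot3$) in one stroke, exactly as the paper's normalization requires.
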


Using Lemma~\ref{actionS3} and the remarks thereafter we see
that we need to calculate the coefficients for the multi-indices
$I=(1,1,1,1)$, $(1,1,1,2)$,
$(1,1,2,2)$  and $(1,1,2,3)$ 
only.

As a check on our calculations we use the fact that the Hecke 
eigenvalues of $\chi_{4,0,8}$ for a prime $p$ are 
by the results of~\cite{BFvdG2} predicted to be 
$$
\lambda_p= \tau(p) \, (p^5+\tau(p)+p^6)
$$
with $\Delta=\sum \tau(n)q^n \in S_{12}(\Gamma_1)$. By applying the 
explicit formulas for the Hecke operators for $\Gamma_3$ 
given in \cite{C-vdG}
we can calculate the Fourier coefficients $a(n)$ and hence the
eigenvalues for small $p$; for example with the notation used
in~\cite[p.~45]{C-vdG}
we have for $n_0=[1\, 1\, 1; 0\, 0\, 0]$
and $p=2$
$$
\begin{aligned}
a_2(n_0)= a(2\, n_0)+ 2^{15}\Sy^{-4}
\left(
\begin{smallmatrix}
1 & 0 & 1  \\
0 & 1 & 1  \\
0 & 0 & 2 
\end{smallmatrix}
\right)
a([1\, 1\, 2\,; 1\, 2\, 2]) + 2^{9}(\Sy^{-4}
\left(
\begin{smallmatrix}
1 & 0 & 1  \\
0 & 2 & 0  \\
0 & 0 & 2 
\end{smallmatrix}
\right)
a([1\, 2\, 2\,; 2\, 0\, 0]) & \\
{}+\Sy^{-4}
\left(
\begin{smallmatrix}
0 & 1 & 1  \\
2 & 0 & 0  \\
0 & 0 & 2 
\end{smallmatrix}
\right)
a([1\, 2\, 2\,; 0\, 2\, 0])) &.
\end{aligned}
$$
The Fourier coefficients needed are
the initial one
$$
\begin{aligned}
a(n_0)=a([1\, 1\, 1\,; 0\, 0\, 0])&=[0, 0, 0, 4, 0, 4, 0, 0, 0, 0, 0, 0, 4, 0, 0]^t\\
\end{aligned}
$$
and  
$$\begin{aligned}
a([2\, 2\, 2\,; 0\, 0\, 0])&=[-512, 0, 0, -2816, 0, -2816, 0, 0, 0, 0, -512, 0, -2816, 0, -512]^t \\
a([1\, 1\, 2\,; 1\, 2\, 2])&=[0, 0, 0, 0, 1, 1, 0, 1, 3, 2, 0, 0, 1, 2, 1]^t \\
a([1\, 2\, 2\,; 0\, 2\, 0])&=[0, 0, 0, -24, 0, -48, 0, -48, 0, -96, 48, 0, -48, 0, -48]^t \\
a([1\, 2\, 2\,; 2\, 0\, 0])&=[0, 0, 0, -48, 0, -24, -96, 0, -48, 0, -48, 0, -48, 0, 48]^t;
\end{aligned}
$$
this gives $\lambda_2(\chi_{4,0,8})=-1728=-24\, (2^5-24+2^6)$ as expected.
We similarly checked the agreement for
$\lambda_3$ and $\lambda_5\,$.
\end{section}

\begin{section}{Another expression for $\chi_{4,0,8}$}
In this section we derive another expression for $\chi_{4,0,8}$ that
connects it to the geometry of curves of genus three.
For a principally polarized abelian variety $(X,\Theta)$ of dimension~$g$
we consider the space
$$
\Gamma_{00}(X,\Theta):=\{ s \in H^0(X, {\mathcal O}(2\Theta)):
m_0(s) \geq 4\} \, ,
$$
where $m_0$ means the order of vanishing at the origin, see \cite{vGvdG}.
For a principally polarized abelian variety $(X, \Theta)$ that is not
decomposable (that is, not a product of nontrivial
lower-dimensional principally polarized abelian subvarieties)
it is known that
$$
\dim \Gamma_{00}(X, \Theta)=2^g-\frac{g(g+1)}{2}-1 \, .
$$
If for  $\sigma \in ({\ZZ}/2{\ZZ})^g$
we define the second order theta function
$\Theta_{\sigma}(\tau,z)$ on $\mathfrak{H}_g \times {\CC}^g$ by
$$
\Theta_{\sigma}(\tau,z)=
\vartheta_{[\begin{smallmatrix} \sigma \\ 0 \\ \end{smallmatrix}]}(2\, \tau,2\, z)
$$
then for fixed $\tau$ the
$\Theta_{\sigma}(\tau,z)$ with $\sigma \in ({\ZZ}/2{\ZZ})^g$
give a basis of
$H^0(X_{\tau},{\mathcal O}(2\Theta))$ with $X_{\tau}={\CC}^g/\Lambda_{\tau}$
the abelian variety
associated to $\tau$.
Now $s=\sum_{\sigma} a_{\sigma} \Theta_{\sigma}(\tau,z)$
belongs to $\Gamma_{00}$ if and only if
$$
\sum_{\sigma} a_{\sigma} \Theta_{\sigma}(\tau,0)=0,
\quad
\sum_{\sigma} a_{\sigma} \frac{\partial^2 \Theta_{\sigma}}
{\partial z_i \partial z_j}(\tau,0)=0 \qquad
\text{\rm for all $1\leq i, j \leq g$}.
$$
For an indecomposable principally polarized abelian threefold
the space $\Gamma_{00}$ has dimension~$1$.
In \cite{Frobenius} Frobenius constructed a non-zero element of  $\Gamma_{00}$
by defining a function $\varphi(\tau,z)$ on $\mathfrak{H}_3\times {\CC}^3$.
A somewhat different construction, leading to the same result,
is as follows. Let
$\Gamma_3[2,4]$ be the usual congruence subgroup of $\Gamma_3$ of level $(2,4)$.

The theta functions of the second order $\Theta_{\sigma}(\tau,0)$ on
$\mathfrak{H}_3$
define a morphism
$$
{\rm Th} \colon {A}_3[2,4]=\Gamma_3[2,4]\backslash \mathfrak{H}_3 \to {\PP}^7\, ,
$$
which is an injective map and an immersion
along the locus $A^0_3[2,4]$ of indecomposable abelian varieties,
see \cite[Theorem~3.2]{Sasaki}.
The closure of the image is a hypersurface given by an
explicit equation $F(\ldots,x_{\sigma},\ldots)=0$ of degree $16$
in the coordinates $x_{\sigma}$ of ${\PP}^7$, see \cite[p.\ 632]{vGvdG}.
Then the expression
$$
\phi(\tau,z) := \sum_{\sigma} \frac{\partial F}{\partial x_{\sigma}}
(\ldots, \Theta_{\sigma}(\tau,0),
\ldots) \, \Theta_{\sigma}(\tau,z)
$$
equals up to a non-zero multiplicative constant
the function constructed by Frobenius, see \cite[p.\ 624]{vGvdG}. In fact,
the functions
$\varphi(\tau,z)$ and $\phi(\tau,z)$ differ multiplicatively by a function
that depends only on $\tau$ and  is invariant under $\Gamma_3[2,4]$
and descends to a holomorphic function on $A^0_3[2,4]$
and thus is constant since the locus of decomposable abelian threefolds has codimension $2$.
For a $\tau$ that corresponds to the periods of a smooth projective nonhyperelliptic
curve $Y$ the first term of $\phi$ in the Taylor expansion
as a function in $z_1,z_2,z_3$
gives the quartic polynomial that defines the canonical image of $Y$ as Frobenius
showed \cite[p.\ 37]{Frobenius}.
In fact, the zero locus of $\phi$ on the abelian variety
${\rm Jac}(Y)={\CC}^3/\Lambda_{\tau}$ is the surface $\{(x-y): x, y \in Y\}$.
Note that for $g=3$ the divisor $\{ (x-y)\in {\rm Jac}(Y): x,y \in Y\}$
belongs to $|2\Theta|$.
This surface is singular since under the map
$Y\times Y \to {\rm Jac}(Y)$ given by $(x,y) \mapsto (x-y)$
the diagonal is contracted and the tangent cone
to the singularity is the cone over the canonical image of $Y$, see \cite{vGvdG}.
For yet another description of $\phi(\tau,z)$ we refer to \cite[Proposition~1]{G-SM}.

It follows from the properties of the theta functions $\Theta_{\sigma}$
and the fact that $\phi$ vanishes to order $4$ along the zero section
in the universal abelian threefold $\X{3}$ over $\A{3}$
that the degree $4$ term in the normal expansion of $\phi$ along the
zero section 
defines a section of
$\Sym^4({\EE})\otimes \det^8({\EE})$.

\begin{proposition}\label{Frobeniusphi0}
The function $\phi$ defines a holomorphic Siegel modular cusp form of weight $(4,0,8)$
and for a nonhyperelliptic Jacobian ${\rm Jac}(Y)$
the naturally associated vector of length $15$ gives the
coefficients of the quartic defining the canonical image of $Y$.
\end{proposition}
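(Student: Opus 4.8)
The plan is to separate the statement into three tasks: the transformation behaviour of $\phi$ under $\Gamma_3$, the extraction of its degree-$4$ normal term as a form of the asserted weight, and the identification of that term with the canonical quartic.

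First I would record the transformation law of the second order theta functions $\Theta_\sigma(\tau,z)$ under $\Gamma_3$. Each $\Theta_\sigma$ transforms with weight $\tfrac12$, i.e. with a scalar factor $\mu(\gamma,\tau)$ equal to $\det(c\tau+d)^{1/2}$ times a theta multiplier that is an eighth root of unity, together with a linear substitution $\Theta\bigl(\gamma\tau,{}^t(c\tau+d)^{-1}z\bigr)=\mu(\gamma,\tau)\,A(\gamma)\,\Theta(\tau,z)$, where $A(\gamma)$ is the finite matrix through which $\Gamma_3/\Gamma_3[2,4]$ acts on $\PP^7$. Since the theta map $\mathrm{Th}$ is equivariant with image the hypersurface $\{F=0\}$, the degree-$16$ form $F$ satisfies $F\circ A(\gamma)=c(\gamma)F$ for a scalar $c(\gamma)$; differentiating yields the contragredient law $\nabla F\circ A(\gamma)=c(\gamma)\,{}^tA(\gamma)^{-1}\nabla F$. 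Evaluating at $z=0$, using the degree-$15$ homogeneity of $\nabla F$ and pairing with $\Theta(\tau,z)$, the matrix $A(\gamma)$ cancels through $\langle {}^tA^{-1}u,A v\rangle=\langle u,v\rangle$, and one finds
$$
\phi\bigl(\gamma\tau,{}^t(c\tau+d)^{-1}z\bigr)=\mu(\gamma,\tau)^{16}\,c(\gamma)\,\phi(\tau,z).
$$
Because $\phi$ is assembled from sixteen theta-factors of weight $\tfrac12$, the eighth-root-of-unity multiplier dies in the sixteenth power, so $\mu^{16}=\det(c\tau+d)^8$. The residual scalar $c(\gamma)$ is $\tau$-independent, so $\gamma\mapsto c(\gamma)$ is a homomorphism $\Gamma_3\to\CC^{\times}$; since $\Gamma_3=\Sp{6}{\ZZ}$ is perfect, $c\equiv1$. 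This gives genuine $\Gamma_3$-modularity with trivial multiplier and isolates the scalar weight as $8$.

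Next I would extract the leading term in $z$. By the $\Gamma_{00}$-description recalled above, $\phi$ vanishes to order exactly $4$ along the zero section, so its Taylor (equivalently normal) expansion begins in degree $4$; write this term as $\sum_{|I|=4}\chi_I(\tau)\,z^I$, a holomorphic vector of length $15$ (the $\Theta$'s being holomorphic). Applying the transformation law to the homogeneous degree-$4$ part, the substitution $z\mapsto{}^t(c\tau+d)^{-1}z$ induces $\Sym^4$ on degree-$4$ polynomials, and combined with the scalar $\det(c\tau+d)^8$ the vector $(\chi_I)$ transforms by $\Sym^4(c\tau+d)\det(c\tau+d)^8$. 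Under the paper's convention $z_iz_j\leftrightarrow\tau_{ij}$ this is precisely the factor of automorphy $\rho(c\tau+d)$ with $\rho=\Sym^4\otimes\det^8$, i.e. weight $(4,0,8)$ (highest weight $(12,8,8)$), so $(\chi_I)$ is a holomorphic Siegel modular form of weight $(4,0,8)$; this is exactly the assertion of the paragraph preceding the proposition. For cuspidality and nonvanishing I would invoke that for a nonhyperelliptic Jacobian the leading term is nonzero (it is the canonical quartic; see the next step), together with $\dim S_{4,0,8}=1$: a nonzero holomorphic form of this weight is then forced to be $\chi_{4,0,8}$ up to a scalar, hence a cusp form. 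Alternatively one reads cuspidality directly off the Fourier expansion recorded in \S\ref{chi408}, which begins at $q_1q_2q_3$.

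Finally, the second assertion is precisely the theorem of Frobenius already recalled in the text: for $\tau$ the period matrix of a smooth nonhyperelliptic curve $Y$, the leading Taylor term of $\phi$ in $z_1,z_2,z_3$ is the quartic cutting out the canonical image of $Y$ in $\PP^2$. Since the length-$15$ vector $(\chi_I)$ is by construction the coefficient vector of this degree-$4$ term, it gives the coefficients of the defining quartic. I expect the first step to be the main obstacle: pinning down the exact scalar factor of automorphy, i.e. proving that the a priori level-$(2,4)$ construction descends to full $\Gamma_3$ with multiplier exactly $\det(c\tau+d)^8$. The homogeneity count supplies the $\det^8$ and perfectness of $\Sp{6}{\ZZ}$ kills the residual character, but care is needed in handling the theta multiplier and the equivariance of $F$.
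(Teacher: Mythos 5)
Your route is genuinely different from the paper's. The paper never computes an automorphy factor: it records, just before the proposition, that $\phi$ vanishes to order $4$ along the zero section of the universal abelian threefold $\X{3}\to\A{3}$, so that the degree-$4$ term of the normal expansion is a section of $\Sym^4(\EE)\otimes\det^8(\EE)$ over the locus of indecomposable abelian threefolds; exactness of the order $4$ at a Jacobian comes from the geometric fact that the zero divisor of $\phi$ on ${\rm Jac}(Y)$ is the difference surface $\{x-y\colon x,y\in Y\}$, whose tangent cone at the origin is the cone over the canonical quartic. The section then extends over all of $\A{3}$ because the complement $\A{2,1}$ has codimension two, and cuspidality follows from the vanishing of the form on the decomposable locus. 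Your explicit computation --- theta multiplier an eighth root of unity dying in the sixteenth power, $\mu^{16}=\det(c\tau+d)^8$ from the sixteen factors of weight $\tfrac12$, equivariance of the degree-$16$ equation $F$, and perfectness of $\Sp{6}{\ZZ}$ killing the residual character (valid since $g\ge3$) --- is a viable classical substitute for the paper's appeal to ``the properties of the theta functions,'' and it has the merit of pinning down the multiplier honestly. Two repairs are needed, though. First, the transformation law of $\Theta_\sigma(\tau,z)$ carries an exponential automorphy factor of the shape $\exp\bigl(2\pi i\,{}^t z(c\tau+d)^{-1}c\,z\bigr)$ which you omitted; since this factor is $1+O(|z|^2)$ and the expansion of $\phi$ starts in degree $4$, it does not disturb the degree-$4$ coefficient, but the law as you state it for the full function is incorrect and the harmlessness must be said. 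Second, ``order exactly $4$'' does not follow from the $\Gamma_{00}$-description, which only gives order $\ge4$; you do recover nonvanishing from Frobenius at the end, so this is a misattribution rather than a gap.

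The one genuine gap is your cuspidality step. From $\dim S_{4,0,8}=1$ alone you cannot conclude that a nonzero holomorphic form of weight $(4,0,8)$ is a cusp form: for that you need $M_{4,0,8}=S_{4,0,8}$, i.e.\ the absence of Eisenstein-type forms of this weight, which you neither state nor justify. Your fallback --- reading cuspidality off the Fourier expansion recorded in \S\ref{chi408} --- is circular, because identifying your form with $\chi_{4,0,8}$ is precisely what the one-dimensionality argument was supposed to deliver. The paper closes this cleanly and geometrically: $\phi$ vanishes identically for decomposable abelian threefolds (the image of $\A{2,1}$ under ${\rm Th}$ lies where all partials $\partial F/\partial x_\sigma$ vanish, ${\rm Th}$ being an immersion only along the indecomposable locus), so the extended form vanishes along $\A{2,1}$, whose closure in the Satake compactification contains the boundary; hence the Siegel $\Phi$-operator kills the form. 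You should replace the dimension count by this argument, or else verify $M_{4,0,8}=S_{4,0,8}$ explicitly from the cited tables.
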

\begin{proof}
Over the locus of indecomposable abelian threefolds the section $\phi$ vanishes
along the zero section of ${\mathcal X}_3\to \A{3}$ to order $4$.
Since its zero locus for a  smooth curve $Y$ is the surface $\{(x-y): x,y\in Y\}$
it vanishes with exact order $4$.
Therefore
the first term in the development is a non-zero section of
$\Sym^4({\EE}) \otimes \det^8({\EE})$ over this locus.
Since the complement has codimension two, the section extends
over all of $\A{3}$ 
to give a holomorphic modular form. It vanishes on the locus
of decomposable abelian varieties, hence it defines a cusp form.
\end{proof}
\end{section}

\begin{section}{Teichm\"uller Modular Forms}\label{TMF}
The Torelli map is a morphism $t=t_g\colon \M{g} \to \A{g}$
of Deligne-Mumford stacks obtained
by associating to a smooth projective curve of genus $g$ 
its Jacobian. It can be extended to a morphism from the Deligne-Mumford
compactification $\barM{g}$
to the toroidal compactification defined by the second Voronoi
fan~\cite{Namikawa}.
In the case of genus $3$ this compactification $\tilde{\mathcal A}_3$
is the standard compactification of~$\A{3}$
(cf.~\cite{DSHS}).

The moduli space $\barM{3}$ carries a vector bundle ${\EE}^{\prime}$
of rank $3$, the Hodge bundle. 
It is the pullback under the Torelli morphism of the Hodge bundle
${\EE}$ on $\tilde{\mathcal A}_3$.
Moreover, for each irreducible representation $\rho$ of ${\rm GL}(3)$
of highest weight $(\rho_1,\rho_2,\rho_3)$ we have the vector bundle 
${\EE}_{\rho}^{\prime}$ on $\barM{3}$, 
again pulled back from $\tilde{\mathcal A}_3$. 

A Teichm\"uller modular form of weight 
$w(\rho)=(\rho_1-\rho_2,\rho_2-\rho_3,\rho_3)$ and degree $3$
is a section of ${\EE}_{\rho}^{\prime}$ on
$\M{3}$. In Proposition~\ref{extensiontoM3bar} below, we show that
a Teichm\"uller modular form extends to $\barM{3}$.
We denote the space of Teichm\"uller modular forms of weight $w(\rho)$ 
by $T_{w(\rho)}$. If $w(\rho)=(0,0,k)$ we are dealing with scalar-valued
Teichm\"uller modular forms (of weight $k$). Scalar-valued 
Teichm\"uller modular forms form a ring $T=\oplus_k T_{0,0,k}$.

Ichikawa showed that there exists a Teichm\"uller
modular form $\chi_{9}$ of weight $(0,0,9)$ on $\barM{3}$.
It is a square root of the pullback of $\chi_{18}$.
In particular, $\chi_9$ vanishes on the hyperelliptic
locus, with class $h\sim 9\lambda-\delta_0-3\delta_1$. The divisor
of $\chi_9$ is therefore known:
$$
{\rm div}(\chi_9)=h+\delta_0+3\, \delta_1\, . \eqno(7)
$$
Ichikawa also showed that
the ring $T$ is generated over the ring $R$ of scalar-valued Siegel
modular forms of degree $3$ by $\chi_9$. 
See \cite{Ichikawa1,Ichikawa2,Ichikawa3,Ichikawa4}.

The map $\mathcal{M}_3 \to \mathcal{A}_3$ of stacks is a double
cover of its image, the locus of indecomposable abelian
threefolds. It is ramified along the hyperelliptic locus. 
The covering involution $\iota$ can be understood by using
a fine moduli space $\M3[N]$ of curves with level-$N$
structure ($N\ge3$): 
it sends the isomorphism class of a pair $(C,\alpha)$
of a curve $C$ and a level structure $\alpha$ to the class
of $(C,-\alpha)$. Note that $(A,\alpha)\cong (A,-\alpha)$
for a polarized abelian variety $A$.

Further, $\iota$ preserves the Hodge bundle $\EE'$ and acts
as $-1$ on the fiber. Hence the bundle $\EE_{\rho}'$
is preserved as well and the action on the fiber
is as $(-1)^{\rho_1-\rho_2+\rho_3}=(-1)^{\rho_1+\rho_2+\rho_3}$.

Call $\rho$ odd (resp.~even) when $\rho_1+\rho_2+\rho_3$
is odd (resp.~even). 
Let us also call a Teichm\"uller (or Siegel) modular form of weight $w(\rho)$
odd (resp.~even) when $\rho$ is odd (resp.~even).
It is clear that an odd Siegel modular form is identically zero
and that an odd Teichm\"uller modular form vanishes on the hyperelliptic locus.
Further, a Siegel modular form of weight $w(\rho)$ pulls back
to a Teichm\"uller modular form of the same weight.
Moreover, we have the following result.

\begin{lemma}
An even Teichm\"uller modular form
is the pullback of a Siegel modular form.
\end{lemma}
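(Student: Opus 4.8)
The plan is to descend $F$ along the Torelli double cover $t$ and then extend the resulting section across the decomposable locus. Throughout I use the facts already recorded above: that $t$ realizes $\M{3}$ as a double cover of the indecomposable locus $\A{3}-\A{2,1}$ with covering involution $\iota$, that $\EE^{\prime}_{\rho}=t^{*}\EE_{\rho}$, and that $\iota$ acts on the fibres of $\EE^{\prime}_{\rho}$ by the scalar $(-1)^{\rho_1+\rho_2+\rho_3}$.

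First I would check that an even form is $\iota$-invariant. Realizing $\M{3}$ as the quotient stack of a fine level cover $\M{3}[N]$ (for $N\ge 3$) by $G=\Sp{6}{\ZZ/N}$, the involution $\iota$ is precisely the action of $-I\in G$, sending $(C,\alpha)$ to $(C,-\alpha)$. A Teichm\"uller modular form is, by definition, a section of $\EE^{\prime}_{\rho}$ over the stack $\M{3}$, hence a $G$-invariant section over $\M{3}[N]$; in particular it is invariant under $\iota$. Since $\rho$ is even the fibrewise scalar $(-1)^{\rho_1+\rho_2+\rho_3}$ equals $1$, so this invariance is literally $\iota^{*}F=F$.

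Next I would invoke descent for the finite degree-two map $t\colon\M{3}\to\A{3}-\A{2,1}$. By the projection formula $t_{*}\EE^{\prime}_{\rho}=\EE_{\rho}\otimes t_{*}\mathcal{O}_{\M{3}}$, and the splitting $t_{*}\mathcal{O}_{\M{3}}=\mathcal{O}\oplus\mathcal{L}$ into $\iota$-eigensheaves identifies the $\iota$-invariant part with $\EE_{\rho}$ itself. Hence the invariant section $F$ is the pullback $F=t^{*}G$ of a (unique) holomorphic section $G$ of $\EE_{\rho}$ over the open substack $\A{3}-\A{2,1}$. Finally I would extend $G$ across the boundary: the decomposable locus $\A{2,1}$, the image of $\A{2}\times\A{1}$, has codimension $2$ in the smooth stack $\A{3}$, and $\EE_{\rho}$ is locally free, so by the second Riemann extension theorem $G$ extends to a holomorphic section of $\EE_{\rho}$ over all of $\A{3}$, that is, to a Siegel modular form of weight $w(\rho)$ with $t^{*}G=F$.

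The step I expect to be the main obstacle is the first one: pinning down the action of $\iota$ on $\EE^{\prime}_{\rho}$ precisely enough to force $\iota^{*}F=F$. The subtlety is that the intrinsic linearization coming from the universal curve and the linearization coming from the identification $\EE^{\prime}_{\rho}=t^{*}\EE_{\rho}$ a priori differ by the sign $(-1)^{\rho_1+\rho_2+\rho_3}$, and they agree exactly when $\rho$ is even. One sees the two match along the fixed (hyperelliptic) locus because there $\iota$ is realized by the hyperelliptic involution, which acts by $-1$ on $H^{0}(C,\Omega^{1}_{C})$, hence by $(-1)^{\rho_1+\rho_2+\rho_3}$ on $\EE^{\prime}_{\rho}$; off that locus the discrepancy is again this scalar. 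This is also the structural reason that in the odd case one instead finds $F$ anti-invariant, whence $F$ vanishes on the hyperelliptic locus and is divisible by $\chi_9$, with the quotient a pullback of a Siegel modular form.
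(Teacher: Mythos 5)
Your proof is correct and follows essentially the same route as the paper's: evenness gives $\iota$-invariance, the form descends along the degree-two Torelli map to a section of ${\EE}_{\rho}$ over $t(\M{3})=\A{3}-\A{2,1}$, and extension across the codimension-two locus $\A{2,1}$ in the smooth stack $\A{3}$ produces the Siegel modular form pulling back to $F$. Your explicit comparison of the two linearizations of $\iota$ on ${\EE}^{\prime}_{\rho}$ (the one from the universal curve versus the one from ${\EE}^{\prime}_{\rho}=t^{*}{\EE}_{\rho}$, differing by $(-1)^{\rho_1+\rho_2+\rho_3}$) is a useful elaboration of what the paper compresses into the remark that $\iota$ acts as $-1$ on the fibers of ${\EE}^{\prime}$.
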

\begin{proof}
Note that the complement of the image of $\M{3}$ in $\A{3}$
is $\A{2,1}$, the locus of products of principally polarized abelian varieties,
of codimension two in $\A{3}$.
An even Teichm\"uller modular form of weight $w(\rho)$ 
is invariant under $\iota$ and descends to
a section of $\EE_{\rho}$ over $t(\M{3})$. Since $\A{3}$ is a smooth
stack and the codimension of $\A{2,1}$ is two, this section extends
to $\A{3}$ and yields the Siegel modular form that pulls back
to the Teichm\"uller modular form.
\end{proof}
We also have the following extension of Ichikawa's result.
\begin{lemma} \label{divisible}
An odd Teichm\"uller modular form
is divisible by $\chi_9$, i.e., it is
the product of $\chi_9$ and an even Teichm\"uller modular form.
\end{lemma}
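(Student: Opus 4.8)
The plan is to form the meromorphic quotient $G := F/\chi_9$ and to show that it is in fact a \emph{holomorphic even} Teichm\"uller modular form; the lemma just proved—that an even Teichm\"uller modular form is the pullback of a Siegel modular form—then identifies $G$ with such a pullback and exhibits $F=\chi_9\cdot G$ as the desired product.

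First I would record the two facts about $\chi_9$ that matter. By (7) its divisor on $\barM{3}$ is $h+\delta_0+3\,\delta_1$, so on the open moduli space $\M{3}$ the form $\chi_9$ is nowhere zero away from the hyperelliptic locus $h$ and vanishes there to order exactly $1$; the latter can be seen from $\chi_9^2=t^{*}\chi_{18}$ together with $t^{*}H=2h$ and $\mathrm{ord}_H(\chi_{18})=1$. Next, since $F$ is odd, at every point of $h$ the hyperelliptic involution acts as $-1$ on the fibre of ${\EE}^{\prime}$ and hence as $(-1)^{\rho_1+\rho_2+\rho_3}=-1$ on the fibre of ${\EE}^{\prime}_{\rho}$; invariance of the section $F$ under this automorphism forces $F$ to vanish along $h$, that is $\mathrm{ord}_h(F)\geq 1$. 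This is precisely the vanishing already noted before the statement.

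Combining these, the quotient $G=F/\chi_9$ satisfies $\mathrm{ord}_h(G)=\mathrm{ord}_h(F)-1\geq 0$ and is manifestly holomorphic on $\M{3}-h$; hence $G$ is a holomorphic section of ${\EE}^{\prime}_{\rho}\otimes(\det{\EE}^{\prime})^{-9}$ over all of $\M{3}$, i.e.\ a Teichm\"uller modular form. Its weight corresponds to the representation $\rho\otimes\det^{-9}$, whose highest weight has coordinate sum $(\rho_1+\rho_2+\rho_3)-27$, which is even because $F$ is odd; thus $G$ is even. By the preceding lemma $G$ is the pullback of a Siegel modular form, and by construction $F=\chi_9\cdot G$ on $\M{3}$. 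Since both sides extend to $\barM{3}$ by Proposition~\ref{extensiontoM3bar} and agree on the dense open $\M{3}$, the identity holds on $\barM{3}$, which is the assertion.

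The only genuinely nontrivial ingredient is the inequality $\mathrm{ord}_h(F)\geq 1$, and the slightly delicate point is that $\chi_9$ vanishes to order \emph{exactly} $1$ along $h$: were this order larger, holomorphy of $G$ along $h$ would fail. Both are already in hand, so no further estimate is required. In particular the boundary orders $\mathrm{ord}_{\delta_0}(F)\geq 1$ and $\mathrm{ord}_{\delta_1}(F)\geq 3$ demanded by (7) are recovered automatically from $F=\chi_9\,G$ rather than proved separately, which is why it is cleaner to argue the divisibility on the interior $\M{3}$ and extend, instead of estimating vanishing orders boundary component by boundary component.
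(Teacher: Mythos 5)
Your proposal is correct and follows essentially the same route as the paper: form $G=F/\chi_9$, which a priori has at most a simple pole along $\Hy3$ since $\chi_9$ vanishes there to order exactly $1$, note that oddness of $F$ forces vanishing along $\Hy3$, and conclude that $G$ is a regular, even Teichm\"uller modular form. The extra details you supply (the exact order of $\chi_9$ via (7), the parity bookkeeping, and the extension to $\barM{3}$) are all consistent with the paper's shorter argument.
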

\begin{proof}
Let $f$ be an odd Teichm\"uller modular form. Consider $g=f/\chi_9$,
a priori a meromorphic Teichm\"uller modular form with possibly a simple
pole along $\Hy3$.
Since $f$ vanishes on $\Hy3$, it follows that $g$ is a regular
Teichm\"uller modular form, which clearly is even. (So $g$ is the
pullback of a Siegel modular form.)
\end{proof}

A Teichm\"uller modular form extends holomorphically to $\barM{3}$~:

\begin{proposition}\label{extensiontoM3bar}
Let $f$ be a Teichm\"uller modular form of weight $w(\rho)$.
Then $f$ extends to a holomorphic section of 
${\EE}^{\prime}_{\rho}$ on $\barM{3}$. 
\end{proposition}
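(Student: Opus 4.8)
The plan is to reduce everything to the Koecher principle on $\tilde{\A{3}}$ by exploiting the parity decomposition of Teichm\"uller modular forms already established above. The key structural fact to keep in mind is that $\EE'_{\rho}$ on $\barM{3}$ is \emph{defined} as the pullback $\bar t^{*}{\EE}_{\rho}$ of the canonically extended Faltings--Chai bundle under the extended Torelli morphism $\bar t\colon\barM{3}\to\tilde{\A{3}}$. Hence it suffices to identify $f$, over the dense open $\M{3}$, with the pullback of a holomorphic section on $\tilde{\A{3}}$ that is already known to extend across infinity; such a pullback then automatically provides the sought extension, and it is unique because $\barM{3}$ is reduced with $\M{3}$ dense.

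First I would split according to the parity of $\rho_1+\rho_2+\rho_3$. In the \emph{even} case, the preceding lemma shows that $f$ is the pullback under $t$ of a genuine Siegel modular form $F$, holomorphic on all of $\A{3}$ (the extension across the codimension-two locus $\A{2,1}$ being automatic). The Koecher principle extends $F$ to a holomorphic section $\bar F$ of ${\EE}_{\rho}$ over $\tilde{\A{3}}$, and then $\bar t^{*}\bar F$ is a holomorphic section of $\EE'_{\rho}=\bar t^{*}{\EE}_{\rho}$ over $\barM{3}$ restricting to $t^{*}F=f$ on $\M{3}$. This is the desired extension.

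In the \emph{odd} case I would invoke Lemma~\ref{divisible} to write $f=\chi_9\,g$ with $g$ an even Teichm\"uller modular form of weight $w(\rho')$, where $\rho'=\rho-(9,9,9)$; one checks that $\rho'_1+\rho'_2+\rho'_3$ is even, so $g$ falls under the even case and extends holomorphically to $\barM{3}$. By Ichikawa's result $\chi_9$ is a Teichm\"uller modular form already defined on $\barM{3}$, with divisor $h+\delta_0+3\,\delta_1$, hence holomorphic along the boundary. The product $\chi_9\,g$ is then a holomorphic section over $\barM{3}$ of $\det^9({\EE}^{\prime})\otimes\EE'_{\rho'}=\EE'_{\rho}$, the tensor-by-determinant shifting the highest weight by $(9,9,9)$, and it restricts to $f$ on $\M{3}$.

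The one conceptual subtlety worth flagging is that the extended Torelli map treats the two boundary divisors of $\barM{3}$ very differently: $\delta_1$ (a separating node) maps into the \emph{interior} locus $\A{2,1}$ of decomposable abelian threefolds, whereas $\delta_0$ (a non-separating node) maps to the divisor at infinity of $\tilde{\A{3}}$. I expect the main point to be confirming that both degenerations are absorbed by pulling back the single canonically extended object on $\tilde{\A{3}}$: holomorphy along $\delta_1$ comes from holomorphy of $F$ on all of $\A{3}$ (including on $\A{2,1}$), while holomorphy along $\delta_0$ comes from the Koecher extension across infinity. Thus no separate boundary computation is required, and the even/odd dichotomy disposes of the proposition uniformly.
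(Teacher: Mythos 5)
Your proof is correct and follows essentially the same route as the paper: the parity split, with the even case handled by the pullback-of-a-Siegel-form lemma plus the Koecher principle on $\tilde{\A{3}}$, and the odd case reduced to the even one via Lemma~\ref{divisible} and the holomorphy of $\chi_9$ on $\barM{3}$. Your closing remarks on how the extended Torelli map treats $\delta_0$ versus $\delta_1$ are a sensible elaboration, but the core argument coincides with the paper's proof.
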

\begin{proof}
If $f$ is even, then it is the pullback of a Siegel modular form $F$.
By the Koecher principle, $F$ extends to $\tilde{\mathcal{A}}_3$.
The pullback of the extension gives the extension of $f$ to $\barM{3}$.
If $f$ is odd, it is
the product of $\chi_9$ and an even Teichm\"uller modular form.
Both extend, so $f$ extends as well.
\end{proof}

\begin{remark}
In the appendix (\S\ref{appendix}), we prove
that a Teichm\"uller modular form extends to $\barM{g}$ for
any $g\ge3$.
\end{remark}

\begin{corollary}
An odd Teichm\"uller modular form is a cusp form, 
i.e., it vanishes along $\delta_0$. It also vanishes
with multiplicity at least three along $\delta_1$. If it is
nonzero and
of weight $w(\rho)$, then $\rho_3\ge 9$.
\end{corollary}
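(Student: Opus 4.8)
The plan is to exploit the factorization of odd forms through $\chi_9$ furnished by Lemma~\ref{divisible}, combined with the explicit divisor of $\chi_9$ in~(7) and the extension statement of Proposition~\ref{extensiontoM3bar}. So let $f$ be a nonzero odd Teichm\"uller modular form of weight $w(\rho)$. By Lemma~\ref{divisible} I would write $f=\chi_9\,g$, where $g$ is an even Teichm\"uller modular form and hence, by the lemma preceding Lemma~\ref{divisible}, the pullback of a Siegel modular form. Since $f\neq 0$ and $\chi_9\neq 0$, also $g\neq 0$. By Proposition~\ref{extensiontoM3bar} both $f$ and $g$ extend holomorphically to $\barM{3}$, so $g$ has non-negative order of vanishing along every boundary divisor.

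The first two assertions then follow from bookkeeping with $\mathrm{div}(\chi_9)=h+\delta_0+3\,\delta_1$. Because $g$ is holomorphic on $\barM{3}$, the order of vanishing of $f=\chi_9\,g$ along any boundary divisor is at least that of $\chi_9$. In particular $f$ vanishes along $\delta_0$, which is exactly the cusp condition, and it vanishes to order at least $3$ along $\delta_1$. No computation beyond reading off~(7) is required here.

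For the weight bound I would argue as follows. As a section of $\EE_{\rho'}$, the nonzero Siegel modular form $g$ has highest weight $\rho'=(\rho_1-9,\,\rho_2-9,\,\rho_3-9)$, equivalently weight $w(\rho)-(0,0,9)=(\rho_1-\rho_2,\,\rho_2-\rho_3,\,\rho_3-9)$. The key input is the standard vanishing theorem that a nonzero holomorphic Siegel modular form has non-negative determinant weight, i.e.\ the smallest entry $\rho'_3$ of its highest weight satisfies $\rho'_3\ge 0$. This gives $\rho_3-9\ge 0$, that is $\rho_3\ge 9$.

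The main obstacle is isolating and justifying this last vanishing statement: the divisor computation is immediate, but the bound $\rho_3\ge 9$ rests on the fact that there are no nonzero holomorphic Siegel modular forms of degree~$3$ with negative determinant weight. I would cite the usual growth/positivity argument (Freitag) for this rather than reprove it, stressing that the nonvanishing of $g$ — which uses that $f\neq 0$ and that the relevant ring of forms is a domain — is exactly what makes the argument bite.
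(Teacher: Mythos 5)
Your argument is correct and is essentially the paper's own (implicit) proof: the corollary is stated there without separate proof precisely because it follows from Lemma~\ref{divisible}, the divisor formula ${\rm div}(\chi_9)=h+\delta_0+3\,\delta_1$ in (7) together with Proposition~\ref{extensiontoM3bar}, and the standard fact that a nonzero holomorphic Siegel modular form of weight $(i,j,k)$ with $k<0$ vanishes --- the very fact the paper itself invokes when dividing by $\chi_{18}$ to obtain a form of weight $(8,0,-2)$ ``which has to be zero''. (One cosmetic remark: you do not need the ring of forms to be a domain to conclude $g\neq0$, since $f=\chi_9\,g\neq0$ forces it directly.)
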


Consider the modular form $\chi_{4,0,8}$.
We define a Siegel modular cusp form of weight $(4i,0,8i)$ by 
taking the $i$th symmetric power of $\chi_{4,0,8}$
and projecting:
$$ \chi^{(i)}= {\rm pr}_{[4i,0,8i]}(\Sym^i(\chi_{4,0,8})), $$
where ${\rm pr}_{[4i,0,8i]}$ is the projection
of the $i$th symmetric power of 
$\Sym^4(\EE)\otimes\det^8(\EE)$ onto $\Sym^{4i}(\EE)\otimes\det^{8i}(\EE)$.

\begin{lemma}
The form $\chi^{(i)}$ is not identically zero.
\end{lemma}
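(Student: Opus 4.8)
The plan is to evaluate $\chi^{(i)}$ pointwise and reduce the statement to the triviality that a power of a nonzero quartic is nonzero. Trivialize the pullback of $\EE$ to $\mathfrak{H}_3$ as $\mathfrak{H}_3\times\CC^3$ with coordinates $z_1,z_2,z_3$, as in Section~\ref{chi408}. Under this trivialization $\Sym^4(\EE)$ is identified with the space of quartic forms in $z_1,z_2,z_3$, so $\chi_{4,0,8}$ becomes a holomorphic family of such forms, $\tau\mapsto Q_\tau(z)=\sum_I n_I\,\alpha_I(\tau)\,z^I$; the factor $\det^8(\EE)$ contributes only a nowhere-vanishing scalar once $\EE$ is trivialized, and may be suppressed.

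First I would make the projection $\mathrm{pr}_{[4i,0,8i]}$ explicit. As $\det^8(\EE)$ is a line bundle, $\Sym^i(\Sym^4(\EE)\otimes\det^8(\EE))=\Sym^i(\Sym^4(\EE))\otimes\det^{8i}(\EE)$, and in the $\GL{3}{\CC}$-module $\Sym^i(\Sym^4(\CC^3))$ the summand $\Sym^{4i}(\CC^3)$ of highest weight $(4i,0,0)$ occurs with multiplicity one, being the Cartan component generated by the $i$-th power of the highest-weight vector of $\Sym^4(\CC^3)$. The polynomial multiplication map $m\colon\Sym^i(\Sym^4(\CC^3))\to\Sym^{4i}(\CC^3)$, sending a symmetric product of quartics to their honest product in $\CC[z_1,z_2,z_3]$, is a nonzero $\GL{3}{\CC}$-equivariant surjection; by Schur's lemma it annihilates every isotypic component other than $\Sym^{4i}(\CC^3)$ and restricts to a nonzero scalar on that one. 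Hence $m=\lambda\,\mathrm{pr}_{[4i,0,8i]}$ for some $\lambda\neq0$, and in particular $\mathrm{pr}_{[4i,0,8i]}(\Sym^i(g))=\lambda^{-1}g^i$ for any single quartic $g$.

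Evaluating at a point $\tau$ and using that $\mathrm{pr}$ is a fibrewise bundle map, I then obtain $\chi^{(i)}(\tau)=\lambda^{-1}\,Q_\tau^{\,i}$ (up to the nowhere-vanishing $\det^{8i}$-factor). Now $Q_\tau$ is not identically zero: by Proposition~\ref{Frobeniusphi0}, for $\tau$ corresponding to a nonhyperelliptic Jacobian $\mathrm{Jac}(Y)$ the quartic $Q_\tau$ is the equation of the canonical image of $Y$, hence nonzero. Fixing such a $\tau_0$, the polynomial $Q_{\tau_0}$ is a nonzero element of the integral domain $\CC[z_1,z_2,z_3]$, so $Q_{\tau_0}^{\,i}\neq0$, whence $\chi^{(i)}(\tau_0)\neq0$ and $\chi^{(i)}\not\equiv0$.

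The only step requiring genuine care — the main obstacle, modest as it is — is the identification of $\mathrm{pr}_{[4i,0,8i]}$ with the multiplication map, which rests entirely on the multiplicity-one statement; the remainder is formal. A Fourier-theoretic proof is possible in principle, by extracting the coefficient of $(q_1q_2q_3)^i$ of $\Sym^i(\chi_{4,0,8})$, but there the leading contribution $\Sym^i(a(1_3))$ competes with cross terms arising from other decompositions of $i\cdot 1_3$ into positive definite half-integral matrices, so one would have to rule out cancellation; the pointwise geometric argument avoids this bookkeeping altogether.
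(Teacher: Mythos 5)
Your proof is correct, and it takes a genuinely different route from the paper's. The paper argues via the Schottky form: since $\chi_{4,0,8}\otimes\Delta$ is the leading term of $J_8$ in the normal expansion along $\mathfrak{H}_3\times\mathfrak{H}_1$, and the order of vanishing of $J_8^i$ along that locus is exactly $i$ times that of $J_8$, the leading term of $J_8^i$ is $\chi^{(i)}\otimes\Delta^i$, which is therefore nonzero. You instead work pointwise: your multiplicity-one claim is right (the weight space of weight $(4i,0,0)$ in $\Sym^i(\Sym^4(\CC^3))$ is one-dimensional, spanned by the $i$-th symmetric power of $x^4$, since that weight forces every factor to have weight $(4,0,0)$), and Schur's lemma then correctly identifies ${\rm pr}_{[4i,0,8i]}$ up to a nonzero scalar with the polynomial multiplication map, giving $\chi^{(i)}(\tau)=c\,Q_\tau^{\,i}$ with $c\neq0$, so the statement reduces to a power of a nonzero quartic being nonzero. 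In fact the two proofs share the same algebraic kernel: when the paper says the development of $J_8^i$ yields $\chi^{(i)}\otimes\Delta^i$, it is implicitly using exactly your Cartan-component identification, since the leading Taylor term of $J_8^i$ in the normal coordinates is the $i$-th power of the quartic $Q_{\tau'}$ times $\Delta^i$. What your version buys is self-containedness -- you dispense with $J_8$ entirely, needing only $\chi_{4,0,8}\not\equiv0$ -- and a slightly stronger conclusion, namely that $\chi^{(i)}(\tau)\neq0$ at \emph{every} $\tau$ where $\chi_{4,0,8}(\tau)\neq0$. What the paper's route buys is economy within its own framework, since the same development of $J_8$ underlies \S\ref{FJ} and the order-of-vanishing computations, so the lemma comes essentially for free there. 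One small citation point: Proposition~\ref{Frobeniusphi0} concerns the form built from the Frobenius function $\phi$, so transferring its canonical-quartic description to $\chi_{4,0,8}$ strictly requires $\dim S_{4,0,8}=1$ (or Proposition~\ref{basic}); alternatively, the nonvanishing of $\chi_{4,0,8}$ is immediate from the explicit nonzero coefficient of $q_1q_2q_3$ in its Fourier expansion in \S\ref{chi408}, which would make your final step independent of the geometric input. Either way, the argument stands.
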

\begin{proof}
We use that $\chi_{4,0,8}\otimes \Delta$ is obtained as the first term
of the degree $4$ scalar-valued Siegel cusp form $J_8$ of weight $8$ 
when developed in the normal directions of $\mathfrak{H}_3\times
\mathfrak{H}_1$. By developing $J_8^i$ we find $\chi^{(i)}\otimes \Delta^{i}$,
since the order of vanishing of $J_8^i$ along
$\mathfrak{H}_3\times \mathfrak{H}_1$ is $i$ times the order of  $J_8$.
\end{proof}
\smallskip
Note that the order of $\chi^{(i)}$ along $D$ is at least $i$. We now give 
a result on the orders of vanishing of $\chi_{18}$ and $\chi_{4,0,8}$.

\begin{lemma}
The orders of vanishing of $\chi_{18}$ and $\chi_{4,0,8}$  
along the hyperelliptic locus $H$, the divisor $D$
at infinity, and the locus $\mathcal{A}_{2,1}$ 
of products are given in the following table.
\end{lemma}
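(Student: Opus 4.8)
The plan is to verify the six table entries one at a time, since the two forms are governed by quite different structures. For $\chi_{18}$ the entries along $H$ and $D$ are immediate from the divisor formula $\mathrm{div}(\chi_{18})=H+2D$ recorded in~(3): the multiplicity is $1$ along $H$ and $2$ along $D$, the latter already visible in the leading Fourier term $q_1^2q_2^2q_3^2$. The only genuine computation is the order along $\A{2,1}$, and here I would use that $\chi_{18}$ is, up to scalar, the product of the $36$ even theta constants. On the decomposable locus $\mathfrak{H}_2\times\mathfrak{H}_1$ every genus-$3$ theta constant factors as (genus-$2$)$\times$(genus-$1$), and the parity of the genus-$3$ characteristic is the sum of the two parities. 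Hence an even genus-$3$ characteristic restricts either to (even)$\times$(even), which is generically nonzero, or to (odd)$\times$(odd), which vanishes identically on $\A{2,1}$ because each factor is an odd theta null. Counting gives $10\cdot 3=30$ of the first kind and $6\cdot 1=6$ of the second, so exactly $6$ of the $36$ factors vanish along $\A{2,1}$.

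It then remains to check that each such (odd)$\times$(odd) factor vanishes to order exactly~$1$. For this I would expand in the normal coordinates $\tau_{13},\tau_{23}$ to $\mathfrak{H}_2\times\mathfrak{H}_1$ and use the heat equation $\partial\vartheta/\partial\tau_{13}=c\,\partial^2\vartheta/\partial z_1\partial z_3$ (nonzero constant $c$). On the locus the mixed derivative factors as $\partial_{z_1}\vartheta_2(\tau',0)\cdot\partial_{z_3}\vartheta_1(\tau'',0)$, the product of the gradient of an odd genus-$2$ theta and the derivative of an odd genus-$1$ theta at the origin, both generically nonzero. Thus the first normal derivative of the factor does not vanish identically, the factor has order exactly~$1$ along $\A{2,1}$, and multiplying the $36$ factors gives order $6\cdot1=6$ for $\chi_{18}$.

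For $\chi_{4,0,8}$ two of the three entries are already available: the leading Fourier term $q_1q_2q_3$ gives order $1$ along $D$, and Lemma~\ref{I2notI3} (coordinates in $\mathcal{I}^2$ but not all in $\mathcal{I}^3$) gives order $2$ along $\A{2,1}$. The delicate entry is along $H$, where the claim is that $\chi_{4,0,8}$ does \emph{not} vanish, i.e.\ the order is~$0$. I would deduce this from Proposition~\ref{Frobeniusphi0}: the length-$15$ vector attached to $\chi_{4,0,8}$ is the quartic cutting out the canonical image, and as a curve degenerates to a hyperelliptic one this quartic tends to the double conic $C^2$ with $C$ a nondegenerate conic, a nonzero quartic; hence $\chi_{4,0,8}|_H\neq 0$. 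The same conclusion serves as a consistency check via the factorisation $\chi_{4,0,8}=\chi_9\,\chi_{4,0,-1}$: since $\chi_9$ has a simple zero along $\Hy3$ (from~(7)) while the associated quartic $\chi_{4,0,-1}$ acquires a compensating simple pole there, the product has order $0$ along $\Hy3$, hence along $H$.

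The main obstacle is the $\A{2,1}$ entry for $\chi_{18}$. The enumeration of even characteristics that degenerate to (odd)$\times$(odd) is routine, but the crux is justifying that the first normal derivative of each such factor is nonzero on a dense subset of $\A{2,1}$, so that the order is \emph{exactly} one and the six contributions add; this is precisely where the nonvanishing of the odd theta gradients of genus $1$ and $2$ is needed. By comparison, the nonvanishing of $\chi_{4,0,8}$ along $H$ is conceptually the subtle point but is pinned down cleanly once the double-conic limit is invoked.
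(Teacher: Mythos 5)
Five of your six entries are sound, and your treatment of ${\rm ord}_{\A{2,1}}\chi_{18}=6$ is genuinely different from the paper, which simply cites \cite[Proposition~5.2]{C-vdG}: your splitting of the $36$ even genus-$3$ theta constants along $\mathfrak{H}_2\times\mathfrak{H}_1$ into $30$ factors of type (even)$\times$(even) and $6$ of type (odd)$\times$(odd), with the heat-equation computation showing each of the latter has multiplicity exactly one, is a correct and self-contained substitute for the citation. Two small points there: since $\A{2,1}$ has codimension two, ``order'' must be read as multiplicity with respect to powers of the ideal sheaf (as in Lemma~\ref{I2notI3}), and the additivity $6\cdot 1=6$ needs the remark that initial forms multiply without cancellation in the associated graded algebra along the smooth locus (together with the generic nonvanishing of the product of the six odd theta gradients). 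The entries for $\chi_{18}$ along $H$ and $D$, and for $\chi_{4,0,8}$ along $D$ and $\A{2,1}$, coincide with the paper's: equation (3), the leading term $q_1q_2q_3$ from \S\ref{chi408}, and Lemma~\ref{I2notI3}.

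The genuine gap is in the one entry that requires real work, ${\rm ord}_H\chi_{4,0,8}=0$. Proposition~\ref{Frobeniusphi0} identifies the value of $\chi_{4,0,8}$ at a nonhyperelliptic Jacobian with the canonical quartic only up to a nonzero scalar depending on the moduli point, and the classical degeneration you invoke controls only the \emph{projective} class: as the curve becomes hyperelliptic the projectivized quartics tend to the double conic, but the actual vector of coefficients --- the value of the section --- could tend to $0$ while its projectivization tends to $[C^2]$. Asserting that the limit vector is the nonzero double conic is precisely the statement to be proved; to make the $\phi$-picture deliver it you would need to show that $\phi(\tau,\cdot)$ vanishes to order \emph{exactly} $4$ at the origin for hyperelliptic $\tau$ (say via the divisor $2(Y-Y)$ and its multiplicity-two tangent cone over the conic), an analysis you do not carry out. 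Your fallback via $\chi_{4,0,8}=\chi_9\,\chi_{4,0,-1}$ is circular: in the paper the simple pole of $\chi_{4,0,-1}$ along $\Hy3$ is itself deduced (Proposition~\ref{basic}, Proposition~\ref{orderDC}) from this very table entry. The paper's actual argument sidesteps all of this: $\chi^{(2)}={\rm pr}_{[8,0,16]}(\Sym^2(\chi_{4,0,8}))$ is nonzero by the development of $J_8^2$ along $\mathfrak{H}_3\times\mathfrak{H}_1$; it vanishes to order $\geq 2$ along $D$, so if it also vanished along $H$ it would be divisible by $\chi_{18}$ (whose divisor is $H+2D$), producing a nonzero holomorphic Siegel modular form of weight $(8,0,-2)$, which cannot exist; hence ${\rm ord}_H\chi^{(2)}=0$ and therefore ${\rm ord}_H\chi_{4,0,8}=0$. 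You should replace the double-conic limit by this (or an equivalent order-exactly-four) argument.
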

%
\vbox{
\bigskip\centerline{\def\quad{\hskip 0.6em\relax}
\def\quod{\hskip 0.5em\relax }
\vbox{\offinterlineskip
\hrule
\halign{&\vrule#&\strut\quod\hfil#\quad\cr
height2pt&\omit&&\omit&&\omit&&\omit&\cr
& $F$ && ${\rm ord}_D$  && ${\rm ord}_{H}$ && ${\rm ord}_{\mathcal{A}_{2,1}}$ &\cr
\noalign{\hrule}
& $\chi_{18}$ && $2$  && $1$ && $6$  & \cr
& $\chi_{4,0,8}$ && $1$  &&  $0$ && $2$  & \cr
\noalign{\hrule}
} \hrule}
}}

\begin{proof}
The vanishing orders of $\chi_{18}$ along $D$ and $H$ were given in Section 3,
the order along $\mathcal{A}_{2,1}$ is given in \cite[Proposition~5.2]{C-vdG}. 
If the form  $\chi^{(2)}$ would vanish on the hyperelliptic locus, 
we could divide it by $\chi_{18}$, obtaining a holomorphic
Siegel modular form of weight $(8,0,-2)$, which has to be zero.
So $\chi^{(2)}$ does not vanish along $H$.
It follows that ${\rm ord}_{H}\chi_{4,0,8}=0$.
For the remaining entries, see \S\ref{chi408}, in particular
Lemma~\ref{I2notI3}.
\end{proof} 

\end{section}

\begin{section}{Invariants and Concomitants of Ternary Quartics}\label{invariants}
Let $V$ be a $3$-dimensional vector space over ${\CC}$ 
generated by elements $x,y$ and $z$. 
We will denote the space of ternary quartics by
$\Sym^4(V)$ and we write a ternary quartic as 
$$
\begin{aligned}\label{normalization}
f=a_0\, x^4+ 4\, a_1\, x^3y+4\, a_2\, x^3z+6\, a_3\, x^2y^2+12\, a_4\, x^2yz
+6\, a_5\, x^2z^2+&\\
4\, a_6\, xy^3 +12\, a_7\, xy^2z+12\, a_8\, xyz^2+4\, a_9 \, xz^3 + a_{10}\, y^4+&\\
4\, a_{11} \, y^3z+  6\, a_{12}\, y^2z^2+4\, a_{13}\, yz^3+a_{14}z^4 & \, . \\
\end{aligned}
$$
Note that we order the monomials lexicographically and normalize $x^ay^bz^c$
by the factor $4!/a!\, b!\, c!$. 

An element $A=(a_{ij})$ of the  group ${\rm GL}(3,{\CC})$ 
acts on $\Sym^4(V)$ via
$$f(x,y,z) \mapsto 
f(a_{11}x+a_{21}y+a_{31}z,a_{12}x+a_{22}y+a_{32}z,a_{13}x+a_{23}y+a_{33}z) \, .
$$
We also consider the induced actions of
${\rm PGL}(3,{\CC})$ and ${\rm SL}(3,{\CC})$ on ${\mathcal Q}
={\PP}(\Sym^4(V))$. We take $(a_0,a_1,\ldots,a_{14})$ as coordinates
on $\Sym^4(V)$ and ${\mathcal Q}$. The natural ample line bundle ${\mathcal L}=
{\mathcal O}_{\mathcal Q}(1)$ on ${\mathcal Q}$ admits an action
of ${\rm SL}(3,{\CC})$ compatible with the action on the 
projectivized space ${\mathcal Q}$ of ternary quartics, 
cf.\ \cite{Mumford-GIT}.

By definition an invariant is an element of the ring
$$
I= \oplus_{n\geq 0} 
{\rm H}^0({\mathcal Q}, {\mathcal L}^n)^{{\rm SL}(3,{\CC})}\, ,
$$
that is, it is a polynomial in the coefficients 
$a_0,\ldots,a_{14}$ of $f$ 
which is invariant under the action of ${\rm SL}(3,{\CC})$.
By the work of G. Salmon, T. Shioda and J. Dixmier 
(see \cite{Salmon}, \cite{Shioda} and \cite{Dixmier}),
see also the unpublished work of T. Ohno \cite{Ohno},
we know the structure of the ring of invariants of ternary quartics 
(see  \cite[Theorem 3.2]{Dixmier}:
seven algebraically independent generators of degrees
$3, 6, 9, 12, 15, 18, 27$ which form a system of parameters of 
$I$ and six more basic invariants of degrees $9, 12, 15, 18, 21, 21$).
(See also \cite{Elsenhans} and the recent paper \cite{LerRit}.)
The Poincar\' e series of this
graded ring was determined by T. Shioda (see \cite{Shioda}):
\[
\sum_{n=0}^{\infty}
\dim (I_n)\, t^n=
\frac
{P(t)}
{(1-t^3)(1-t^6)(1-t^9)(1-t^{12})(1-t^{15})(1-t^{18})(1-t^{27})}\, ,
\]
where $I_n$ denotes the graded piece of $I$ of degree $n$ and $P$ is 
the palindromic polynomial
$$
\begin{aligned} 
t^{75}+t^{66}+t^{63}+t^{60}+2\,t^{57}+3\,t^{54}+2\,t^{51}
+3\,t^{48}+4\,t^{45}+3\,t^{42}+4\,t^{39}+ &\\
4\,t^{36}+3\,t^{33} 
+4\,t^{30}+3\,t^{27}+2\,t^{24}+3\,t^{21}+2\,t^{18}+
t^{15}+t^{12}+t^{9}+1& \, .
\end{aligned}
$$
This generating series  $\sum_{n=0}^{\infty} \dim (I_n) \, t^n$ 
starts as follows
\[
1+t^{3}+2\,t^{6}+4\,t^{9}+7\,t^{12}+11\,t^{15}+19\,t^{18}
+29\,t^{21}+44\,t^{24}+67\,t^{27}+\ldots
\]

Before we come to the notion of concomitant, we will 
fix coordinates on $\wedge^2 V$. We take
$$
\hat{x}=y\wedge z, \qquad \hat{y}=z \wedge x, \qquad \hat{z}=x \wedge y\, ; \eqno(8)
$$
then a basis of $\Sym^i(V) \otimes \Sym^j(\wedge^2 V)$
is given by the lexicographically ordered monomials
of degree $i$ in $x,y,z$ and degree $j$ in $\hat{x}, \hat{y},\hat{z}$.

We recall the notion of concomitants for ternary quartics.
Write $W=V^{\vee}$.
Consider
an equivariant
inclusion of $\SL{3}{\CC}$-representations
$$
\varphi\colon A \to \Sym^d (\Sym^4(W)),
$$
or equivalently
$$
{\CC} \hookrightarrow \Sym^d (\Sym^4(W))\otimes A^{\vee}\, ,
$$
where $\CC$ denotes the trivial representation; write $\Phi$ for the
image of $1$ under this map.
If $A=W[\rho]$ is the irreducible representation of highest weight
$\rho$ we call $\Phi$ a concomitant of type $(d,\rho)$.
It can be viewed as a form of degree $d$ in the coordinates $a_i$ of
the ternary quartic and of degree $\rho_1-\rho_2$ (resp.\
$\rho_2-\rho_3$) in $x,y,z$ (resp.\ $\hat{x},\hat{y},\hat{z}$).
In essence, we follow here
Chipalkatti's set-up in \cite{Chipalkatti}, but the notation is the dual one
and our basis of $\Sym^4(V)$ includes the multinomial coefficients.
Also, although we work with $\SL{3}{\CC}$ here, we have used
the notation introduced earlier
for the irreducible representations of $\GL{3}{\CC}$,
since we will soon need to work with the latter group.

The simplest nontrivial example is the case where $d=1$ and $A=\Sym^4(W)$
and the inclusion is the identity. 
Then the
corresponding concomitant $\Phi$ is the universal ternary quartic $f$ 
given in the beginning of this section. 

For $d=2$ we have the decomposition
$$
\Sym^2(\Sym^4(W))=W[8,0,0]+W[6,2,0]+W[4,4,0]\, .
$$
The concomitants corresponding to the first and last isotypical component are
$f^2$ and a classical concomitant denoted $\sigma$
by Salmon (see \cite[p.~264]{Salmon}). It is a quartic polynomial in 
$\hat{x},\hat{y},\hat{z}$ with coefficients of degree $2$ in the $a_i$, see 
Section \ref{constructing}.

The concomitants form a module $C$ over the ring $I$ of invariants.
\end{section}

\begin{section}{Quartic Curves and Moduli} \label{Quartics}
Let $\M{3}$ be the moduli space of curves of genus $3$ 
and ${\mathcal M}_3^{\rm nh}=
\M{3}-\mathcal{H}_{3}$ 
the open part 
of nonhyperelliptic curves.
The canonical image of a nonhyperelliptic curve is a quartic curve in
${\PP}^2$.  

To describe $\M{3}^{\rm nh}$ we consider a $3$-dimensional vector space
$V$ over a field $k$, say generated by $x,y,z$. 
We may view $\Sym^4(V)$ as the space of ternary quartics. 
There is a natural left
action of ${\rm GL}(V)$ on $\Sym^4(V)$: view $f\in\Sym^4(V)$
as a homogeneous polynomial map from the dual $V^{\vee}$ to $k$ of
degree $4$; 
the action of $A\in\mathrm{GL}(V)$ on $f$ is defined as
$$A\cdot f = f \circ A^t.$$
(Recall: for $B$ a linear map from $V_1$ to $V_2$, the transpose $B^t$
is the natural map from $V_2^{\vee}$ to $V_1^{\vee}$ obtained
by composition with $B$.)

In fact, for our purposes it is better to twist $\Sym^4(V)$ by the
inverse of the determinant, so we consider
$$Q=\Sym^4(V)\otimes{\det}^{-1}(V),$$
the irreducible representation of $\glv$ of highest weight
$(3,-1,-1)$. The point is that $c\cdot {\rm Id}_V$ acts on $Q$ as
$c\cdot {\rm Id}_Q$. Let $U\subset Q$ be the $\glv$-invariant open subset
corresponding to nonsingular projective plane quartics.

\begin{proposition}
There exists an isomorphism of algebraic stacks
$$
[U/\glv] \langepijl{\sim} {\mathcal M}_3^{\rm nh} \, .
$$
\end{proposition}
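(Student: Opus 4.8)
The plan is to exhibit an equivalence of the groupoids of $S$-points, functorially in the test scheme $S$. Recall that an object of $[U/\glv](S)$ is a principal $\glv$-bundle $P \to S$ together with a $\glv$-equivariant morphism $P \to U$; via the standard representation this is the same datum as a rank-$3$ vector bundle $\mathcal{E}$ on $S$ (the bundle associated to $P$) together with a section $q$ of the associated bundle $\Sym^4(\mathcal{E}) \otimes \det^{-1}(\mathcal{E})$ whose value at each point of $S$ defines a nonsingular plane quartic (the open condition cutting out $U$). I would construct functors in both directions and then check that they are mutually quasi-inverse and compatible with base change.

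For the forward functor ${\mathcal M}_3^{\rm nh} \to [U/\glv]$, start from a family $\pi \colon C \to S$ of smooth nonhyperelliptic curves of genus $3$ and form its Hodge bundle $\mathcal{E} = \pi_* \omega_{C/S}$, a rank-$3$ bundle. The relative canonical map embeds $C$ as a family of smooth plane quartics in the $\PP^2$-bundle $\PP(\mathcal{E}^\vee) \to S$, with $\omega_{C/S}$ the restriction of the relative $\mathcal{O}(1)$. Its defining equation is a priori a section of $\mathcal{O}(4)$ twisted by a line bundle pulled back from $S$; adjunction, $\omega_{C/S} \cong (\omega_{\PP(\mathcal{E}^\vee)/S} \otimes \mathcal{O}(4))|_C \otimes \pi^* L$, pins down this twist and identifies the equation with a canonical section $q$ of $\Sym^4(\mathcal{E}) \otimes \det^{-1}(\mathcal{E})$. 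This is exactly why $Q$ is twisted by $\det^{-1}(V)$: the twist rigidifies the projective ambiguity of the quartic equation into an honest, canonically normalized section. Fiberwise smoothness of $C$ places $q$ in $U$.

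The backward functor $[U/\glv] \to {\mathcal M}_3^{\rm nh}$ sends $(\mathcal{E}, q)$ to the zero locus $C = \{q = 0\}$ inside $\PP(\mathcal{E}^\vee) \to S$; fiberwise this is a smooth plane quartic, hence a smooth nonhyperelliptic curve of genus $3$, and the construction is manifestly compatible with base change. That the two functors are mutually inverse rests on the classical theory of the canonical embedding: a nonhyperelliptic curve of genus $3$ is canonically a smooth plane quartic, and conversely every smooth plane quartic arises as the canonical image of such a curve, the embedding being recovered from the curve by $|\omega_C|$. Thus starting from $C$ and taking the zero locus of $q$ returns $C$ up to canonical isomorphism, while starting from $(\mathcal{E}, q)$ one recovers $\pi_* \omega_{C/S} \cong \mathcal{E}$ and, by the same adjunction normalization, the same section $q$.

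The heart of the matter, and the step I expect to require the most care, is the verification on automorphisms, where the $\det^{-1}$ twist is decisive. The automorphisms of an object of $[U/\glv](S)$ over $(\mathcal{E}, q)$ are the $\glv$-automorphisms of $P$ fixing $q$, i.e.\ the $A \in \glv$ with $A \cdot q = q$. Because the center $c \cdot \mathrm{Id}_V$ acts on $Q$ by the scalar $c$ (this is precisely the weight computation $4 - 3 = 1$ of the center on $\Sym^4(V) \otimes \det^{-1}(V)$), no nontrivial homothety fixes a nonzero $q$; hence the stabilizer is the finite group of linear automorphisms of the quartic curve, which is exactly $\mathrm{Aut}(C)$ acting through its faithful representation on $H^0(\omega_C)$. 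Without the twist one would instead obtain a stabilizer larger by the copy $\mu_4 \subset \GG_m$ of fourth roots of unity in the center, so that the quotient would only be a $\mu_4$-gerbe over ${\mathcal M}_3^{\rm nh}$ rather than isomorphic to it; the twist is exactly the device that makes the morphism an isomorphism of stacks. Once automorphisms are matched and base-change compatibility is recorded, the two functors yield the desired isomorphism.
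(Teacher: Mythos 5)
Your proof is correct, but it takes a genuinely different route from the paper's. The paper argues in two short steps: it invokes the standard construction of the coarse space $M_3^{\rm nh}$ as the quotient of $\PP(U)$ by $\mathrm{PGL}(V)$, upgrades this (since the canonical embedding is canonical) to an isomorphism $[\PP(U)/\mathrm{PGL}(V)]\cong{\mathcal M}_3^{\rm nh}$ of stacks, and then identifies the stack quotient of $U$ by the center $\GG_m\subset\glv$ with $\PP(U)$ itself, using exactly the weight-one computation you make ($c\cdot\mathrm{Id}_V$ acts on $Q$ as $c\cdot\mathrm{Id}_Q$, so the center acts freely on $U$); composing, $[U/\glv]\cong[\PP(U)/\mathrm{PGL}(V)]\cong{\mathcal M}_3^{\rm nh}$. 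You instead build the equivalence directly on groupoids of $S$-points: the relative canonical embedding plus the adjunction formula $\omega_{P/S}\cong p^*\det(\mathcal{E})\otimes\mathcal{O}(-3)$ pins the twist of the defining equation to $\det^{-1}$, producing a canonical section $q$ of $\Sym^4(\mathcal{E})\otimes\det^{-1}(\mathcal{E})$, and the weight-one fact enters for you at the level of stabilizers rather than in a quotient-by-the-center step. What the paper's route buys is brevity: beyond the classical $\mathrm{PGL}$-quotient description, the only new input is the observation about the center. What your route buys is self-containedness and extra output: your adjunction argument proves in passing precisely what the paper records separately afterwards (Corollaries \ref{EvsV} and \ref{tauto}, the descent of the diagonal section of $U'\times Q$ to a canonical section of $\Sym^4({\EE}')\otimes\det^{-1}({\EE}')$); it makes explicit the matching of automorphism groups $\mathrm{Stab}_{\glv}(q)\cong\mathrm{Aut}(C)$ acting faithfully on $H^0(\omega_C)$, which is the one place the twist is truly decisive and which the paper leaves implicit in the phrase ``the embedding is canonical''; and it adapts verbatim to the $1$-nodal extension $U'\cong{\mathcal M}_3^{{\rm nh},*}$ treated immediately after, for which the paper only says ``essentially the same proof''. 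Your side remark that the untwisted quotient $[\,\Sym^4(V)^{\rm ns}/\glv]$ would be a $\mu_4$-gerbe over ${\mathcal M}_3^{\rm nh}$ is also correct and is the same phenomenon the paper's center-quotient step circumvents.
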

\begin{proof}
The standard construction of the coarse moduli space $M_3^{\rm nh}$ is as
the quotient of $\PP(U)$ by $\mathrm{PGL}(V)$. Since the embedding
is canonical, the stack quotient $[\PP(U)/\mathrm{PGL}(V)]$ gives
the stack ${\mathcal M}_3^{\rm nh}$. By the above, the stack quotient
of $U$ by the center of $\glv$ may be identified with $\PP(U)$. This gives
the result.
\end{proof}

Actually, we need an extension of this result to an open subset
of $Q$ with complement of codimension at least two. Let $U'\subset Q$
be the $\glv$-invariant open subset of quartics that are either
nonsingular or have one singularity, an ordinary double point.
Correspondingly, let $${\mathcal M}_3^{{\rm nh},*}
=\overline{\mathcal{M}}_3 - \overline{\mathcal{H}}_3
- \Delta_1 - \Delta_{00}$$
be the partial compactification of ${\mathcal M}_3^{\rm nh}$
consisting of nonhyperelliptic stable curves of genus $3$
with at most one node, which is nondisconnecting.
(Here $\Delta_1$ is the boundary component of curves
with a disconnecting node and $\Delta_{00}$ is the closure
of the boundary stratum of irreducible curves with exactly two
nodes.)
Essentially the same proof as above gives:

\begin{proposition}
There exists an isomorphism of algebraic stacks
$$
[U'/\glv] \langepijl{\sim} {\mathcal M}_3^{{\rm nh},*} \, .
$$
\end{proposition}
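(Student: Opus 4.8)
The plan is to follow the proof of the preceding proposition essentially verbatim, the only genuinely new point being the behaviour of the canonical map at the nodal quartics. First I would record the dictionary on closed points. A plane quartic $C$ with exactly one ordinary double point is irreducible and reduced, since distinct components of a plane quartic meet in a scheme of length at least three (Bézout) and would therefore produce additional or worse singularities; hence the normalization of $C$ is a smooth irreducible curve of genus $2$ and the node is nondisconnecting. By adjunction the dualizing sheaf satisfies $\omega_C\cong\mathcal{O}_C(1)$, so $C$ is Gorenstein of arithmetic genus $3$ with $h^0(\omega_C)=3$, it is stable, and the complete linear system $|\omega_C|=|\mathcal{O}_C(1)|$ reproduces the given plane embedding. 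Conversely, a nonhyperelliptic stable curve of genus $3$ with at most one nondisconnecting node is either smooth or irreducible with a single node, and in both cases $\omega_C$ is very ample and embeds $C$ as a (smooth or one-nodal) plane quartic. Here the nonhyperelliptic hypothesis is exactly what excludes the hyperelliptic alternative, in which $|\omega_C|$ is a double cover of a conic. This identifies the closed points of ${\mathcal M}_3^{{\rm nh},*}$ with the $\mathrm{PGL}(V)$-orbits in $\PP(U')$, so that $M_3^{{\rm nh},*}=\PP(U')/\mathrm{PGL}(V)$ as coarse spaces, exactly as for $M_3^{\rm nh}$.

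Next I would promote this to an isomorphism of stacks in the same way. For a family $\pi\colon\mathcal{C}\to S$ of such curves the relative dualizing sheaf $\omega_{\pi}$ is a line bundle, and since $h^0(\omega_{C_s})=3$ is constant, cohomology and base change show that $\pi_*\omega_{\pi}$ is locally free of rank $3$ and commutes with base change. The relative canonical map $\mathcal{C}\to\PP(\pi_*\omega_{\pi})$ is a closed immersion on each fibre by the pointwise analysis above, hence a closed immersion realizing $\mathcal{C}$ fibrewise as a plane quartic lying in $U'$. A local trivialization of $\pi_*\omega_{\pi}$ identifies this family with a map to $U'$, well defined up to the $\glv$-ambiguity in the trivialization, giving a morphism $S\to[U'/\glv]$; the construction is canonical, so it descends to a morphism ${\mathcal M}_3^{{\rm nh},*}\to[U'/\glv]$. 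The universal quartic over $U'$ supplies the inverse, and the two are mutually inverse because both encode the canonical embedding. As in the previous proposition, the determinant twist defining $Q$ makes the centre of $\glv$ act by scalars, so that $[U'/(\text{centre})]=\PP(U')$ and one factors out $\mathrm{PGL}(V)$ afterwards.

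The hard part will be the fibrewise embedding statement at the nodal fibres: one must check that $\omega_{\pi}$ is relatively very ample even where the fibre acquires its node, and that the resulting relative map is a closed immersion in families, not merely fibrewise. I expect to handle this by the fibrewise criterion for a closed immersion together with the adjunction computation $\omega_C\cong\mathcal{O}_C(1)$, which shows that on each fibre $|\omega_{C_s}|$ is complete, base-point free, and separates points and tangents away from the node while presenting the node as an ordinary double point; flatness of $\pi$ and constancy of $h^0$ then upgrade this to the family statement. Everything else is formal and identical to the smooth case treated in the previous proposition, so once the nodal fibres are under control the isomorphism $[U'/\glv]\langepijl{\sim}{\mathcal M}_3^{{\rm nh},*}$ follows.
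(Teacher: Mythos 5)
Your proposal is correct and takes essentially the same route as the paper, which disposes of this case with the single line ``Essentially the same proof as above gives,'' i.e.\ the canonicity of the canonical embedding identifies the stack with $[\PP(U')/\mathrm{PGL}(V)]$ and the determinant twist identifies the quotient of $U'$ by the centre of $\glv$ with $\PP(U')$. Your extra verifications --- that one-nodal quartics are exactly the irreducible one-nodal nonhyperelliptic stable curves with $\omega_C\cong\mathcal{O}_C(1)$ very ample, and that the relative canonical map is a closed immersion in families --- are precisely the points the paper leaves implicit, and your sketch of them is sound.
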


For $f\in U'$,
the elements $x,y,z$ give a basis of the space of 
global sections of the dualizing sheaf
on the quartic in ${\PP}^2$ defined by $f$.  
This globalizes and we obtain the following result.

\begin{corollary}\label{EvsV}
The pullback to $U'$ under the quotient map 
$q\colon U'\to {\mathcal M}_3^{{\rm nh},*}$
of the Hodge bundle $\EE^{\prime}$ is the
$\glv$-equivariant bundle $U'\times V$.
\end{corollary}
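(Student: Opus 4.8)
The plan is to show that the Hodge bundle $\EE'$ on ${\mathcal M}_3^{{\rm nh},*}$, when pulled back along the quotient map $q$, is canonically trivialized by the tautological basis $x,y,z$ of sections of the dualizing sheaf, and that this trivialization is exactly $\glv$-equivariant in the way the statement asserts. The key geometric input is already recorded in the excerpt: for $f \in U'$, the curve $C_f \subset \PP^2$ defined by $f$ is either smooth or has a single non-disconnecting node, so $C_f$ is a reduced plane quartic with at worst a node, hence an integral (in particular Gorenstein) curve of arithmetic genus $3$. For such a canonically embedded curve the adjunction formula gives $\omega_{C_f} \cong {\mathcal O}_{C_f}(1)$, so that $H^0(C_f, \omega_{C_f}) = H^0(\PP^2, {\mathcal O}(1))|_{C_f}$, which is precisely the $3$-dimensional space with basis $x,y,z$.

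First I would set up the universal family over $U'$. The trivial bundle $U' \times \PP^2 \to U'$ carries the universal quartic $\mathcal{C} \subset U' \times \PP^2$ cut out by the universal ternary quartic $f$; its projection $\pi\colon \mathcal{C} \to U'$ is a family of (at worst one-nodal) canonically embedded genus $3$ curves. The sheaf $\pi_* \omega_{\mathcal{C}/U'}$ is the Hodge bundle of this family, and by construction it is the pullback $q^*\EE'$ of the Hodge bundle on ${\mathcal M}_3^{{\rm nh},*}$ under $q$. Applying relative adjunction for $\mathcal{C}$ inside the trivial $\PP^2$-bundle, I would identify $\omega_{\mathcal{C}/U'}$ with the restriction of ${\mathcal O}_{\PP^2}(1)$ (twisted trivially along $U'$, since the family of $\PP^2$'s is constant), and then $\pi_*$ of this is the constant bundle with fiber $H^0(\PP^2,{\mathcal O}(1)) = V$. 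Thus $q^*\EE' \cong U' \times V$ as a vector bundle; the sections $x,y,z$ give the promised global frame.

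The one point requiring care — and the main obstacle — is the equivariance. The isomorphism must intertwine the natural $\glv$-action on $q^*\EE'$ (coming from the action on the family $U'$ and functoriality of $\omega$) with the standard action on $U' \times V$. Here the recipe $A\cdot f = f\circ A^t$ from Section~\ref{Quartics} and the twist $Q = \Sym^4(V)\otimes \det^{-1}(V)$ are essential: one checks that an element $A \in \glv$ sends the curve $C_f$ to $C_{A\cdot f}$ via the linear automorphism of $\PP(V^{\vee})$ induced by $A$, and that the induced map on global $1$-forms is the standard action of $A$ on $V = H^0({\mathcal O}(1))$. The chosen $\det^{-1}$ twist is exactly what makes the scalar $c\cdot{\rm Id}_V$ act compatibly on both sides, so that the identification is genuinely $\glv$-equivariant rather than merely $\slv$- or $\PGL(V)$-equivariant.

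Finally I would invoke the preceding Proposition identifying $[U'/\glv]$ with ${\mathcal M}_3^{{\rm nh},*}$ to transport this statement into the language of stacks: the equivariant trivialization $q^*\EE' \cong U'\times V$ is precisely the content of saying that $\EE'$ descends from the $\glv$-equivariant bundle $U'\times V$. The argument globalizes over $U'$ without difficulty because the one-nodal locus has codimension one and nothing about adjunction degenerates there; flatness of $\pi$ and the constancy of $h^0(\omega_{C_f}) = 3$ over all of $U'$ guarantee that $\pi_*\omega_{\mathcal{C}/U'}$ is a vector bundle of rank $3$ and that base change holds, so the frame $x,y,z$ remains a frame fiberwise everywhere.
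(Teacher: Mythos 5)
Your proposal is correct and takes essentially the same route as the paper, whose entire proof is the remark preceding the corollary: for $f\in U'$ the elements $x,y,z$ give a basis of $H^0$ of the dualizing sheaf of the canonically embedded (at most one-nodal) quartic, and ``this globalizes.'' Your write-up simply makes explicit the details the paper leaves implicit --- the universal family $\mathcal{C}\subset U'\times\PP^2$, relative adjunction and base change, and, most importantly, the role of the $\det^{-1}(V)$-twist in $Q$ in making the trivialization genuinely $\glv$-equivariant (and not merely $\slv$- or $\mathrm{PGL}(V)$-equivariant), which you rightly single out as the one point requiring care.
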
 
Hence the pullback under $q$ of 
$\Sym^4(\EE^{\prime})\otimes\det^{-1}(\EE^{\prime})$ is
the $\glv$-equivariant bundle $U'\times Q$, which admits the diagonal 
section, the universal (at most $1$-nodal) ternary quartic.

\begin{corollary}\label{tauto}
The diagonal section of $U'\times Q$ descends to a canonical section
of $\Sym^4(\EE^{\prime})\otimes\det^{-1}(\EE^{\prime})$ over 
${\mathcal M}_3^{{\rm nh},*}$.
\end{corollary}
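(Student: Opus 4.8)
The plan is to deduce this directly from the equivariant description of the bundle in Corollary~\ref{EvsV}, together with descent along the quotient map $q\colon U'\to{\mathcal M}_3^{{\rm nh},*}$. Recall that for a quotient stack $[U'/\glv]$ a global section of a vector bundle $\mathcal{E}$ on the stack is the same datum as a $\glv$-equivariant section of the pullback $q^*\mathcal{E}$ on $U'$; descent of sections is effective because $q$ is the presentation of ${\mathcal M}_3^{{\rm nh},*}$ as a quotient given in the preceding Proposition. So it suffices to exhibit the universal quartic as a $\glv$-equivariant section of $q^*\bigl(\Sym^4(\EE')\otimes\det^{-1}(\EE')\bigr)$.

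First I would identify this pullback bundle explicitly. By Corollary~\ref{EvsV} the identification $q^*\EE'\cong U'\times V$ is an isomorphism of $\glv$-equivariant bundles, and applying the Schur functor $\Sym^4$ and twisting by $\det^{-1}$ — both functorial for the $\glv$-action — yields a $\glv$-equivariant isomorphism
$$
q^*\bigl(\Sym^4(\EE')\otimes\det^{-1}(\EE')\bigr)\;\cong\;U'\times\bigl(\Sym^4(V)\otimes\det^{-1}(V)\bigr)\;=\;U'\times Q,
$$
where the equivariant structure on the right is the diagonal one, $g\cdot(f,v)=(g\cdot f,\,g\cdot v)$.

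Next I would check equivariance of the diagonal section. Since $U'\subset Q$, we have the tautological section $s\colon U'\to U'\times Q$, $s(f)=(f,f)$, whose second component is the inclusion $U'\hookrightarrow Q$. For $g\in\glv$ one computes $s(g\cdot f)=(g\cdot f,\,g\cdot f)=g\cdot(f,f)=g\cdot s(f)$, so $s$ is $\glv$-equivariant for the diagonal structure above; this is automatic precisely because the $\glv$-action on the base $U'$ and on the fiber $Q$ are one and the same action on $Q$. By descent, $s$ therefore determines a canonical global section of $\Sym^4(\EE')\otimes\det^{-1}(\EE')$ over ${\mathcal M}_3^{{\rm nh},*}$, which by construction restricts over each (at most $1$-nodal) nonhyperelliptic curve to its defining quartic.

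I do not expect a genuine obstacle: the argument is entirely formal once Corollary~\ref{EvsV} is in hand. The only point deserving care is that the equivariant structure in the displayed identification really is the diagonal one, so that \emph{tautological section} and \emph{equivariant section} coincide; this is guaranteed by the $\glv$-equivariance of the isomorphism in Corollary~\ref{EvsV}. One may also remark that the twist by $\det^{-1}$, introduced so that the center of $\glv$ acts on $Q$ by scalars, plays no role in the descent of the section itself; it only ensures that $Q$, and hence the descended bundle, is the intended $\glv$-representation.
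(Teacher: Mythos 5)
Your proposal is correct and matches the paper's argument: the paper likewise deduces the corollary from Corollary~\ref{EvsV} by applying $\Sym^4\otimes\det^{-1}$ to the equivariant identification $q^*\EE'\cong U'\times V$, observing that the resulting bundle is $U'\times Q$ with the diagonal $\glv$-action, and noting that the tautological diagonal section is equivariant and hence descends. Your added remarks (equivariance of the diagonal section being automatic, the twist by $\det^{-1}$ playing no role in the descent itself) only make explicit what the paper leaves implicit.
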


We revisit the notion of concomitants for ternary quartics.
Working with $\GL{3}{\CC}$ instead of $\SL{3}{\CC}$, it is best
to work with $Q$ or its dual $R$, as explained above.
We consider an equivariant
inclusion of $\GL{3}{\CC}$-representations
$$
\varphi\colon A \to \Sym^d (R),
$$
or equivalently
$$
{\CC} \hookrightarrow \Sym^d (R)\otimes A^{\vee}\, ,
$$
where $\CC$ denotes the trivial representation; write $\Phi$ for the
image of $1$ under this map.
If $A=W[\rho]$ is the irreducible representation of highest weight
$\rho$ we call $\Phi$ a concomitant of type $(d,\rho)$.
It can be viewed as a form of degree $d$ in the coordinates $a_i$ of the 
twisted
ternary quartic and of degree $\rho_1-\rho_2$ (resp.\
$\rho_2-\rho_3$, resp.~$\rho_3$) in $x,y,z$ (resp.\ $\hat{x},\hat{y},\hat{z}$,
resp.~$x\wedge y\wedge z$).
Note that $d=\rho_1+\rho_2+\rho_3$, so $d$ is determined by $\rho$
in the $\GL{3}{\CC}$-setting.
Sometimes, we will speak of concomitants of type $\rho$.

E.g.,
for $d=2$ we have the decomposition
$$
\Sym^2(R)=W[6,-2,-2]+W[4,0,-2]+W[2,2,-2]\, ;
$$
compared with the earlier decomposition of $\Sym^2(\Sym^4(W))$,
in each summand all entries have been lowered by $d=2$.

\end{section}
\begin{section}{Invariants, Concomitants and Modular Forms}\label{ICMF}
We consider a scalar-valued Teichm\"uller modular form $F$ of weight $k$, 
that is, a section of ${\det^k({\EE}')}$ 
on $\overline{\mathcal{M}}_3$, cf.~Proposition~\ref{extensiontoM3bar}.
By Corollary~\ref{EvsV},
the pullback of $\det^k({\EE}')$ to $U'$ under $q$
is $\det^k(V)$.
The pullback of $F$ is a section of $\det^k(V)$ over $U'$ that
extends to a section of $\det^k(V)$ over $Q$ (since the complement
of $U'$ has codimension two).
The corresponding irreducible representation of $\glv$ 
has highest weight $(k,k,k)$; it occurs in 
$\Sym^{3k}(Q)$. Thus $F$ defines
an invariant $\iota_F \in I$ of degree $3k$ (for ${\rm SL}(3,{\CC})$).

Conversely, if we have a homogeneous invariant,
necessarily of degree $3k$ with $k\in\NN$, 
it gives a section of $\det^{k}(V)$. It descends to
a holomorphic section of $\det^k(\EE')$ over the open
subset $\mathcal{M}_3^{\rm nh}$
that extends to a meromorphic section over $\M3$.
Since $\chi_9$ vanishes exactly once on the hyperelliptic locus,
we conclude that we get 
injections
$$
{T} \longrightarrow I \longrightarrow {T}_{\chi_{9}}\,  
\eqno(9)
$$
with ${T}$ the graded 
ring of scalar-valued Teichm\"uller modular forms and ${T}_{\chi_9}$
the ring obtained by inverting $\chi_9$.
Under the first map above, the modular form $\chi_9$ maps to (a nonzero multiple of) 
the discriminant, an invariant of degree $27$.
(In the recent paper \cite{LerRit}, 
Lercier and Ritzenthaler study the relation between
scalar-valued Siegel modular forms and invariants of ternary quartics
in detail. Their approach differs from ours.)

We now generalize this to concomitants for $\GL{3}{\CC}$. 
A concomitant $\Phi$ of type $(d, \rho)$ gives an equivariant section
of $V_{\rho}$.
It descends to a holomorphic section  of
${\EE}^{\prime}_{\rho}$
on ${\mathcal M}_3^{\rm nh}$ that extends to a meromorphic section
$\gamma(\Phi)$ on $\M3$.
After multiplication with the $r$th
power of  $\chi_9$ for $r$ large enough, it becomes
a holomorphic section of 
${\EE}^{'}_{\rho}\otimes \det^{9r}({\EE}')$
(as we shall see $r=d$ suffices).
Conversely,
if  $F$ is a Teichm\"uller modular form of weight $\rho$, 
that is, a section of ${\EE}^{\prime}_{\rho}$, then by pulling it back 
from ${\mathcal M}_3^{{\rm nh},*}$
to $U'$ we get an equivariant section of 
$V_{\rho}$ that extends to $Q$: a concomitant $\beta(F)$ of type $\rho$.

We thus find a commutative diagram 
$$
\begin{xy}
\xymatrix{
\Sigma \ar[r]^{\beta} & C \ar[r]^{\gamma} & \Sigma_{\chi_{9}} \\
{T} \ar[u] \ar[r] & I \ar[u] \ar[r] & {T}_{\chi_{9}}\ar[u] \\
}\end{xy} 
\eqno(10)
$$
where $\Sigma=\oplus_{\rho} T_{\rho}$ (with $\rho$ running over
the irreducible representations of ${\rm GL}(3,\CC)$)
is the ${T}$-module of vector-valued Teichm\"uller modular forms, 
$\Sigma_{\chi_9}$ denotes the module obtained by inverting $\chi_9$
and $C$ is the $I$-module of concomitants.
A modular form $F$ of weight $(i,j,k)$ is sent to a concomitant of 
type 
$$
(d,\rho)=(i+2j+3k,[i+j+k,j+k,k])
$$ and a concomitant of
type $(d,\rho)$ to a (meromorphic) form of weight 
$$(\rho_1-\rho_2,\rho_2-\rho_3, \rho_3).$$
Note that the parity of the form equals the parity
of the degree $d$ of the concomitant.
 
A basic observation is the following.

\begin{proposition} \label{basic}
The image of the universal ternary quartic $f \in C$ under $\gamma$ equals 
up to a nonzero multiplicative constant 
the meromorphic Teichm\"uller modular form 
$$
\chi_{4,0,-1}=\chi_{4,0,8}/\chi_9\, .
$$
\end{proposition}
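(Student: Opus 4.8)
The plan is to compare two sections of $\EE'_{\rho}$ (where $\rho=[3,-1,-1]$, so $w(\rho)=(4,0,-1)$) over the open locus $\M3^{\rm nh}$ and to show they agree up to a scalar there; since $\M3^{\rm nh}$ is dense, this forces equality of the associated meromorphic forms on all of $\M3$. First I would recall that, by Corollaries~\ref{EvsV} and~\ref{tauto}, the universal ternary quartic is precisely the tautological diagonal section of $\Sym^4(\EE')\otimes\det^{-1}(\EE')$ over $\M3^{\rm nh,*}$: for a nonhyperelliptic curve $Y$, its value in the fiber is the coefficient vector of the quartic cutting out the canonical image of $Y$ in $\PP^2$, expressed in the basis $x,y,z$ of $H^0(Y,\omega_Y)$. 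Under the map $\gamma$ of diagram~(10), $f$ therefore descends to the meromorphic Teichm\"uller form whose fiberwise value at $[Y]$ is this canonical quartic.

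Next I would invoke Proposition~\ref{Frobeniusphi0}. There we saw that the Frobenius function $\phi(\tau,z)$ defines the holomorphic Siegel modular cusp form $\chi_{4,0,8}$, a section of $\Sym^4(\EE)\otimes\det^8(\EE)$, and that for a nonhyperelliptic Jacobian $\mathrm{Jac}(Y)$ \emph{the naturally associated vector of length $15$ gives the coefficients of the quartic defining the canonical image of $Y$}. Thus, over the nonhyperelliptic locus, the two sections in question---namely $\gamma(f)$ twisted by $\det^9(\EE')$, and the pullback of $\chi_{4,0,8}$---both produce, in each fiber, the same canonical quartic, differing only by the line-bundle normalization encoded in the two weights $(4,0,-1)$ and $(4,0,8)$. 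The weight discrepancy is exactly $(0,0,9)$, which is accounted for by $\chi_9$: since $\chi_9$ is a nonvanishing section of $\det^9(\EE')$ away from the hyperelliptic locus (by its divisor formula~(7)), dividing by $\chi_9$ shifts the weight by $-(0,0,9)$ and converts the holomorphic $\chi_{4,0,8}$ into a meromorphic form of weight $(4,0,-1)$.

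I would then conclude as follows. On $\M3^{\rm nh}$ both $\gamma(f)$ and $\chi_{4,0,8}/\chi_9$ are sections of $\EE'_{[3,-1,-1]}$ whose fiberwise values coincide with the canonical quartic of $Y$ up to a single overall scalar independent of $Y$; the scalar is constant because it is the ratio of two global sections that are everywhere proportional on an irreducible variety. Hence $\gamma(f)=c\cdot \chi_{4,0,8}/\chi_9=c\cdot\chi_{4,0,-1}$ on $\M3^{\rm nh}$ for some nonzero $c\in\CC^{\times}$, and by density this identity of meromorphic forms holds on all of $\M3$. The main obstacle---and the point requiring genuine care rather than formalism---is matching the two geometric descriptions of \emph{the same} canonical quartic: verifying that the length-$15$ vector extracted from the degree-$4$ normal-direction expansion of Frobenius's $\phi$ in Proposition~\ref{Frobeniusphi0} is identified, under the trivializations of $\EE'$ used in Corollary~\ref{EvsV} (where $x,y,z$ are the global differentials), with the coefficient vector of the tautological quartic from Corollary~\ref{tauto}. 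Both are canonically attached to $Y$ via its canonical embedding, so they can only differ by a scalar; pinning down that they agree (and are not, say, related by some automorphism of the representation $\Sym^4(V)\otimes\det^{-1}(V)$) is what makes the constant $c$ well-defined and nonzero.
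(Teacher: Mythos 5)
Your proposal is correct and follows essentially the same route as the paper: both rest on Proposition~\ref{Frobeniusphi0} to see that $\gamma(f)$ and $\chi_{4,0,8}$ cut out the same canonical quartic fiberwise over ${\mathcal M}_3^{\rm nh}$, so their ratio is nowhere vanishing there, and the weight discrepancy $(0,0,9)$ together with the divisor of $\chi_9$ forces $\gamma(f)=c\cdot\chi_{4,0,8}/\chi_9$. The one soft spot is your justification that the fiberwise scalar is constant ``because the variety is irreducible''---a nowhere vanishing holomorphic function on a quasi-projective variety need not be constant; the paper instead treats the ratio as a meromorphic scalar form of weight $(0,0,9)$, nowhere vanishing on the nonhyperelliptic locus and hence (by the divisor formula~(7) for $\chi_9$) a constant times a power of $\chi_9$, with the weight pinning that power to exactly one.
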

\begin{proof}
The tautological $f$ defines an element $\gamma(f)$ of $\Sigma_{\chi_9}$, a meromorphic
Teichm\"uller modular form  of weight $(4,0,-1)$. In order to identify it we
consider the Siegel (or Teichm\"uller) modular form $\chi_{4,0,8}$. 
By Proposition~\ref{Frobeniusphi0} we know that on the open set 
${\mathcal M}_3^{\rm nh}$ the two sections
$\gamma(f)$ and $\chi_{4,0,8}$ differ by a nowhere vanishing holomorphic
function. Therefore they differ by a power of $\chi_9$.
Since the weights are $(4,0,-1)$ and $(4,0,8)$, it follows that up to a
nonzero multiplicative constant we have $\gamma(f)=\chi_{4,0,8}/\chi_9$.
\end{proof}

Then the map $\gamma: C \to \Sigma_{\chi_{9}}$ can be written as
a substitution 
$$
c \mapsto c \circ \chi_{4,0,-1} 
$$
in the following sense.
Recall that we write our ternary quartic as $f=\sum_I n_I a_I x^I$ where
$I$ runs over the indices $(i_1,i_2,i_3)$ with $i_1+i_2+i_3=4$, 
$n_I= 4!/(i_1!i_2!i_3!)$ and 
$x^I=x^{i_1}y^{i_2}z^{i_3}$. 
We write $\chi_{4,0,8}$ in a similar way as a vector
$$
\chi_{4,0,8}= \sum_{I} n_I\alpha_I x^I\, . \eqno(11)
$$
Then each $\alpha_I$ is a holomorphic function on $\mathfrak{H}_3$ 
or also on Teichm\"uller space ${\mathcal T}_3$.
The map $\gamma$ is given by substituting 
in a concomitant
the meromorphic functions
$\alpha_I/\chi_9$ on ${\mathcal T}_3$
for the $a_I$.

Sometimes we prefer to work with holomorphic modular forms.
Then we don't use
the map $\gamma$, but a slightly adapted map $\gamma'$
that maps the tautological $f$ to $\gamma(f)\chi_9$ and that is defined by
 substituting in a concomitant $c$ of degree $d$ the 
$\alpha_I$ 
for the $a_I$. The result is a holomorphic vector-valued
modular form 
$$\gamma^{\prime}(c)=\gamma(c)\chi_9^d=c\circ \chi_{4,0,8}\,.
$$ 
For a concomitant of type $(d,\rho)$ it is a form of weight 
$(\rho_1-\rho_2,\rho_2-\rho_3,\rho_3+9d)$, i.e.,
a section of ${\EE}_{\rho} \otimes \det^{9d}{\EE}$.
We can calculate the Fourier expansion of 
$\gamma'(c)$ from the Fourier expansion of $\chi_{4,0,8}$.

Depending on the parity of $\rho$, the form $\gamma(c)$
is a (meromorphic) Siegel or Teichm\"uller modular form,
but $\gamma'(c)$ 
is (the pullback of) a holomorphic Siegel modular form.
If it vanishes along the
hyperelliptic locus, we can divide it by $\chi_{18}\,$. 
Note that the order of divisibility by $\chi_{18}$ 
is bounded above by $1/2$ of the order along $\delta_0$
and also by $1/6$ of the order along $\delta_1$.

\end{section}
\begin{section}{The Order along the Locus of Double Conics}
Inside the space ${\PP}^{14}$ of ternary quartics there is the locus $DC$ 
of double conics. 
The order of a concomitant $c$ of ternary quartics along the locus $DC$
is determined as the order in the parameter $t\in\CC$ of the evaluation
of $c$ on the quartic $tf+g^2$, where $f$ is a general ternary quartic
and $g$ a general ternary conic.

\begin{proposition}	\label{orderDC}
Let $c$ be a concomitant of degree $d$ vanishing with order $v$ along
the locus of double conics. 
Then the order of $\gamma(c)$ along the hyperelliptic locus
$\Hy3\subset\M3$ equals $2v-d$. Hence the order
of $\gamma'(c)$ along $\Hy3$ equals $2v$ and the
corresponding Siegel modular form has order $v$ along $H\subset\A3$.
\end{proposition}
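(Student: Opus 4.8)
The plan is to funnel everything into one normal-expansion computation on $\A3$ and then transport the result to $\M3$ using the two universal facts at hand: ${\rm ord}_{\Hy3}\chi_9=1$ (see~(7)) and the ramification of the Torelli map along the hyperelliptic locus. By the discussion surrounding Proposition~\ref{basic}, the form $\gamma'(c)=c\circ\chi_{4,0,8}$ is the pullback of a holomorphic Siegel modular form $F$ on $\A3$, and $\gamma(c)=\gamma'(c)/\chi_9^{\,d}$. Because $t^{*}\chi_{18}=\chi_9^{2}$ while ${\rm ord}_H\chi_{18}=1$, the Torelli pullback doubles orders along the hyperelliptic locus, i.e.\ ${\rm ord}_{\Hy3}(t^{*}F)=2\,{\rm ord}_H F$. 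Hence all three assertions will follow from the single equality
$$
{\rm ord}_H F=v,
$$
with $F=c\circ\chi_{4,0,8}$ viewed on $\A3$; indeed then ${\rm ord}_{\Hy3}\gamma'(c)=2v$ and, since ${\rm ord}_{\Hy3}\chi_9^{\,d}=d$, also ${\rm ord}_{\Hy3}\gamma(c)=2v-d$.

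To compute ${\rm ord}_H F$ I use Proposition~\ref{Frobeniusphi0}: the fifteen coordinates of $\chi_{4,0,8}$ are the coefficients of the quartic cutting out the canonical image. On the hyperelliptic locus the canonical map is $2:1$ onto a smooth conic, so that image counted with multiplicity is the double conic, i.e.\ $\chi_{4,0,8}|_H=g^2\in DC$ (consistent with ${\rm ord}_H\chi_{4,0,8}=0$). Choosing a local coordinate $s$ transverse to $H$, I expand
$$
\chi_{4,0,8}=g^2+s\,h+O(s^2),
$$
with $h$ a quartic. Since $c$ vanishes to order $v$ along $DC$ it lies in $I_{DC}^{\,v}$ (but not $I_{DC}^{\,v+1}$) at the generic point of $DC$; as $\chi_{4,0,8}|_H\in DC$ this forces ${\rm ord}_H F\ge v$, and the coefficient of $s^{v}$ equals $\bar c(\bar h)$, where $\bar c\neq0$ is the leading degree-$v$ normal form of $c$ along $DC$ and $\bar h$ is the class of $h$ in the normal space $N_{g^2}DC$. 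Thus ${\rm ord}_H F=v$ reduces to showing $\bar c(\bar h)\neq0$ at a general hyperelliptic point.

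The crux is to identify the normal direction $\bar h$. The tangent space to $DC$ at $g^2$ consists of the quartics $g\cdot q$ with $q$ a conic, so restriction to $C_0=\{g=0\}$ yields a canonical isomorphism $N_{g^2}DC\cong H^0(C_0,\mathcal{O}_{C_0}(4))$, that is, with degree-$8$ divisors on $C_0\cong\PP^1$. I claim $\bar h$ is exactly the branch divisor of the hyperelliptic curve: rewriting the nearby smooth quartic as $g^{2}=-s\,h$, i.e.\ $g=\pm\sqrt{-s\,h}$, exhibits it near $C_0$ as a double cover branched where $h|_{C_0}$ vanishes, and these eight points are the Weierstrass points. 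Granting this, at a general point of $H$ the eight branch points are general, so, using the freedom to reframe $\EE$ by $\mathrm{Aut}(C_0)\cong\mathrm{PGL}_2$, the class $\bar h$ is general in $N_{g^2}DC$; since $\bar c\not\equiv0$ vanishes only on a proper subvariety, $\bar c(\bar h)\neq0$ generically and therefore ${\rm ord}_H F=v$.

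The main obstacle is precisely the claim that the first-order normal term $\bar h$ is the branch divisor. I would prove it by differentiating the explicit theta/Frobenius description of $\chi_{4,0,8}$ from Section~\ref{chi408} (or the function $\phi(\tau,z)$) in a direction transverse to $H$ at a hyperelliptic period matrix: on $H$ the relevant theta data degenerate so that the leading term is the double conic $g^2$, and the $s$-derivative identifies $h|_{C_0}$ with the product of the eight linear forms through the Weierstrass points. A more conceptual alternative is to invoke the infinitesimal structure of the period map along $\Hy3$, identifying the normal bundle with the deformations detected by the Weierstrass points and hence with $H^0(C_0,\mathcal{O}_{C_0}(4))$ under the conic embedding. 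Either route pins down $\bar h$, after which the genericity statement and the order count are formal.
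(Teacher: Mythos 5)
Your architecture is sound and genuinely different from the paper's: you propose to compute the order directly on $\A3$ by expanding $\chi_{4,0,8}$ to first order transversally to $H$ (using that its restriction to $H$ is a double conic $g^2$) and to transport the answer to $\M3$ via the doubling of orders under the ramified Torelli map. The paper never goes near the period map: after multiplying by $f$ or $f^2$ so that $d=3e$, it divides by the $e$-th power of the degree-$3$ invariant $A$, which does not vanish along $DC$, so that $\gamma(A)$ has a pole of exact order $3$ along $\Hy3$; this reduces everything to a degree-$0$ meromorphic concomitant whose components are $\PGL{3}{\CC}$-invariant rational functions. These descend to the coarse space $M_3$, constructed as the $\PGL{3}{\CC}$-quotient of the blowup of $\PP^{14}$ along $DC$ (minus the proper transform of the discriminant). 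In that picture, ``order along the exceptional divisor $=$ order along $DC$'' is automatic for pulled-back functions, and the factor $2$ is exactly the discrepancy between the stack $\M3$ and $M_3$ along the hyperelliptic locus. No transversality statement or branch-divisor identification is ever required.

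That contrast exposes the genuine gap in your proposal, which you flag yourself: the identification of the first-order normal term $\bar h$ with the branch divisor --- or even the weaker facts that $\bar h$ is nonzero in $N_{g^2}DC$ and avoids the zero locus of $\bar c$ for the generic hyperelliptic curve --- is asserted with two sketched strategies but proved by neither. This is not a routine verification: differentiating the theta-theoretic description of $\chi_{4,0,8}$ (the degree-$16$ equation $F$ and the Frobenius function $\phi$) at a hyperelliptic period point, or setting up the infinitesimal period map along $\Hy3$, is each a substantial computation, comparable in weight to the proposition itself. Without it, your expansion yields only ${\rm ord}_H(c\circ\chi_{4,0,8})\ge v$, hence ${\rm ord}_{\Hy3}\gamma(c)\ge 2v-d$; the reverse inequality is exactly what is missing, and it is the half that Theorem~\ref{the_isom} later needs (surjectivity of $\varphi$ via regularity of $\beta(F)/\Xi^n$). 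A smaller unproved ingredient is that $\chi_{4,0,8}|_H$ is the \emph{double} of the canonical conic rather than merely a nonzero quartic --- this is classical via Frobenius's tangent-cone description, and the paper uses it implicitly too (it is what makes $A$ nonzero at the polar coefficient of $\chi_{4,0,-1}$), so I weigh it less heavily. Granting the branch-divisor claim, your genericity argument does close: octics with distinct roots are dense in the fiber $N_{g^2}DC\cong H^0(\PP^1,\mathcal{O}(8))$, and $\GL{3}{\CC}$-equivariance reduces nonvanishing of $\bar c$ to a single smooth double conic. So: the right geometric picture and a legitimately different route, but the decisive step is a placeholder --- and the paper's invariant-theoretic descent to the blowup model of $M_3$ is precisely the device that makes that step unnecessary.
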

\begin{proof}
The proof is completely analogous to that of Theorem~1 in \cite{CFvdG2},
so we only mention the necessary changes.
The meromorphic form $\gamma(c)$ is obtained by substituting
$\chi_{4,0,-1}$ in $c$ and $\chi_{4,0,-1}$ has a simple pole along
$\Hy3$, so the result holds for the universal $f$ (with $d=1$ and $v=0$). 
After multiplying with $f$ or $f^2$ if necessary,
we may assume that $d$ is divisible by three, equal to $3e$.
We now let $A$ be the invariant of degree $3$:
$$
A=
a_0a_{10}a_{14}-4a_0a_{11}a_{13}+3a_0a_{12}^2-4a_1a_6a_{14}+{12}a_1a_7a_{13}-{12}a_1a_8a_{12}
+\dots\,\,.
$$
One easily checks that $A$ doesn't vanish along $DC$, so $\gamma(A)$
has a pole of order $3$ along $\Hy3$. We write $\gamma(c)$
as $\gamma(c/A^e)\cdot \gamma(A)^e$, where $c/A^e$ is a meromorphic
concomitant of degree $0$. Its components are meromorphic functions
that descend to the components of $\gamma(c/A^e)$.
Now recall that the coarse moduli space $M_3$ may be constructed
by blowing up the locus of double conics in the projective space
of ternary quartics, deleting the proper transform of the
discriminant hypersurface, and taking the quotient by 
$\PGL3{\CC}$. The order of $\gamma(c/A^e)$ along $\Hy3$ equals
twice the order of $c/A^e$ along the exceptional divisor; this reflects
the difference between the stack $\M3$ and $M_3$. On the other hand,
the order along the exceptional divisor equals the order along $DC$.
This proves the first result. The other results are immediate
consequences.
\end{proof}

\begin{corollary}
Same hypothesis. If $d$ is even, then the order along $H$
of the meromorphic Siegel modular form corresponding
to $\gamma(c)$ equals $v-d/2$.
If $d$ is odd, then the order along $H$
of the meromorphic Siegel modular form corresponding
to $\chi_9\gamma(c)$ equals $v-(d-1)/2$.
\end{corollary}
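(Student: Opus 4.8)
The plan is to deduce both assertions from Proposition~\ref{orderDC} by elementary bookkeeping. That proposition records two facts we shall reuse: the order of $\gamma(c)$ along $\Hy3\subset\M3$ equals $2v-d$, and the passage from the order along $\Hy3$ of a Teichm\"uller form pulled back from $\A3$ to the order along $H$ of the corresponding Siegel modular form divides by two. The latter is the factor of two coming from the order-two ramification of the Torelli morphism $t\colon\M3\to\A3$ along the hyperelliptic locus; it is exactly what is used in Proposition~\ref{orderDC} to pass from $\mathrm{ord}_{\Hy3}\gamma'(c)=2v$ to $\mathrm{ord}_{H}=v$. It remains only to apply this division by two to the correct even form in each parity.

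Suppose first that $d$ is even. Then $\gamma(c)$ is an even meromorphic Teichm\"uller modular form, hence the pullback of a meromorphic Siegel modular form $G$; this is the form corresponding to $\gamma(c)$. Since $\mathrm{ord}_{\Hy3}\gamma(c)=2v-d$ is even, dividing by two yields $\mathrm{ord}_{H}(G)=(2v-d)/2=v-d/2$, as claimed.

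Now suppose that $d$ is odd. Then $\gamma(c)$ is odd and is not pulled back from $\A3$, but $\chi_9\,\gamma(c)$ is even and therefore descends to a meromorphic Siegel modular form $G'$. Because $\chi_9$ vanishes with multiplicity one along $\Hy3$, we have $\mathrm{ord}_{\Hy3}(\chi_9\,\gamma(c))=1+(2v-d)=2v-(d-1)$, which is even; dividing by two gives $\mathrm{ord}_{H}(G')=v-(d-1)/2$, as claimed. No genuine obstacle arises here: once Proposition~\ref{orderDC} and its factor of two are in hand, the argument is pure bookkeeping with the parity of $d$ and with the single zero of $\chi_9$ along $\Hy3$. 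The one point requiring care is to multiply by $\chi_9$ in the odd case, so that one divides by two the order of an actual pullback from $\A3$, rather than that of the odd form $\gamma(c)$, which does not descend.
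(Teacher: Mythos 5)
Your proof is correct and is precisely the argument the paper intends: the corollary is stated as an immediate consequence of Proposition~\ref{orderDC}, and your bookkeeping---$\mathrm{ord}_{\Hy3}\gamma(c)=2v-d$, the simple zero of $\chi_9$ along $\Hy3$ in the odd case, and division by two coming from the order-two ramification of the Torelli map along the hyperelliptic locus (the same factor that turns $\mathrm{ord}_{\Hy3}\gamma'(c)=2v$ into order $v$ along $H$)---is exactly the intended deduction. You also correctly identify the one point of care, namely that in the odd case one must first multiply by $\chi_9$ so that the form descends to $\A3$ before halving the order.
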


\begin{corollary}
Let $F$ be a Teichm\"uller modular form of type $\rho$
that has order $m$ along $\Hy3$. Then $\beta(F)$ is
a concomitant of type $\rho$, so of degree
$d=\rho_1+\rho_2+\rho_3$, that has order $\tfrac12(m+d)$
along the locus of double conics. In particular, $m$ and
$d$ have the same parity (which equals that of $F$).
\end{corollary}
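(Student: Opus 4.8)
The plan is to read this corollary as the exact inverse of Proposition~\ref{orderDC}, exploiting the fact that $\beta$ and $\gamma$ undo one another on the nonhyperelliptic locus. The one point requiring genuine care is the identity $\gamma(\beta(F))=F$; once that is secured, the conclusion is pure bookkeeping on orders. So first I would verify that $\gamma\circ\beta$ is, up to a nonzero scalar, the natural inclusion $\Sigma\hookrightarrow\Sigma_{\chi_9}$, so that $\gamma(\beta(F))$ and $F$ agree as meromorphic sections of ${\EE}'_{\rho}$ over $\M3$. Both maps restrict over $\M3^{\rm nh}$ to the tautological correspondence between $\glv$-equivariant sections of $V_{\rho}$ over $U'$ and sections of ${\EE}'_{\rho}$ over the quotient, as recorded in diagram~(10): the map $\beta$ pulls $F$ back to a concomitant, while $\gamma$ substitutes the components of $\chi_{4,0,-1}$ back in for the $a_I$. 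Since the two constructions are inverse on the open part, $\gamma(\beta(F))$ and $F$ coincide there, hence as meromorphic sections everywhere; in particular they have the same order along $\Hy3$.

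Next I would set $c=\beta(F)$, a concomitant of type $\rho$ and therefore of degree $d=\rho_1+\rho_2+\rho_3$, and let $v$ denote its order of vanishing along the locus $DC$ of double conics. Proposition~\ref{orderDC} then states that $\gamma(c)$ has order $2v-d$ along $\Hy3$. By the previous step $\gamma(c)$ has order $m$ along $\Hy3$, so equating the two values gives $m=2v-d$, that is, $v=\tfrac12(m+d)$, which is the first assertion. For the final remark, note that $c$ is a genuine (polynomial) concomitant, so its order $v$ along $DC$ is a nonnegative integer; hence $m+d=2v$ is even and $m\equiv d\pmod 2$. As noted after diagram~(10), the parity of $d=\rho_1+\rho_2+\rho_3$ is precisely the parity of $F$, so the common parity of $m$ and $d$ is that of $F$, completing the argument.

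I expect the only real obstacle to be the first step, namely confirming that $\beta$ followed by $\gamma$ returns the original form rather than a form differing along a boundary divisor. This is essentially definitional, but it does require checking that the extension-then-restriction procedure defining $\beta$ and $\gamma$ does not alter the form away from $\M3^{\rm nh}$; this follows because the complements of the open loci involved have codimension at least two and because all the orders in play are read off already on $\M3^{\rm nh}$, where the two maps are manifestly mutually inverse.
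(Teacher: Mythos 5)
Your proposal is correct and follows essentially the same route as the paper: both rest on the observation that $\gamma\circ\beta$ is the identity onto $\Sigma\subset\Sigma_{\chi_9}$, combined with the order formula $2v-d$ from Proposition~\ref{orderDC}. The only (harmless) variation is in the parity claim, which you derive cleanly from the integrality of the order $v$ of a polynomial concomitant along $DC$, whereas the paper notes it was already known, citing the multiplicity~$1$ of $\chi_9$ along $\Hy3$ in the odd case.
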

\begin{proof}
This follows from the proposition, since the composition
$\gamma\circ\beta$ of the maps in diagram (10) above
is the identity map onto $\Sigma\subset\Sigma_{\chi_9}$.
That the parities agree was already known: it is obvious
in the even case and follows in the odd case from the fact
that $\chi_9$ has multiplicity $1$ along $\Hy3$.
\end{proof}

\begin{example}
Let $\Xi$ be the discriminant of ternary quartics,
an invariant of degree $27$. Then $\gamma(\Xi)=\chi_9$
and $\chi_9\gamma(\Xi)=\chi_{18}$. We know that
$\chi_{18}$ has order $1$ along $H$ and find that
$v=14$, in agreement with a result of Aluffi-Cukierman \cite{A-C}.
\end{example}

\begin{notation}
Let $m\ge0$ be an integer. We denote by
$C_{d,\rho}(-m\, DC)$ the vector space of concomitants of type $(d,\rho)$
that have order $\geq m$ along the locus $DC$ of
double conics.

We denote by $S_{i,j,k}^m$ the vector space of Siegel modular forms
of type $(i,j,k)$ that have order $\geq m$ along the
boundary divisor $D$. So $S_{i,j,k}^0=M_{i,j,k}$ and
$S_{i,j,k}^1=S_{i,j,k}$.

These notations also make sense when $m<0$, but in most cases this doesn't give
anything new, since we consider regular or holomorphic concomitants
and modular forms here. However, see Corollary~\ref{chi18b} below.
\end{notation}
\begin{theorem} \label{the_isom}
Notation as above. There exists an isomorphism
$$
\varphi\colon C_{d,\rho}(-mDC) \buildrel\sim\over\longrightarrow 
S_{\rho_1-\rho_2, \rho_2-\rho_3,\rho_3+9n}^n \, ,
\qquad c \mapsto \gamma(c)\chi_9^n \, ,
$$
where $n=d-2m$.
\end{theorem}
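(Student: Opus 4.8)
The plan is to verify that $\varphi$ is a well-defined linear map with image in $S_{\rho_1-\rho_2,\rho_2-\rho_3,\rho_3+9n}^n$, then that it is injective, and finally that it is surjective by producing an explicit inverse built from the map $\beta$ of diagram~(10). Granting Proposition~\ref{orderDC}, every step reduces to bookkeeping of orders along the two codimension-one boundary divisors, using ${\rm div}(\chi_{18})=H+2D$ from~(3), ${\rm div}(\chi_9)=h+\delta_0+3\delta_1$ from~(7), and the fact recorded in the table of \S\ref{TMF} that every coordinate of $\chi_{4,0,8}$ vanishes along $D$ (i.e. ${\rm ord}_D\chi_{4,0,8}=1$).

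For well-definedness, the weight is immediate from the definitions in \S\ref{ICMF}, and since $c$ has degree $d$ and $n=d-2m$ the form $\gamma(c)\chi_9^n$ has even parity $d+n\equiv 0\pmod 2$, hence is the pullback of a genuine Siegel modular form (\S\ref{TMF}). I would then write $\gamma(c)\chi_9^n=\gamma'(c)/\chi_{18}^m$, with $\gamma'(c)=c\circ\chi_{4,0,8}$ holomorphic, and bound the orders of the numerator. As $\gamma'(c)$ is homogeneous of degree $d$ in the coordinates of $\chi_{4,0,8}$, each of which vanishes along $D$, we get ${\rm ord}_D\gamma'(c)\ge d$, so dividing by $\chi_{18}^m$ (order $2$ along $D$) leaves order $\ge d-2m=n$ there; and writing $v={\rm ord}_{DC}(c)\ge m$, Proposition~\ref{orderDC} gives ${\rm ord}_H\gamma'(c)=v$, so the quotient is holomorphic along $H$ as well. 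Since $\chi_{18}$ vanishes in codimension one only along $H$ and $D$ and $\gamma'(c)$ is holomorphic, normality of $\tilde{\mathcal A}_3$ forces $\gamma'(c)/\chi_{18}^m$ to be everywhere holomorphic; the codimension-two loci such as $\mathcal{A}_{2,1}$ are harmless by Hartogs. Thus $\varphi(c)\in S_{\rho_1-\rho_2,\rho_2-\rho_3,\rho_3+9n}^n$.

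Injectivity is clear: if $\gamma(c)\chi_9^n=0$ then $\gamma(c)=0$, and $\gamma$ is injective because a concomitant vanishing on the tautological quartic over the dense open $U'\subset Q$ vanishes identically (equivalently, $\gamma\circ\beta={\rm id}$). For surjectivity I would reverse the construction. Given $F\in S_{\rho_1-\rho_2,\rho_2-\rho_3,\rho_3+9n}^n$, consider the meromorphic Teichm\"uller form $F/\chi_9^n$ of type $\rho$ and degree $d$. Its only possible codimension-one pole is along $\Hy3$: the hypothesis ${\rm ord}_D F\ge n$ together with ${\rm div}(\chi_9)=h+\delta_0+3\delta_1$ shows $F/\chi_9^n$ is regular along $\delta_0$, hence on ${\mathcal M}_3^{{\rm nh},*}$, so $c:=\beta(F/\chi_9^n)$ is a genuine concomitant of type $(d,\rho)$. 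Then $\gamma\circ\beta={\rm id}$ gives $\gamma(c)\chi_9^n=F$, and since $F$ is holomorphic we have ${\rm ord}_{\Hy3}(F/\chi_9^n)\ge -n$, whence Proposition~\ref{orderDC} yields ${\rm ord}_{DC}(c)=\tfrac12\big({\rm ord}_{\Hy3}\gamma(c)+d\big)\ge\tfrac12(d-n)=m$. Thus $c\in C_{d,\rho}(-mDC)$ and $\varphi(c)=F$.

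The essential content is Proposition~\ref{orderDC}; what remains is the holomorphy bookkeeping, and that is where I expect the care to lie. The two delicate points are confirming that $\gamma'(c)/\chi_{18}^m$ has no codimension-one pole and that the candidate inverse $\beta(F/\chi_9^n)$ is a genuine polynomial concomitant rather than merely a meromorphic section. Both amount to checking that the pole orders along $\Hy3$ and $\delta_0$ dictated by (3), (7) and Proposition~\ref{orderDC} are exactly compensated, so that $\gamma$ and $\beta$ restrict to mutually inverse maps on these graded pieces.
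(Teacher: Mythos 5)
Your proof is correct and takes essentially the same route as the paper's: well-definedness via Proposition~\ref{orderDC} (your factorization $\gamma(c)\chi_9^n=\gamma'(c)/\chi_{18}^m$ is just the paper's order bookkeeping made explicit), injectivity of $\gamma$, and surjectivity by applying $\beta$ and dividing by the discriminant power, your $\beta(F/\chi_9^n)$ being the paper's $\beta(F)/\Xi^n$ with the same two order checks (at least $m$ along $DC$, regularity along the discriminant, i.e.\ along $\delta_0$). The one small slip---asserting the only possible codimension-one pole of $F/\chi_9^n$ is along $\Hy3$, whereas a pole along $\delta_1$ is also a priori possible---is harmless, since $\Delta_1$ is excluded from ${\mathcal M}_3^{{\rm nh},*}$ and your argument only uses regularity there.
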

\begin{proof}
Let $c\in C_{d,\rho}(-mDC)$.
By Proposition~\ref{orderDC}, the order along $\Hy3$ of $\gamma(c)$ is
at least $2m-d$. So $\varphi(c)$ is regular along $\Hy3$. Since $d$ and $n$
have the same parity, $\varphi(c)$ is (the pullback of) 
a Siegel modular form. It follows
immediately that $\varphi(c)\in S_{\rho_1-\rho_2, \rho_2-\rho_3,\rho_3+9n}^n
\,$. Now $\varphi$ is certainly injective, but also surjective: 
take $F\in S_{\rho_1-\rho_2, \rho_2-\rho_3,\rho_3+9n}^n\,$, 
then $\beta(F)/\Xi^n$ is a meromorphic concomitant that has order
at least $m$ along $DC$ and order at least $0$ along the locus
$\Xi=0$ of singular quartics. So it is in fact a regular concomitant
and an element of $C_{d,\rho}(-mDC)$.
\end{proof}

\begin{corollary}
If $i+2j+4k<36n$, then $S_{i,j,k}^n=0$.
\end{corollary}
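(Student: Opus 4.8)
The plan is to reduce the statement entirely to the representation theory packaged in Theorem~\ref{the_isom} and then read off a numerical nonexistence bound for concomitants. First I would unwind that isomorphism for the space $S_{i,j,k}^n$: it identifies $S_{i,j,k}^n$ with the space $C_{d,\rho}(-mDC)$ of concomitants of type $(d,\rho)$ that vanish to order $\geq m$ along the locus of double conics, where the parameters are recovered from $(i,j,k)$ and $n$ by inverting the weight dictionary $(i,j,k)=(\rho_1-\rho_2,\rho_2-\rho_3,\rho_3+9n)$ together with $d=\rho_1+\rho_2+\rho_3$ and $n=d-2m$. Solving these gives $\rho=[\,i+j+k-9n,\ j+k-9n,\ k-9n\,]$ and $d=i+2j+3k-27n$. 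Since $C_{d,\rho}(-mDC)$ is a subspace of the full space $C_{d,\rho}$ of concomitants of type $(d,\rho)$, it suffices to prove that $C_{d,\rho}=0$ under the stated hypothesis.

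Next I would record the one numerical identity that drives everything, namely $d+\rho_3 = i+2j+4k-36n$, which is immediate from $\rho_3=k-9n$ and the formula for $d$ above. Consequently the hypothesis $i+2j+4k<36n$ is exactly the inequality $\rho_3<-d$, and the corollary becomes the clean assertion that a concomitant of type $(d,\rho)$ cannot exist as soon as $\rho_3<-d$.

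The heart of the matter is then a weight-lattice estimate. A concomitant of type $(d,\rho)$ exists precisely when $W[\rho]$ occurs in $\Sym^d(R)$, i.e.\ when the multiplicity of $W[\rho]$ in $\Sym^d(R)$ is nonzero; here $R$ is the irreducible $\GL{3}{\CC}$-representation of highest weight $(3,-1,-1)$, compatibly with the decomposition of $\Sym^2(R)$ recorded earlier. The weights of $R$ are those of $\Sym^4(V)\otimes\det^{-1}(V)$, namely $(a-1,b-1,c-1)$ with $a,b,c\geq 0$ and $a+b+c=4$, so every weight of $R$ has all three coordinates $\geq -1$. Passing to the $d$-fold symmetric power, every weight of $\Sym^d(R)$ is a sum of $d$ weights of $R$ and hence has all coordinates $\geq -d$. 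A dominant highest weight $\rho$ occurring in $\Sym^d(R)$ is in particular a weight of $\Sym^d(R)$, so its smallest entry satisfies $\rho_3\geq -d$. Therefore $C_{d,\rho}=0$ whenever $\rho_3<-d$, which is exactly our situation, and $S_{i,j,k}^n=0$ follows.

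I expect the only genuine subtlety to be the bookkeeping in the first two steps: correctly inverting the dictionary of Theorem~\ref{the_isom} so that the hypothesis $i+2j+4k<36n$ lands precisely on the condition $\rho_3<-d$. Once that translation is pinned down, the nonexistence is forced by the elementary observation that twisting by $\det^{-1}(V)$ lowers each coordinate of the weights of $R$ only to $-1$, so $\Sym^d(R)$ can never reach a highest weight whose smallest part drops below $-d$.
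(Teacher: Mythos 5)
Your proof is correct and follows essentially the same route as the paper: you invert the dictionary of Theorem~\ref{the_isom} in exactly the same way ($\rho_3=k-9n$, $d=i+2j+3k-27n$, so the hypothesis $i+2j+4k<36n$ is precisely $\rho_3<-d$) and then conclude from $C_{d,\rho}=0$ when $\rho_3<-d$. The paper dismisses that vanishing as trivial, while you supply its justification via the weight-lattice bound (every weight of $R$ has coordinates $\geq-1$, hence every weight of $\Sym^d(R)$ has coordinates $\geq-d$), which is exactly the reason behind the paper's claim.
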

\begin{proof}
Trivially, $C_{d,\rho}=0$ when $\rho_3<-d$. Since $i=\rho_1-\rho_2$, $j=\rho_2-\rho_3$,
and $k=\rho_3+9n$, we get $d=i+2j+3k-27n$ and obtain the result.
\end{proof}

We also obtain the following generalization
of the main result of~\S\ref{chi18}; this can also be deduced from the result
of Harris-Morrison \cite[Corollary~0.5]{Harris-Morrison}
on slopes of effective divisors on $\overline{\mathcal M}_3$.

\begin{corollary} \label{chi18b}
Let $k$ be a positive integer.
The space $S_{0,0,18k}^{2k}$ of cusp forms of weight $18k$ of order
at least $2k$ along $D$ is generated by $\chi_{18}^k$.
\end{corollary}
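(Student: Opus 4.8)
The plan is to deduce this directly from Theorem~\ref{the_isom}, exploiting precisely the degenerate case $m<0$ that the Notation above flags as giving something new. The first step is to solve for the quadruple $(d,\rho,m,n)$ that makes the right-hand side of the isomorphism equal to the target space $S_{0,0,18k}^{2k}$. Matching the order forces $n=2k$; matching the weight forces $\rho_1-\rho_2=0$ and $\rho_2-\rho_3=0$, so $\rho_1=\rho_2=\rho_3$, and then $\rho_3+9n=18k$ gives $\rho_3=0$. Hence $\rho=(0,0,0)$ is the trivial representation, $d=\rho_1+\rho_2+\rho_3=0$, and $m=(d-n)/2=-k<0$.

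With these values in hand, Theorem~\ref{the_isom} supplies an isomorphism
$$
\varphi\colon C_{0,(0,0,0)}(k\,DC)\buildrel\sim\over\longrightarrow S_{0,0,18k}^{2k}\,,
\qquad c\mapsto\gamma(c)\,\chi_9^{2k}\,.
$$
The second step is to compute the left-hand side. A concomitant of degree $0$ and trivial type lies in the equivariant Hom-space $\mathrm{Hom}_{\GL{3}{\CC}}(\CC,\Sym^0(R))=\CC$; equivalently, its degrees in $x,y,z$, in $\hat{x},\hat{y},\hat{z}$, and in $x\wedge y\wedge z$ are all zero, so it is simply a constant. Since a constant is regular, the imposed condition of order $\ge m=-k$ along $DC$ is vacuous. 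Therefore $C_{0,(0,0,0)}(k\,DC)=\CC$ is one-dimensional, and hence so is $S_{0,0,18k}^{2k}$.

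The third step is to pin down the generator. The constant concomitant $1$ involves none of the coefficients $a_I$, so substituting $\alpha_I/\chi_9$ for the $a_I$ leaves it unchanged, i.e.\ $\gamma(1)=1$ and thus $\varphi(1)=\chi_9^{2k}$. As $\chi_9^2$ is a nonzero multiple of (the pullback of) $\chi_{18}$, we get $\varphi(1)=c\,\chi_{18}^k$ for some nonzero $c\in\CC$; and $\chi_{18}^k$ is indeed a cusp form of weight $18k$ vanishing to order $2k$ along $D$ by~(3). Hence $\chi_{18}^k$ generates $S_{0,0,18k}^{2k}$, which for $k=1$ recovers Lemma~\ref{uniquenesschi18}.

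There is essentially no obstacle once Theorem~\ref{the_isom} is granted; the only point requiring care is the bookkeeping that lands us in the $m<0$ regime, where the double-conic vanishing condition disappears and the entire invariant-theoretic input collapses to the one-dimensional space of constants. I would double-check the parity condition ($d=0$ and $n=2k$ are both even) to confirm that $\varphi(1)=\chi_9^{2k}$ is genuinely the pullback of a Siegel form rather than a merely Teichm\"uller object, which is what makes the identification with $\chi_{18}^k$ legitimate.
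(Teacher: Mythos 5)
Your proof is correct and is essentially the paper's own argument: the paper's proof consists precisely of the parameter choice $d=0$, $\rho=(0,0,0)$, $m=-k$, $n=2k$ in Theorem~\ref{the_isom}, and you have simply made explicit the routine verifications left to the reader (that degree-zero concomitants of trivial type are the constants $\CC$, that the order condition along $DC$ is vacuous for $m<0$, and that $\varphi(1)=\chi_9^{2k}$ is a nonzero multiple of $\chi_{18}^k$ since $\chi_9^2$ pulls back $\chi_{18}$). Your closing parity check, confirming that the even case of $n$ yields a genuine Siegel form, is exactly the point the theorem's statement already encodes, so nothing further is needed.
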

\begin{proof}
Take $d=0$, so $\rho=(0,0,0)$,
and $m=-k$, so $n=2k$. 
\end{proof}

We illustrate Theorem~\ref{the_isom} with a number of examples.

\begin{example} \label{bijv}

\begin{enumerate}

\item{}
Let $d=1$ and $m=0$. Then $\rho=(3,-1,-1)$ and $n=1$. 
We get an identification between the space of
concomitants of degree $1$, so of type $(3,-1,-1)$ and the space of cusp forms
$S_{4,0,8}$~: the universal ternary quartic~$f$ is mapped to $\chi_{4,0,8}$.
We also see that $S_{2,1,8}$, $S_{0,2,8}$, and $S_{1,0,9}$ all vanish
(which was known).
\item{}
Let $d=2$ and take $\rho=(4,0,-2)$ and $m=0$, so $n=2$.
We get an identification between the space of
concomitants of degree $2$ and type $(4,0,-2)$ and the space
$S_{4,2,16}^2=M_{4,2,16}(-2D)$ of Siegel modular forms vanishing at least
twice at the cusp. Hence the latter space is one-dimensional.
Analogously, $S_{8,0,16}^2$ and $S_{0,4,16}^2$ are also one-dimensional,
whereas
the vector spaces $S_{i,j,k+16}^2$ with $i+2j+3k=8$ are zero in all (seven)
other cases.
\item{} Let $d=3$ and take $\rho=(6,0,-3)$ and $m=1$, so $n=1$.
We get an isomorphism $C_{3,(6,0,-3)}(-DC)
\cong S_{6,3,6}$. The representation $W[6,0,-3]$ occurs with multiplicity
$1$ in $\Sym^3(R)$ and the associated concomitant vanishes
on the locus of double conics. Correspondingly, $\dim S_{6,3,6}=1$.
Completely analogously, $\rho=(4,1,-2)$ yields $\dim S_{3,3,7}=1$.

There are seven other nonzero concomitants of degree~$3$; they don't vanish
along~$DC$. Taking $m=0$, we find nine one-dimensional spaces
of cusp forms with order at least three along $D$
(two from the cusp forms just mentioned).
Finally, seventeen spaces $S_{i,j,k+6}$ 
and ten spaces $S^3_{i,j,k+24}$ with $i+2j+3k=12$ vanish.

\item{} Take $\rho=(5,4,-4)$, so $d=5$, and $m=2$, so $n=1$.
There is an isomorphism $C_{5,(10,9,1)}(-2DC)
\cong S_{1,8,5}$. The space $S_{1,8,5}$ is one-dimensional
and a generator of it has been constructed in the paper of Ibukiyama 
and Takemori \cite{I-T}. There is a unique concomitant of type $\rho$
and indeed it vanishes to order~$2$ along~$DC$.
We give more details in Section~\ref{degree5}.

\item{}
For $d=27$ and $\rho=(9,9,9)$, take $m=14$, so $n=-1$. On the right,
the space $S_{0,0,0}^{-1}$ is just $\CC$. Hence the space of invariants
of degree $27$ with order at least $14$ along $DC$ is one-dimensional;
it is generated by the discriminant $\Xi$, which is mapped to $1$,
since $\gamma(\Xi)=\chi_9$.

\end{enumerate}
\end{example}
\end{section}
\begin{section}{Constructing Modular Forms from Concomitants}\label{constructing}
\begin{subsection}{Degree $1$ and $2$}
In this section, we will use the $\SL3{\CC}$-notation
for concomitants as
in \S\ref{invariants}, mainly because it is easier to read. 
As long as the degree $d$ is known, this should not lead
to confusion. As mentioned
in \S\ref{Quartics}, the $\GL3{\CC}$-notation for a concomitant
of degree~$d$ is obtained by lowering the three entries by $d$.
 
There is one concomitant of degree $d=1$, the universal ternary quartic.
As we have seen, the image under the map $\gamma'$ of the basic 
concomitant $f$ is $\chi_{4,0,8}\,$.

For $d=2$ we have the decomposition
$$
\Sym^2(\Sym^4(W))=W[8,0,0]+W[6,2,0]+W[4,4,0]\, .
$$
The last isotypical component defines a covariant denoted 
$\sigma$ by Salmon (see \cite[p.~264]{Salmon}). 
It describes the curve in the dual ${\PP}^2$
of lines intersecting the curve defined by $f$ equianharmonically.
Its first terms are
given by (using $u_0=\hat{x}, u_1=\hat{y}, u_2=\hat{z}$)
%
\begin{align*}
&\sigma=
(a_{10}a_{14}-4a_{11}a_{13}+3a_{12}^2)u_0^4+
(4a_9a_{11}-12a_8a_{12}+12a_7a_{13}-4a_6a_{14})u_0^3u_1\\
&\qquad+(-4a_9a_{10}+12a_8a_{11}-12a_7a_{12}+4a_6a_{13})u_0^3u_2\\
&\qquad+(6a_5a_{12}-12a_4a_{13}+6a_3a_{14}-12a_7a_9+12a_8^2)u_0^2u_1^2+\dots\\
\end{align*}
Under the map $\gamma'$  the three concomitants 
corresponding to the three terms in
the decomposition of $\Sym^2(\Sym^4(W))$
give rise to Siegel 
modular forms of weights $\chi_{8,0,16}$,
$\chi_{4,2,16}$ and $\chi_{0,4,16}$ respectively. The form $\chi_{8,0,16}$
is the (symmetric) square of $\chi_{4,0,8}$. 

Since $\chi_{4,0,8}$ vanishes with multiplicity
$1$ along $D$,  these three modular forms vanish with multiplicity $\geq 2$ along $D$.
In order to give the beginning of the Fourier expansion  
of $\chi_{0,4,16}\,$, we notice that
it suffices to give the coordinates $v_1,v_2,v_4$ and $v_5\,$, since 
under the action of $\mathfrak{S}_3$ given in (3)
the $15$ coordinates satisfy the same relations as
those of $\chi_{4,0,8}$. We have for the entries of the coefficient 
of $q_1^2q_2^2q_3^2$~: 
$$
v_1= 3\, (u-1)^4(v-1)^4/u^2v^2, \qquad  
v_2= 12\,  (u-1)^3(v-1)^3(w-1)(uvw+u-v-w)/u^2v^2w,
$$
$$
v_4= 6\, (u-1)^2(v-1)^2(w-1)^2 \, 
(2\sigma_3^2-4\sigma_3\sigma_1-2\sigma_3+2\sigma_1^2-8\sigma_2+9(u^2+1)vw)
/\sigma_3^2
$$
and
$$
v_5= -12\, (u-1)^2(v-1)^2(w^2-1) \, (\sigma_3^2-2\sigma_3\sigma_1+8\sigma_3+\sigma_1^2-4\sigma_2)/\sigma_3^2\, ,
$$
where as before $\sigma_i$ is the elementary symmetric function
of degree $i$ in $u,v,w$.

Summing up,
from Theorem~\ref{the_isom} and Example~\ref{bijv},
we have isomorphisms 
$$
C_{2,(8,0,0)} \cong S_{8,0,16}^2\,, \qquad
C_{2,(6,2,0)} \cong S_{4,2,16}^2\,, \qquad
C_{2,(4,4,0)} \cong S_{0,4,16}^2\,, \eqno(12)	\label{cased=2}
$$
given by $c \longleftrightarrow (c\circ \chi_{4,0,-1}) \chi_{18}$.

Note that the dimensions of $S_{8,0,16}$ (resp.\ $S_{4,2,16}$, $S_{0,4,16}$)
are $26$ (resp.\ $25$, $12$) (cf.~\cite{Taibi}).
\end{subsection}

\begin{subsection}{Degree $3$}
For $d=3$ we have the multiplicity-free decomposition
$$
\begin{aligned}
\Sym^3(\Sym^4(W))=W[12,0,0]+W[10,2,0]+W[9,3,0]+W[8,4,0]+W[8,2,2]+&\\
W[7,4,1]+W[6,6,0]+W[6,4,2]+W[4,4,4]\, . &\\
\end{aligned}
$$
The covariant that corresponds to $W[12,0,0]$ is given by the form
of degree $12$ that is the third power of $f$, while $W[8,2,2]$
corresponds to the covariant given by the Hessian of $f$. The
contravariant that corresponds to $W[6,6,0]$ is given by the dual
sextic of lines intersecting $f$ in a $4$-tuple with $j$-invariant $1728$.
Finally, $W[4,4,4]$ corresponds to the invariant of degree~$3$.

By the map $c \mapsto (c \circ \chi_{4,0,-1})\chi_9^3$ these concomitants 
yield modular forms of weights 
$$
(12,0,24), (8,2,24), (6,3,24), (4,4,24), (6,0,26), (3,3,25), (0,6,24),
(2,2,26), (0,0,28).
$$
In fact, we have isomorphisms between these spaces of concomitants and
spaces of cusp forms vanishing with multiplicity $\geq 3$ along $D$:
$$
C_{3,(a,b,c)} \cong S_{a-b,b-c,c+24}^3 \qquad {\rm via} \qquad
c \mapsto (c \circ \chi_{4,0,-1})\chi_9^3\, ,
$$
and also
$$
C_{3,(a,b,c)}(-DC) \cong S_{a-b,b-c,c+6} \qquad {\rm via} \qquad
c \mapsto (c \circ \chi_{4,0,-1})\chi_9\, .
$$
Only for $\rho=[9,3,0]$ and $[7,4,1]$ we get nonzero spaces
$S_{a-b,b-c,c+6}$~; namely, $\dim S_{6,3,6}=\dim S_{3,3,7}=1$, 
in perfect agreement with the fact that the
concomitants of degree~$3$ that vanish on the locus of double conics are those
corresponding to $[9,3,0]$ and $[7,4,1]$.

\bigskip

As an example, 
we take a closer look at the case of the concomitant $c$ provided by $W[7,4,1]$
in $\Sym^3(\Sym^4(W))$. The modular form $\gamma'(c)$ is divisible
by $\chi_{18}$ and yields a cusp form $\chi_{3,3,7}=\sum a(N) q^N$ 
of weight $(3,3,7)$ on $\Gamma_3$.
We can calculate some of its Fourier coefficients: we find the irreducible
representation of highest weight $(13,10,7)$ inside $\Sym^{3}(W)\otimes
\Sym^3(\wedge^2(W))\otimes {\det}^7(W)$ with $W$ 
the standard representation of ${\rm GL}(3)$. For example, using the
shorthand $N=[n_{11},n_{22},n_{33};2n_{12},2n_{13},2n_{23}]$ 
for the half-integral matrix $N=(n_{ij})$, we have
for $N_1=[1,1,1;1,1,1]$ 
$$
a(N_1)=[6,-20,-20,0,40,0,0,0,0,0,0,-27,-30,15,90, \ldots]^t
$$
and for $a(2N_1)$ we find
$$
8\, [1050,-2380,-2380,720,3320,720,0,0,0,0,1560,-4725,-2490,-15,1430,\ldots]^t
$$
and for $a([3,2,2,4,4,2])$ we find the vector
$$
4 [ -480, 860, 860, -430, -1060, -430, 0, 315, 315, 0, -1140, 2160, 2040, -1275, -2610, \ldots ]^t
$$
and then the formula for the Hecke operator for $(i,j,k)=(3,3,7)$
\begin{align*}
a_2(N_1)=& \, 
a(2\, N_1)
+
2^{i+2j+k-3}
\Sym^{-i}
\left(
\begin{smallmatrix}
1 & 1 & 1  \\
0 & 2 & 0  \\
0 & 0 & 2 
\end{smallmatrix}
\right)
\Sym^{-j}(\wedge^2(
\left(
\begin{smallmatrix}
1 & 1 & 1  \\
0 & 2 & 0  \\
0 & 0 & 2 
\end{smallmatrix}
\right))
a([3\, 2\, 2\,; 4\, 4\, 2])
\end{align*}
gives the eigenvalue $\lambda_2=1080$.  The eigenvalues can be checked against the data provided by \cite{BFvdG3}.
\smallskip

\noindent
The invariant $\iota$ of type $(3,(4,4,4))$ is given by
$$
\begin{aligned}
a_{0}\, a_{10}\, a_{14}
-4\, a_{0}\, a_{11}\, a_{13}
+3\,a_{0}\, a_{12}^2+
4\, a_{1}\, a_{11}\, a_{9}
-12\, a_{1}\, a_{12}\, a_{8}
+12\, a_{1}\, a_{13}\, a_{7}
-4\, a_{1}\,a_{14}\, a_{6}&\\
-4\, a_{10}\, a_{2}\, a_{9}
+3\, a_{10}\,a_{5}^2
+12\, a_{11}\, a_{2}\, a_{8}
-12\, a_{11}\, a_{4}\, a_{5}
-12\, a_{12}\, a_{2}\, a_{7}
+6\, a_{12}\, a_{3}\,a_{5}
+12\, a_{12}\, a_{4}^2&\\
+4\, a_{13}\, a_{2}\, a_{6}
-12\, a_{13}\, a_{3}\, a_{4}
+3\, a_{14}\, a_{3}^2
-12\,a_{3}\, a_{7}\, a_{9}
+12\, a_{3}\, a_{8}^2
+12\, a_{4}\, a_{6}\, a_{9}
-12\, a_{4}\, a_{7}\, a_{8}&\\
-12\, a_{5}\,a_{6}\, a_{8}
+12\, a_{5}\, a_{7}^2&\\
\end{aligned}
$$
in the coefficients $a_i$ of the ternary quartic $f$.
It defines a cusp form $\chi_{28}$ of weight $28$ vanishing
with multiplicity $3$ at $D$. Using the pairing 
induced by the pairing of $W$ and $\wedge^2 W$, we get
$\langle f, \sigma\rangle =\iota$ or in terms of modular forms
$$
\langle \chi_{4,0,8}, \chi_{0,4,16} \rangle = \chi_{28}\, ,
$$
where the pairing of $\chi_{4,0,8}=\sum_I n_I \alpha_I x^I$ and
$\chi_{0,4,16}=\sum_I \beta_I \hat{x}^I$ is $\sum_I n_I \alpha_I \beta_I$.
The Fourier expansion of $\chi_{28}$ starts with
$$
F(\tau)=(\frac{(u-1)^2(v-1)^2(w-1)^2}{144\,u^3v^3w^3}
(u^4v^4w^4+u^4v^4w^3+\ldots))
q_1^3q_2^3q_3^3+\ldots \, .
$$
This Siegel modular form vanishes with order~$3$ at infinity and 
along ${\mathcal A}_{2,1}$ with order~$8$. Indeed, expanding $F$ as a Taylor series
along $\mathfrak{H}_2 \times \mathfrak{H}_1$ as done in \cite[Proposition~2.1]{C-vdG}
we get as first term a tensor product
$F' \otimes F^{\prime\prime}\in S_{8,28}(\Gamma_2)\otimes S_{36}(\Gamma_1)$ because
$F^{\prime\prime}$ will be a cusp form on $\Gamma_1$
vanishing with multiplicity $3$ at the cusp
and the first such form is $\Delta^3$. 
By looking around $\mathfrak{H}_1^3$ we see that
$F^{\prime}$ vanishes along $\mathfrak{H}_1^2$ with multiplicity $4$;
dividing $F^{\prime}$ by $\chi_{10}^2$ we get a cusp form of weight $(8,8)$ on $\Gamma_2$.
Now $S_{8,8}(\Gamma_2)$ has dimension $1$ and is generated by the form $\chi_{8,8}$
(see \cite[p.\ 11]{CFvdG}) and we find $F^{\prime}= (1/104230)^2 \chi_{10}^2 \chi_{8,8}\,$.

We thus see that the modular form $F$, viewed as Teichm\"uller form,
vanishes with order $3$
along $\delta_0$ and order $8$ along~$\delta_1$. It can be seen as a section of
${\mathcal O}(28\lambda-3\, \delta_0-8 \, \delta_1)$;
see the last remark of \cite[p.~1766]{OS}.
\end{subsection}

\begin{subsection}{Degree $5$}\label{degree5}
We have the isotypical decomposition of
$\Sym^5(\Sym^4(W))$ as 
\begin{tiny}
$$ 
\begin{aligned}
2\, W[8,6,6] + 2\, W[8,8,4] + W[9,6,5] + W[9,7,4] + W[9,8,3] + 4\, W[10,6,4]
+ 2\, W[10,7,3] &\\
+ 3\, W[10,8,2] + W[10,9,1] + W[10,10,0] + W[11,5,4] + 3\, W[11,6,3]
 + 2\, W[11,7,2] + W[11,8,1] &\\
+ 3\, W[12,4,4] + W[12,5,3] + 4\, W[12,6,2] + 
W[12,7,1] + 2\, W[12,8,0] + 2\, W[13,4,3] + 2\, W[13,5,2]&\\
 + 2\, W[13,6,1] + W[13,7,0] + 3\, W[14,4,2] 
+ W[14,5,1] + 2\, W[14,6,0] + W[15,3,2] + W[15,4,1] &\\
 + W[15,5,0] + W[16,2,2] + 2\, W[16,4,0] + W[17,3,0] + W[18,2,0] + W[20,0,0]. 
&\\
\end{aligned}
$$
\end{tiny}
The concomitant $c$ provided by $W[10,9,1]$ occurring in 
$\Sym^5(\Sym^4(W))$ vanishes with order 2 along the locus of double conics,
so the modular form $\gamma'(c)$ is divisible by $\chi_{18}^2$ and 
yields a cusp form $\chi_{1,8,5} \in S_{1,8,5}$. 
We calculate a few Fourier coefficients. We represent these
inside the representation 
$W\otimes \Sym^8(\wedge^2(W))\otimes {\det}^5(W)$
with $W$ the standard representation of ${\rm GL}(3)$. 
With  $N_1 = [1,1,1;1,1,1]$
we have
$$
a(N_1)=1536 \, [0, 0, 0, 0, 0, 0, 0, -1, 1, 0, 0, 2, 0, -2, 0, 0, -1, -2, \ldots]^t
$$
and $a(2N_1)$ is given by
$$
24\, [0, -4, 4, 8, 0, -8, -4, 17, -17, 4, 0, -50, 0, 50, 0, 4, 25, 50,\ldots]^t
$$
while $a([3,2,2;4,4,2])$ is given by
$$
192 \,
[0, 8, -8, -36, 0, 36, 68, 74, -74, -68, -70, -160, 0, 160, 70, 42, 155, 110,\ldots]^t.
$$
This gives the  Hecke eigenvalue at $p=2$: 
$\lambda_2=-2880=-24(216+2^2(-24))$, in agreement with
the fact that this cusp form is predicted to be a lift from $\Gamma_1$ with
 Hecke eigenvalues of the shape $\lambda_p=\tau(p)(b(p)+p^2\tau(p))$,
where $\tau(p)$ is the Fourier coefficient of $\Delta$ at $p$, while $b(p)$ is 
that of the unique normalized cusp form of weight 16 on $\Gamma_1$.
This modular form is also given  in \cite[\S 5.6]{I-T},
where it is constructed using theta functions.

In the case of the components 
\begin{tiny}
$$
W[10,6,4],\quad W[10,8,2],\quad W[11,6,3],\quad W[12,4,4],\quad
W[12,6,2],\quad W[13,4,3],\quad W[13,6,1],\quad W[14,4,2]
$$
\end{tiny}
which occur with multiplicity $\geq 2$ one can find a nonzero concomitant $c$ 
vanishing with multiplicity $\geq 2$ on the locus of double conics, and then
$\gamma^{\prime}(c)/\chi_{18}^2$ defines a holomorphic cusp form
of weight
$$
(4,2,8),\quad (2,6,6),\quad (5,3,7),\quad (8,0,8),\quad (6,4,6),\quad (9,1,7),\quad (7,5,5),\quad (10,2,6)
$$
respectively,
and in these cases we checked that the eigenvalue for the Hecke
operator at $p=2$ agrees with the data given in \cite{BFvdG3}.
Also the case $W[8,6,6]$ gives a concomitant $c$ vanishing with
order $\geq 2$ on the locus of double conics. 
Then $\gamma^{\prime}(c)/\chi_{18}^2$ yields a cusp form in 
$S_{2,0,10}$. 
Its Fourier
expansion starts with
$$
\frac{1}{2308\, uvw}
\left( \begin{matrix} c_1 \\ \vdots \\ c_6 \end{matrix} \right)
q_1q_2q_3 +\cdots 
$$
with 
$$
\begin{aligned}
c_1=u^2v^2w^2+u^2v^2w+u^2vw^2+uv^2w^2-6\, u^2vw+-6\, uv^2w+ 14\, uvw^2 +u^2v+u^2w+&\\
uv^2
-20\, uvw+uw^2+v^2w+vw^2+u^2+14\, uv-6\, uw+v^2-6\, vw+w^2+u+v+w &\\
\end{aligned}
$$
and
$$
c_2=u^2\, v^2w^2+u^2v^2w+u^2vw^2-6\, u^2vw+u^2v+u^2w-v^2w-vw^2+u^2-v^2+6\, vw-w^2-v-w
$$
and this determines the other coordinates by Lemma~\ref{actionS3}.
\end{subsection}
\begin{subsection}{Degree $6$}
The so-called catalecticant is an invariant of degree $d=6$ and is 
associated to $W[8,8,8]$ occurring in $\Sym^6(\Sym^4(W))$. 
It is given as
$$
i_6=
\left \vert
\begin{matrix}
a_0 & a_3 & a_5 & a_4 & a_2 & a_1\\
a_3 & a_{10} & a_{12} & a_{11} & a_7 & a_6 \\
a_5 & a_{12} & a_{14} & a_{13} & a_9 & a_8 \\
a_4 & a_{11} & a_{13} & a_{12} & a_8  & a_7 \\
a_2 &  a_7 & a_9 &  a_8 & a_5 & a_4 \\ 
a_1 & a_6 & a_8 & a_7 & a_4 &  a_3
\end{matrix}
\right \vert
$$
and gives rise to a Siegel modular form of weight $56$ vanishing with order $6$ at $D$
and with order at least $16$ along ${\mathcal A}_{2,1}$ since the first cusp form vanishing with
order $6$ at $\infty$ on $\Gamma_1$ is $\Delta^6$ of weight $72$ and $16=72-56$.
This modular form can be interpreted as a section of
${\mathcal O}(56\, \lambda-6\, \delta_0-16\, \delta_1)$ on $\overline{\mathcal M}_3$,
in agreement with \cite[p.\ 1766]{OS}.
Another description of this form can be found in \cite[Proposition~4.5]{LerRit}

\end{subsection}
\begin{remark}
Some of the modular forms constructed above have a geometric meaning. 
Chipalkatti proves in \cite{Chipalkatti} 
that a ternary quartic $f$ is the sum of the fourth powers 
of $s$ linear forms $f=\ell_1^4+\ell_2^4+\cdots +\ell_s^4$ for $1\leq s \leq 5$
if and only if the concomitants in a certain set $U_s$ vanish, where the set
$U_s$ is given by
$$
\begin{aligned}
&U_1=\{ c(2;4,2), c(2;0,4)\}, \qquad
U_2=\{ c(3;6,0), c(3;0,6), c(3;3,3), c(3,2,2), c(3;0,0)\}, \\
&U_3=\{ c(4;4,0), c(4;2,4), c(4;1,3), c(4;0,2) \},\\
&U_4=\{ c_1(5;0,4)-c_2(5;0,4), c(5;2,0) \},\qquad
U_5=\{ 3c(6,0,0)-c(3;0,0)^2\}; \\
\end{aligned}
$$
the concomitant $c(d;m,n)$ corresponds to the irreducible representation 
$W[m_1,m_2,m_3]$ occurring in $\Sym^d(\Sym^4(W))$ with $m=m_1-m_2$ and 
$n=m_2-m_3$ (and $4d=m_1+m_2+m_3$), see \cite[Theorem 4.1]{Chipalkatti}.
For example, the vanishing of the modular forms $\chi_{4,2,16}$ and $\chi_{0,4,16}$
signalizes this property for $s=1$.
\end{remark}
\end{section}
\begin{section}{Teichm\"uller modular forms and the cohomology of 
local systems} It is well-known that Siegel modular forms of degree 
$g$ occur in the cohomology of local systems on the moduli space 
$\A{g}$ of principally polarized abelian varieties
of dimension~$g$.
Denoting by $\pi: {\mathcal X}_g \to {\mathcal A}_g$ 
the universal abelian variety,
we let $\VV=R^1\pi_* {\QQ}_{\ell}$ be the standard local system
of rank $2g$ on $\A{g}$. This comes with a symplectic pairing
${\VV} \times {\VV} \to {\QQ}_{\ell}(-1)$. 
For every irreducible representation
of the symplectic group ${\rm GSp}(2g,{\QQ})$ 
with highest weight $\mu$, we have a local
system $\VVl$ obtained from $\VV$ by applying a Schur functor.
We consider the `motivic' Euler characteristic 
$$
e_c(\A{g};{\VVl})=\sum_{i=0}^{g(g+1)}(-1)^i [H^i_c(\A{g},\VVl)]
$$
of compactly supported cohomology. 
The cohomology group $H^i_c(\A{g}\otimes {\CC},
\VVl\otimes {\CC})$ 
(resp.\ $H^i(\A{g}\otimes {\CC},\VVl\otimes {\CC})$) is
provided with a mixed Hodge structure of weights 
$\leq |\mu|+i$ (resp.\ $\geq |\mu|+i$)
and the sums of the elements of the $2^g$ subsets of
$\{ \mu_g+1,\mu_{g-1}+2,\ldots, \mu_1+g\}$
yield the degrees at which nontrivial steps in the Hodge filtration
may occur, see \cite{F-C} or
\cite{BFvdG2} and references there.
So the last step 
is $F^{|\mu|+g(g+1)/2}$ and it is here
that we find Siegel modular forms: there is an isomorphism
$$
S_{n(\mu)}= H^0(\barA{g}\otimes {\CC}, 
{\EE}_{\mu}\otimes {\det}^{g+1}({\EE})(-D))
\cong F^{|\mu|+g(g+1)/2} 
H^{g(g+1)/2}_c(\A{g}\otimes {\CC}, {\VVl}\otimes {\CC})
\eqno(13)
$$
where 
$$
n(\mu)=(\mu_1-\mu_2,\mu_2-\mu_3,\ldots, \mu_{g-1}-\mu_g,
\mu_g +g+1)\, .
$$ 
We denote by $H^i_{!}(\A{g},\VVl)$ the image of $H^i_c(\A{g},\VVl)\to
H^i(\A{g},\VVl)$. It is known that if $\mu$ is regular then 
$H_{!}^i(\A{g},\VVl)=(0)$ if $i\neq g(g+1)/2$. 
The above results are due to Faltings \cite{Faltings} and
Faltings-Chai \cite{F-C};
a key role is played by the (dual) BGG-complex.

We are interested in a similar interpretation of 
Teichm\"uller modular forms for $g\ge2$. 	
So far, we have only considered these for $g=3$. With a
Teichm\"uller modular form of type $\rho$ (or weight $w(\rho)$),
we mean here a section over $\barM{g}$ of $\EE^{\prime}_{\rho}$,
the bundle obtained by applying the Schur functor associated
to an irreducible representation $\rho$ of ${\rm GL}(g)$
to the Hodge bundle $\EE^{\prime}$.

By pulling back under the Torelli map $t$, 
we obtain local systems
$\VVl^{\prime}=t^*{\VVl}$ on $\M{g}$ and $\Mct{g}=\barM{g}-\Delta_0$,
the moduli space of curves of compact type.
We consider the motivic Euler characteristic
$$
e_c(\M{g};\VVl^{\prime})=
\sum_{i=0}^{6g-6}(-1)^i [H^i_c(\M{g},\VVl^{\prime})]
$$
and similarly we can consider $e_c(\Mct{g};\VVl^{\prime})$.

We obtain the following partial analogue of the above results.

\begin{theorem} \label{topHodge}
For the natural mixed Hodge structures
on the middle cohomology groups, we have the following isomorphisms:
$$ F^mH^{3g-3}(\M{g},\VlaC)\cong H^0(\barM{g},\EE^{\prime}_{\mu}\otimes
\mathcal{O}(13\lambda-\delta)), \eqno(14) $$
$$ F^mH^{3g-3}(\Mct{g},\VlaC)\cong H^0(\barM{g},\EE^{\prime}_{\mu}\otimes
\mathcal{O}(13\lambda+\delta_0-2\delta)), \qquad\text{\rm and} \eqno(15) $$
$$ F^mH^{3g-3}_c(\M{g},\VlaC) \cong F^mH^{3g-3}_c(\Mct{g},\VlaC)
\cong H^0(\barM{g},\EE^{\prime}_{\mu}\otimes \mathcal{O}(13\lambda-2\delta)),\eqno(16)$$
where $m=3g-3+|\mu|$ is the maximum possible Hodge degree.
\end{theorem}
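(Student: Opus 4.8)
\emph{Overall strategy.} The plan is to carry out, directly on the stack $\barM{g}$, the same ``top piece of the Hodge filtration'' computation that yields the Faltings--Chai isomorphism (13) on $\barA{g}$, with Mumford's canonical--class formula playing the role that the homogeneous geometry of $\A{g}$ plays there. First I would observe that $\VlaC$ underlies a polarized variation of Hodge structure on $\M{g}$ (and on $\Mct{g}$), namely the Schur functor attached to $\mu$ of the weight--one variation carried by the universal Jacobian; over the open stratum its top Hodge bundle is $F^{|\mu|}=\EE^{\prime}_{\mu}$ (apply the Schur functor to $F^1=\EE^{\prime}$). I would extend this variation to $\barM{g}$ by Deligne's canonical extension, with logarithmic poles along the boundary. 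The extended Torelli map $\barM{g}\to\tilde{\mathcal A}_g$ (\cite{Namikawa}) carries $\delta_0$ to the divisor $D$ at infinity, where the abelian part degenerates with unipotent monodromy, but carries each compact--type boundary divisor into the interior decomposable locus, where the monodromy on $H^1$ is trivial. Since $\EE^{\prime}_{\mu}$ on $\barM{g}$ is by definition the pullback of $\EE_{\mu}$, and $\EE_{\mu}$ is the top Hodge bundle of the canonical extension on $\barA{g}$ (\cite{F-C}), the equality $F^{|\mu|}=\EE^{\prime}_{\mu}$ persists over all of $\barM{g}$.

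\emph{The de Rham computation.} Next I would extract the top Hodge piece of middle cohomology from the filtered logarithmic de Rham complex $DR^{\bullet}=[\mathcal V\otimes\Omega^{\bullet}_{\barM{g}}(\log B)]$, where $\mathcal V$ is the canonical extension and $B$ is the boundary of the open stratum under consideration ($B=\delta$ for $\M{g}$, $B=\delta_0$ for $\Mct{g}$). By Griffiths transversality the degree--$q$ term of the subcomplex $F^{m}DR^{\bullet}$ is $F^{m-q}\mathcal V\otimes\Omega^{q}_{\barM{g}}(\log B)$; since $F^{>|\mu|}\mathcal V=0$ and $m=3g-3+|\mu|$, this subcomplex is concentrated in top degree $q=3g-3$, where it equals $\EE^{\prime}_{\mu}\otimes\Omega^{3g-3}_{\barM{g}}(\log B)$. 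Strictness of the Hodge filtration (that is, $E_1$--degeneration, which holds for our geometric variation by Saito's theory of mixed Hodge modules) then gives
$$ F^{m}H^{3g-3}(\,\cdot\,,\VlaC)=H^0\bigl(\barM{g},\,\EE^{\prime}_{\mu}\otimes\Omega^{3g-3}_{\barM{g}}(\log B)\bigr). $$
For compactly supported cohomology one replaces this by the $j_{!}$--realization, the same complex twisted by $\mathcal O(-B)$, whose top term is $\EE^{\prime}_{\mu}\otimes\Omega^{3g-3}_{\barM{g}}(\log B)(-B)=\EE^{\prime}_{\mu}\otimes\omega_{\barM{g}}$, now independent of $B$; this already accounts for the two isomorphic groups in (16).

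\emph{Substituting the boundary classes.} Finally I would invoke $\Omega^{3g-3}_{\barM{g}}(\log B)=\omega_{\barM{g}}(B)$ together with Mumford's formula $\omega_{\barM{g}}=13\lambda-2\delta$ on the stack. Taking $B=\delta$ gives $13\lambda-\delta$ and hence (14); taking $B=\delta_0$ gives $13\lambda-2\delta+\delta_0$ and hence (15); and the compact--support term is $\omega_{\barM{g}}=13\lambda-2\delta$ for both open strata, hence (16). As a consistency check, the identical argument on $\barA{g}$, where $\Omega^{1}_{\barA{g}}(\log D)\cong\Sym^2(\EE)$ so that $\omega_{\barA{g}}=\det^{g+1}(\EE)(-D)$, reproduces the cusp--form isomorphism (13).

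\emph{The main obstacle.} The hard part will be the Hodge--theoretic input on $\barM{g}$ rather than the bookkeeping. One must justify both the $E_1$--degeneration of the logarithmic Hodge--de Rham spectral sequence and the identification $F^{|\mu|}\mathcal V=\EE^{\prime}_{\mu}$ along the whole boundary; the delicate point is the genuine unipotent degeneration along $\delta_0$, where $\VlaC$ is not the restriction of a variation living on all of $\barA{g}$ (because $\delta_0$ maps to infinity). This is exactly the place where the computation cannot be reduced to $\A{g}$, and where the odd local systems acquire middle cohomology invisible on $\A{g}$; I would handle it through Saito's mixed Hodge modules together with the compatibility of the canonical extension with the extended Torelli map. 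A secondary point needing care is that $\barM{g}$ is a stack: one must use Mumford's canonical--class formula for the stack, so that every boundary divisor enters with coefficient $-2$, rather than its coarse--space version.
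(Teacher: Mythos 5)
Your proposal is correct and takes essentially the same route as the paper's proof: the canonical extension, the observation that $F^m$ of the logarithmic de Rham complex reduces to the single sheaf $\EE^{\prime}_{\mu}\otimes\Omega^{3g-3}_{\barM{g}}(\log B)$ in top degree, $E_1$-degeneration of the Hodge spectral sequence, the twist by $\mathcal{O}(-B)$ for compactly supported cohomology, and Mumford's stack-level formula $\omega_{\barM{g}}=\mathcal{O}(13\lambda-2\delta)$ to convert $\Omega^{3g-3}_{\barM{g}}(\log B)=\omega_{\barM{g}}(B)$ into the three stated line bundles. The points you flag as delicate (the identification $F^{|\mu|}=\EE^{\prime}_{\mu}$ along the boundary and the degeneration, which you justify via Saito) are exactly what the paper asserts with a reference to Schnell's notes, so your write-up is if anything slightly more detailed than the original.
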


\begin{proof}
It is not clear whether the BGG-complex can be adapted to this setting
and we resort to the logarithmic de Rham-complex.
The local system 
$\VlaC$ on $\M{g}$ (or $\Mct{g}$)
corresponds to a holomorphic vector bundle $\Vbla$ with
flat connection $\nabla$ (the Gauss-Manin connection). 
The boundary $D=\barM{g}-\M{g}$ is a divisor with normal crossings. 
The bundle $\Vbla$ admits a canonical extension $\Vbar$ to
$\barM{g}$ and $\nabla$ extends to a map
$$\nabla\colon\Vbar\to\Vbar\otimes\Omega^1_{\barM{g}}(\log D)$$
with nilpotent polar part.
Then the hypercohomology of the logarithmic de Rham-complex
computes the cohomology of $\VlaC$ on $\M{g}$~:
$$ H^p(\M{g}, \VlaC)
\cong \HH^p(\barM{g},\Vbar\otimes\Omega^{\bullet}_{\barM{g}}(\log D)).$$
Cf.~\cite{Schnell}.
Twisting with $\mathcal{O}(-D)$, we obtain a complex that computes
the compactly supported cohomology:
$$ H^p_c(\M{g},\VlaC)\cong \HH^p(\barM{g},
\Vbar\otimes\Omega^{\bullet}_{\barM{g}}(\log D)\otimes \mathcal{O}(-D)).$$
Analogous statements hold for $\Mct{g}\,$, after replacing $D$ by $\Delta_0$. 

These complexes admit natural Hodge filtrations,
which induce the Hodge filtrations of the mixed Hodge structures
on $H^p_{(c)}(\M{g},\VlaC)$ and $H^p_{(c)}(\Mct{g},\VlaC)$.
Let $m=3g-3+|\mu|$ be the top Hodge degree. The complex
$F^m(\Vbar\otimes\Omega^{\bullet}_{\barM{g}}(\log D))$
consists of the sheaf 
$\EE^{\prime}_{\mu}\otimes\Omega^{3g-3}_{\barM{g}}(\log D)$,
considered as a complex supported in degree $3g-3$.
As is well-known~\cite{HM}, the canonical bundle of the stack
$\barM{g}$ equals $\mathcal{O}(13\lambda-2\delta)$.
Since the spectral sequence associated to the Hodge filtration
degenerates at $E_1$, we obtain the stated isomorphisms.
\end{proof}

Thus the final steps in the Hodge filtrations on the middle
cohomology groups are isomorphic to spaces of Teichm\"uller
modular forms of highest weight $\mu+(13,13,\dots,13)$
with prescribed vanishing behaviour along
components of the boundary. This is entirely analogous to 
the relation between Siegel modular forms and the cohomology of $\A{g}\,$.
In particular, we have in genus $2$ the isomorphism
$\Mct{2}\cong\A{2}$ and the relation $10\lambda\sim\delta_0+2\delta_1$;
the isomorphisms for $\Mct{2}$ above agree with those for $\A{2}$
discussed earlier.
However, note that the Teichm\"uller modular forms found here are
cusp forms in a strong sense: they need to vanish (at least) 
once or twice along each component of the boundary.

The isomorphisms continue to hold if we change the cohomological
degrees on both sides by the same amount; in particular, $F^mH^p_{(c)}=(0)$
for $p<3g-3$. As a trivial example, take $\mu=0$, so $m=3g-3$; the
top compactly supported cohomology of $\M{g}$ and $\Mct{g}$ 
is spanned by $L^{3g-3}$ and coincides with $F^{3g-3}$; on the other hand,
$H^{3g-3}(\barM{g},\mathcal{O}(13\lambda-2\delta))\cong\CC$, as
follows from Serre duality.

We now return to $g=3$.
Assume first that $\mu=(a,b,c)$ is even, i.e., $a+b+c$ is even.
Then $\mu+(13,13,13)$ is odd and so are
the Teichm\"uller modular forms in the isomorphisms above. As we saw
in \S7, odd Teichm\"uller modular forms of genus $3$ are divisible
by $\chi_9$, and the even quotients are pullbacks of Siegel modular forms.
So for $g=3$ and $\mu$ even,
the isomorphisms above may be rewritten as follows:
$$ F^mH^{6}(\M{3},\VlaC)\cong 
F^mH^{6}(\Mct{3},\VlaC)\cong
M_{a-b,b-c,c+4}\,, \eqno(17) $$
$$ F^mH^{6}_c(\M{3},\VlaC) \cong F^mH^{6}_c(\Mct{3},\VlaC)
\cong 
S_{a-b,b-c,c+4}\,, \eqno(18) $$
with $m=a+b+c+6$. 
Therefore, we also have isomorphisms with $F^mH^6(\A{3},\VVl)$ 
resp.~$F^mH^6_c(\A{3},\VVl)$.

An interesting particular case of equation (17) 
is obtained by taking $\mu=0$. We find that 
$F^6H^6(\M{3},\CC)$ and $F^6H^6(\Mct{3},\CC)$ are
onedimensional and naturally isomorphic to $M_{0,0,4}$,
which is spanned by the Eisenstein series $\alpha_4$
(see~\cite{Tsuyumine}). This is Looijenga's class
of type $(6,6)$ (cf.~\cite{Looijenga}).
(The dual class in $H^6_c$ is of type $(0,0)$ and
$F^6H^6_c=(0)$, since $\alpha_4$ is not a cusp form.)

Assume next that $\mu$ is odd, so $\nu=\mu+(13,13,13)$ is even.
Equations (14)--(16) 
say the following.
The classes in $F^{a+b+c+6}$ of the middle cohomology of $\VlaC$
on $\M{3}$ correspond to Teichm\"uller forms of type $\nu$ that
vanish along the entire boundary. Only those corresponding to forms
that vanish at
least twice along the boundary divisors parametrizing
reducible curves are restrictions from $\Mct{3}$.
Only the forms vanishing at least twice along the entire boundary
correspond to compactly supported classes.

For concrete examples, note that
Bergstr\"om~\cite{Jonas} has determined $e_c(\M{3},\VlaC)$, hence
$e(\M{3},\VlaC)$ for all $\mu$ with $|\mu|=a+b+c\le7$.
E.g., for $\mu=(1,1,1)$, so $m=9$, one has $e_c=-L^7-L^2+L+1$, so
$e=L^9+L^8-L^7-L^2$. Then $F^me=L^9$, but it is not yet clear
which cohomology groups contribute to this. Since the virtual
cohomological dimension of $\M{3}$ equals $7$ (cf.~\cite{Harer}),
only classes in $H^6$ and $H^7$ can contribute, and there must be
at least one class in $F^9H^6$. Now
$$F^9H^6(\M{3},\VlaC)\cong H^0(\barM{3},\mathcal{O}(14\lambda-\delta)).$$
According to~\cite{Tsuyumine}, the space $S_{0,0,14}$ is spanned
by the cusp form $\beta_{14}$. We conclude that $\beta_{14}$
vanishes along $\A{2,1}$ and that $F^9H^7=(0)$. In fact, by loc.~cit.,
$\beta_{14}$ vanishes twice along $\A{2,1}$, so the corresponding
class is a restriction from $\Mct{3}$.

In~\cite{BFvdG2}, Bergstr\"om and two of the present authors have
counted curves of genus $2$ and $3$ over finite fields and their
numbers of points and in this way determined the $\FF_q$-traces
of $e_c(\M{3},\VVl^{\prime})$ and $e_c(\A{3},\VVl)$ for $q\le25$.
As discussed in loc.~cit., this has led to a complete conjecture for 
these motivic Euler characteristics in the case of $\A{3}$.
In the case of $\M{3}$, we have obtained
precise conjectures for nearly all $\mu$ with $|\mu|\le20$ 
(for three $\mu$ with $|\mu|=19$, some information is missing).
The work of
Chenevier-Renard \cite{CheRen},
Chenevier-Lannes \cite{CheLan},
Ta\"ibi \cite{Taibi},
and M\'egarban\'e \cite{Meg}
has played a particularly important role here.
Below, we use some of the results to provide further examples
(in effect, we obtain further evidence for the conjectures).
Note that the `motivic' conjectures tell us in particular
where terms of the maximum possible Hodge degree 
are to be expected.

We first consider scalar-valued Teichm\"uller modular forms,
so $\mu=(k,k,k)$. Here are three more even cases:
\begin{enumerate}
\item $k=2$, $m=12$: $e_c=1$, $e=L^{12}$; this corresponds to the
Eisenstein series $\alpha_6$~;
\item $k=4$, $m=18$: $e_c=L^5+1$, $e=L^{18}+L^{13}$; $F^me$
corresponds to the Eisenstein series $\alpha_4^2$~;
\item $k=6$, $m=24$: $F^me=L^{24}+L^7S_{18}$ corresponds to the
space spanned by the modular forms $\alpha_4\alpha_6$ and $\alpha_{10}\,$.
\end{enumerate}
More interesting are two more odd cases:
\begin{enumerate}
\item $k=3$, $m=15$: $F^me=L^{15}$; indeed,
$H^0(\barM{3},\mathcal{O}(16\lambda-\delta))$ is spanned by $\beta_{16}$,
since no nontrivial linear combination of the other two generators
$\alpha_4\alpha_{12}$ and $\alpha_{16}-\tfrac{27}4\alpha_6\alpha_{10}$ 
of the space of cusp forms of weight $16$
vanishes along $\A{2,1}$~;
\item $k=5$, $m=21$: in this case, we conjecture that
$$e_c(\M{3},\VV_{5,5,5}^{\prime})=
L^7+2L^6+5L^5+6L^4+4L^3+3L^2+2L+1+S[4,10],$$
so $F^me= L^{21}+F^mS[4,10]\cong \CC^2$. 
On the other hand,  
$H^0(\barM{3},\mathcal{O}(18\lambda-\delta))$ is spanned by
$\alpha_4\beta_{14}$ and $\chi_{18}$, since
no linear combination of $\alpha_6\alpha_{12}$
and $\alpha_{18}-27\alpha_4^2\alpha_{10}$ (the other 
generators of the space of cusp forms
of weight $18$) vanishes along $\A{2,1}\,$.
Note also that $F^me_c=F^mS[4,10]\cong\CC$. This corresponds to
$H^0(\barM{3},\mathcal{O}(18\lambda-2\delta))=\CC\cdot\chi_{18}$. 
\end{enumerate}

\bigskip

More generally, for $\mu=(a,b,c)$ odd, it is natural to consider
$$
H^0(\barM{3},\EE^{\prime}_{\mu}\otimes \mathcal{O}(13\lambda-2\delta)),
$$
since this corresponds to compactly supported cohomology by
equation~(16) 
and thus to the data obtained by counting curves.

This space is a subspace of 
$$ S_{a-b,b-c,c+13}^2\,,$$
which by Theorem~\ref{the_isom} is isomorphic to
$$ C_{d,(a-5,b-5,c-5)}(-\tfrac{d-2}2DC),$$
where $d=a+b+c-15$ is even. In particular,
there are no contributions for $|\mu| < 15$.

More precisely, the subspace consists of those forms
vanishing to order at least~$2$ along $\A{2,1}$
(or along $\delta_1$ when considered as Teichm\"uller forms).

Now the isomorphism is given by $c\mapsto \gamma(c)\chi_{18}$
and we know that $\chi_{18}$ has order $6$ along $\A{2,1}$.
So whenever $\gamma(c)$ has order at least~$-4$ along
$\delta_1$, we obtain an element of the subspace
of interest. Since $\chi_{4,0,-1}$ has a simple pole
along $\delta_1$ (by Lemma~\ref{I2notI3} and Proposition~\ref{basic}), this is
automatically the case for $d\le4$.

The case $d=0$, hence $\mu=(5,5,5)$ has been discussed
already: $m=21$ and 
$$F^me_c(\VV_{5,5,5}^{\prime}\otimes\CC) =F^mS[4,10]$$
corresponds to the space spanned by $\chi_{18}\,$.

In the case $d=2$, there are three nonzero spaces
of concomitants (\S\ref{cased=2}), each of dimension~$1$.
The corresponding $\mu$ are $(11,3,3)$, $(9,5,3)$, and $(7,7,3)$.
The associated (irreducible) `motives' are
$M[23,13,5]$, $S[8,9]$, and $M[23,15,3]$, respectively.
Here, $S[8,9]$ is the motive associated to Siegel cusp forms
of degree~$2$ (!) of type $\Sym^8\det^9$. The other two motives
are {\sl not} associated to Siegel cusp forms. They are
$6$-dimensional motives of weight $23$, first identified
by Chenevier-Renard \cite{CheRen}; later, M\'egarban\'e \cite{Meg}
computed their $\FF_q$-traces for many small~$q$, in complete
agreement with the counting data of \cite{BFvdG2}. In denoting
these motives, we essentially follow the notation of 
\cite{CheRen} and \cite{Meg}; e.g., the Hodge degrees of
$M[23,13,5]$ are
$$ 0,\quad 5,\quad 9,\quad 14,\quad 18,\quad 23 $$
(so the successive widths are $23$, $13$, and $5$).

\bigskip

We have explicitly computed all (spaces of) concomitants of ternary
quartics of degree at most~$6$. 
We have also determined the subspaces of concomitants vanishing
to order at least $r$ along $DC$, for each $r\ge0$.
(The computations are
quite involved and we will discuss them in a future paper.)

In particular, for $d=4$, we find nonzero concomitants vanishing
along $DC$ exactly in the $13$ following cases:
$$
\mu=(14,4,1),\quad (13,5,1),\quad (12,5,2),\quad (12,4,3),\quad
(11,7,1),\quad (11,6,2),\quad (11,5,3),$$
$$(10,7,2),\quad (10,6,3),\quad(10,5,4), \quad (9,7,3),\quad (9,5,5), \quad
(8,7,4).$$
The corresponding motives have been identified in $10$ of the $13$ cases.
Their dimensions are $4$, $6$, or $8$. The work of M\'egarban\'e \cite{Meg}
has played a crucial role here. There are reasons to believe
that in the $3$ remaining cases the motives will be $12$-dimensional; in any
case, these motives have not yet been identified.

The conjectural formulas for the Euler characteristics
$e_c(\M3,\Vla)$ with $\mu$ odd are rather complicated and
contain many more terms than just the motives mentioned
above. 
The techniques and computations that were used to obtain
the formulas, and the formulas themselves, will be discussed elsewhere.

\end{section}
\begin{section}{Appendix: Teichm\"uller modular forms extend 
to $\barM{g}$ for $g\ge3$} \label{appendix}

The result below was found 
by Farkas, Pandharipande, and the second author, after
a talk in Berlin (January 2019), in which 
Proposition~\ref{extensiontoM3bar} and its proof were presented.
The visit to Berlin was supported by the Einstein Stiftung.

\bigskip
Let $\M{g}$ denote the moduli space of curves of genus $g\ge2$.
The Torelli morphism $t\colon\M{g}\to \A{g}$
is a morphism of algebraic stacks.
By pullback under $t$
 we obtain the Hodge bundle ${\EE}^{\prime}$
on $\M{g}$ and for each irreducible representation $\rho$ of ${\rm GL}(g)$
a vector bundle ${\EE}^{\prime}_{\rho}$ on $\M{g}\,$. Sections of such a bundle
${\EE}^{\prime}_{\rho}$ are called Teichm\"uller modular forms of degree~$g$.
The vector bundle ${\EE}^{\prime}$ and hence all the
${\EE}^{\prime}_{\rho}$ extend in a natural way over the
Deligne-Mumford compactification
$\overline{\mathcal M}_g\,$.
The purpose of this note is to
show that a holomorphic section of
${\EE}^{\prime}_{\rho}$ automatically extends to a holomorphic section of
the extended bundle, for $g\ge3$. (As Ichikawa already observed,
the result fails for $g=2$: Igusa's cusp form $\chi_{10}$ vanishes
exactly along $\Delta_0$ and (doubly) along $\Delta_1\,$, so its inverse
is a regular section of $\det^{-10}({\EE}^{\prime})$ on $\M2$ which
does \emph{not} extend to $\barM2\,$.)

\begin{proposition}
Let $g\ge3$ be an integer.
Let $\rho$ be an irreducible representation of ${\rm GL}(g)$
and let ${\EE}^{\prime}_{\rho}$ be the bundle on $\barM{g}$
arising from the Hodge bundle ${\EE}^{\prime}$ of rank $g$
by applying the Schur functor corresponding to $\rho$.
Then a section $s$ of ${\EE}^{\prime}_{\rho}$ over $\M{g}$
extends to a regular section of ${\EE}^{\prime}_{\rho}$
over $\barM{g}\,$.
\end{proposition}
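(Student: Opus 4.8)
The plan is to show that the meromorphic extension of $s$ to $\barM{g}$ has no pole along any boundary divisor. Since $\EE^{\prime}_{\rho}$ is locally free, hence reflexive, and $\barM{g}$ is a smooth (in particular normal) Deligne--Mumford stack, a section over $\M{g}$ extends automatically across any closed substack of codimension $\ge 2$; thus it suffices to control the order of pole along the generic point of each irreducible boundary divisor $\Delta_0,\Delta_1,\dots,\Delta_{\lfloor g/2\rfloor}$. First I would set up the local model: pulling back along a disc $\Delta\to\barM{g}$ meeting a fixed $\Delta_j$ transversally yields a one-parameter smoothing of a stable curve, whose relative dualizing sheaf gives the canonical (Deligne) extension of the Hodge bundle, with the residue of the Gauss--Manin connection equal to the monodromy logarithm $N$. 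A would-be pole of order $k$ of $s$ along $\Delta_j$ has a well-defined leading Laurent coefficient, a section of $\EE^{\prime}_{\rho}|_{\Delta_j}\otimes N_{\Delta_j}^{\otimes k}$, where $N_{\Delta_j}$ is the normal bundle; extension across $\Delta_j$ is then equivalent to the vanishing of all these residues.

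For the separating divisors $\Delta_i$ with $i\ge 1$ I would exploit that the generic curve is of compact type, so the Jacobian stays abelian and $N=0$. Conceptually the clean reason is that, under the extended Torelli morphism, a neighbourhood of $\Delta_i$ maps into a neighbourhood of the decomposable locus $\A{i}\times\A{g-i}\subset\A{g}$, and for $g\ge3$ this locus has codimension $i(g-i)\ge g-1\ge 2$; combined with Koecher's principle on $\A{g}$ this forces the residue to vanish. To make this intrinsic for all $g$ I would instead use the clutching map $\xi_i\colon\overline{\mathcal M}_{i,1}\times\overline{\mathcal M}_{g-i,1}\to\barM{g}$, under which $\EE^{\prime}$ restricts to $\mathrm{pr}_1^*\EE^{\prime}\oplus\mathrm{pr}_2^*\EE^{\prime}$ and $\mathcal O(\Delta_i)|_{\Delta_i}$ to $-\psi_1-\psi_2$; the residue then lives in a sum of groups of the shape $H^0(\overline{\mathcal M}_{i,1},\EE^{\prime}_{\sigma}\otimes\mathcal O(-k\psi))\otimes H^0(\overline{\mathcal M}_{g-i,1},\EE^{\prime}_{\tau}\otimes\mathcal O(-k\psi))$, and I would prove these vanish by a genus induction together with the positivity of $\psi$, the point being that for $g\ge3$ at least one factor has genus $\ge2$.

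For the non-separating divisor $\Delta_0$ the monodromy is a single Dehn twist, so $N$ is a nonzero nilpotent and one cannot argue by compact type. Here I would use the limiting mixed Hodge structure: the normalization has geometric genus $g-1$, the restriction $\EE^{\prime}|_{\Delta_0}$ is an extension of $\EE^{\prime}_{g-1}$ by $\mathcal O$, and Griffiths transversality forces the residue, which lies in the top Hodge piece $F^{g}$, to be annihilated by $N$ in the relevant weight-graded piece. This is a Koecher-type argument whose cohomological vanishing input is available precisely because the effective genus at the boundary is $g-1\ge 2$.

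The hard part will be exactly the vanishing of the residue, i.e.\ the boundedness of $s$ near the boundary; everything else is formal. This is the step that genuinely fails at $g=2$: there the product locus $\A{1,1}\subset\A2$ has codimension one, and Igusa's $\chi_{10}$, whose divisor is supported on the boundary, gives a non-extending inverse $\chi_{10}^{-1}$. Thus the whole weight of the argument, and the only place where the hypothesis $g\ge3$ is used, is in pushing the decomposable loci to codimension $\ge 2$ and the boundary geometric genus to $\ge 2$, so that the leading residues are compelled to vanish; I expect the delicate bookkeeping to be the precise determination of $\EE^{\prime}_{\rho}|_{\Delta_j}$ and $N_{\Delta_j}$ together with the stacky normalization factors along $\Delta_i$ and $\Delta_0$.
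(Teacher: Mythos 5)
Your overall architecture is right, and your treatment of the separating divisors is essentially the paper's argument: after discarding the Torelli/Koecher idea (which is indeed unusable --- a section of $\EE^{\prime}_{\rho}$ over $\M{g}$ is a Teichm\"uller form and does not live on $\A{g}$, and for $g\ge4$ the Torelli locus itself has high codimension, so Koecher's principle on $\A{g}$ never applies), the clutching description $\EE^{\prime}|_{\Delta_i}\cong \mathrm{pr}_1^*\EE^{\prime}\oplus\mathrm{pr}_2^*\EE^{\prime}$ with $\mathcal{O}(\Delta_i)|_{\Delta_i}=-\psi_1-\psi_2$ and the K\"unneth factorization reduce everything to $H^0(\barM{h,1},\EE^{\prime}_{\sigma}\otimes\mathcal{O}(-k\psi))=0$ for $h\ge2$, $k\ge1$, which is exactly the vanishing the paper proves. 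Do note, though, that the mechanism is not bare ``positivity of $\psi$'' (that cannot beat the positivity of $\EE^{\prime}_{\sigma}$ by itself, and no genus induction is needed): one restricts to the complete fibres $C$ of $\M{h,1}\to\M{h}$, on which $\EE^{\prime}$ --- hence $\EE^{\prime}_{\sigma}$ --- is \emph{trivial} while $\psi$ restricts to $K_C$ of positive degree; the fibrewise triviality of the Hodge bundle is the essential extra input, and these fibres fill out the stratum.

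The genuine gap is at $\Delta_0$. What you propose there --- limiting mixed Hodge structure, Griffiths transversality, the residue ``lying in $F^g$'' and being ``annihilated by $N$'' --- does not apply: $s$ is an arbitrary holomorphic section of the associated bundle $\EE^{\prime}_{\rho}$, not a flat section of the local system, so neither Griffiths transversality nor the monodromy logarithm constrains its polar residue in any way. Worse, any argument of this local-at-the-boundary nature \emph{must} fail, because the local picture along $\Delta_0$ (Dehn-twist monodromy, the extension $0\to\FF^{\prime}\to\EE^{\prime}\to\mathcal{O}\to0$ given by the residue at the node) is the same for $g=2$, where the statement is false: $\chi_{10}$ vanishes simply along $\Delta_0$, so $\chi_{10}^{-1}$ is regular on $\M{2}$ with a pole along $\Delta_0$, not only along $\Delta_1$. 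The hypothesis $g\ge3$ can only enter through global geometry, and that is what the paper supplies: on the double cover $D_2\cong\mathcal{C}_{g-1}^2$ of the relevant open part of $\Delta_0$, restrict to the complete curves $B$ obtained by varying the second point $Q$ on a fixed one-pointed smooth $(C,P)$ of genus $g-1\ge2$; there $\mathcal{O}(\Delta_0)|_B\cong -K_C(2P)$ has negative degree, but --- the real difficulty your sketch never confronts --- $\EE^{\prime}|_B$ is \emph{not} trivial: only the subbundle $\FF^{\prime}$ (the genus-$(g-1)$ Hodge bundle) is constant, while the extension by $\mathcal{O}$ varies with $Q$. The paper kills sections of $\EE^{\prime}_{\rho}(k\Delta_0)$ on $B$ by a filtration argument on tensor powers: repeated application of the residue maps pushes a putative section of $(\EE^{\prime})^{\otimes n}(k\Delta_0)|_B$ down into $(\FF^{\prime})^{\otimes n}(k\Delta_0)|_B$, which is trivial tensor negative, hence zero; one then reduces a general $\rho$ to $\rho_g=0$ (using triviality of $\det\EE^{\prime}$ along $B$) and realizes $\EE^{\prime}_{\rho}$ as a summand of $(\EE^{\prime})^{\otimes|\rho|}$. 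Without a substitute for this step, your proof does not go through at $\Delta_0$.
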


\begin{proof}
We first show that $s$ extends over the boundary divisors
parametrizing reducible curves.
Let $\Delta_i$ with $0<i\le g/2$ be such a divisor. Let
$D$ be the open boundary stratum of $\Delta_i\,$.
So $D\cong\M{i,1}\times\M{g-i,1}$ or its quotient
by $S_2$ when $i=g/2$. Let $X=\M{g}\cup D$. We show
that the global sections of ${\EE}^{\prime}_{\rho}$
on $\M{g}$ agree with those on $X$, as follows.

On $X$, we have the following exact sequences:
$$0\to{\mathcal O}(-D)\to{\mathcal O}\to{\mathcal O}_D\to 0,$$
$$0\to{\mathcal O}\to{\mathcal O}(D)\to{\mathcal O}(D)_D\to 0,$$
$$0\to{\EE}^{\prime}_{\rho}\to{\EE}^{\prime}_{\rho}(D)
\to({\EE}^{\prime}_{\rho}(D))|_D\to 0.$$
We want
$$H^0(X,{\EE}^{\prime}_{\rho})=H^0(X,{\EE}^{\prime}_{\rho}(D))$$
and obtain this by showing that
$$H^0(X,({\EE}^{\prime}_{\rho}(D))|_D)=H^0(D,{\EE}^{\prime}_{\rho}(D))=0.$$
For the latter vanishing, consider the complete curve $B$ in $D$
obtained by varying the attachment point on a given smooth curve
$C$ of genus $g-i$ (since $g\ge3$ we have $g-i\ge2$). On such a curve,
${\EE}^{\prime}$ and thus ${\EE}^{\prime}_{\rho}$ are constant, so a direct sum
of trivial line bundles. The line bundle ${\mathcal O}(D)|_B$
is $-K_C\,$, of strictly negative degree, so its global sections on
$B$ are zero. The same holds then for the global sections
of ${\EE}^{\prime}_{\rho}(D)$ on $B$ and then also on $D$, since the curves
$B$ fill out $D$.
Thus
$$H^0(X,{\EE}^{\prime}_{\rho})=H^0(X,{\EE}^{\prime}_{\rho}(D))$$
as desired. By twisting more with ${\mathcal O}(D)$ we find
similarly that
$$H^0(X,{\EE}^{\prime}_{\rho})=H^0(X,{\EE}^{\prime}_{\rho}(kD))$$
for an arbitrary positive integer $k$. Thus a global section
of ${\EE}^{\prime}_{\rho}$ on $\M{g}\,$, which a priori could have
a pole along $D$ of some order, is in fact regular along $D$
and extends to $X$.

Next, we consider $\Delta_0\,$. Let $D$ be the open part consisting
of the open boundary stratum and the stratum corresponding to
smooth curves of genus $g-1$ with a singular elliptic tail attached.
So $D\cong ({\mathcal C}_{g-1}^2) /S_2\,$. Again, let $X=\M{g}\cup D$.
We proceed as above, but for the argument on $D$ we may as well
work on the double cover $D_2\cong {\mathcal C}_{g-1}^2\,$.
Let $B$ in $D$ be the complete curve obtained
by varying the (second) point $Q$ on a given one-pointed smooth curve $(C,P)$
of genus $g-1$. (When $Q=P$, we get a stable curve in the smaller
stratum, of course.) Now ${\mathcal O}(D)|_B$ is $-K_C(2P)$, again
of negative degree.
On $D_2\,$, the Hodge bundle ${\EE}^{\prime}$ sits in an exact sequence
$$ 0\to {\FF}^{\prime} \to {\EE}^{\prime}
\,{\buildrel r \over \longrightarrow}\,
 {\mathcal O}_{D_2}\to 0,$$
where ${\FF}^{\prime}$ is the Hodge bundle pulled back from
genus $g-1$ and the map $r$ to ${\mathcal O}_{D_2}$ is obtained by taking
the residue at~$P$. The restriction of ${\FF}^{\prime}$ to $B$
is trivial. A global section $t$ of ${\EE}^{\prime}(D)|_{D_2}$ on $B$ gives
a global section of ${\mathcal O(D)}_{D_2}\,$ on $B$, which must be trivial,
so $t$ comes from a section of ${\FF}^{\prime}(D)|_{D_2}$ on $B$, which
must be trivial as well.
To show that a global
section of ${\EE}^{\prime}_{\rho}(D)$ on $B$ also must vanish,
we argue as follows.
Firstly, global sections of $(({\EE}^{\prime})^{\otimes n})(D)$ on $B$
vanish for every $n$; e.g., for $n=3$ a global section $t$ gives via
$r\otimes r\otimes r$ a global section of ${\mathcal O}(D)$,
which vanishes;
so $t$ comes from a section of the subbundle
$$({\FF}^{\prime}\otimes {\EE}^{\prime} \otimes {\EE}^{\prime}
+ {\EE}^{\prime}\otimes {\FF}^{\prime} \otimes {\EE}^{\prime}
+ {\EE}^{\prime}\otimes {\EE}^{\prime} \otimes {\FF}^{\prime})(D)$$
of rank $g^3-1$; applying
$r\otimes r\otimes 1$, $r\otimes 1\otimes r$, and $1\otimes r\otimes r$,
we find $t$ must come from a section of
$$({\FF}^{\prime}\otimes {\FF}^{\prime} \otimes {\EE}^{\prime}
+ {\FF}^{\prime}\otimes {\EE}^{\prime} \otimes {\FF}^{\prime}
+ {\EE}^{\prime}\otimes {\FF}^{\prime} \otimes {\FF}^{\prime})(D);$$
applying
$r\otimes 1\otimes 1$, $1\otimes r\otimes 1$, and $1\otimes 1\otimes r$,
we find $t$ comes from a section of
$$({\FF}^{\prime}\otimes {\FF}^{\prime} \otimes {\FF}^{\prime})(D),$$
which must vanish.
Next, on $D_2$ we have that $\det({\EE}^{\prime})$ is trivial. So with
$\rho=(\rho_1\ge\rho_2\ge\dots\ge\rho_g)$, we may and will assume
that $\rho_g=0$. Finally, ${\EE}^{\prime}_{\rho}$ occurs in the
decomposition of $({\EE}^{\prime})^{\otimes n}$ as a summand, with
$n=|\rho|=\rho_1+\dots+\rho_{g-1}$. More precisely, the bundle
of ${\rm GL}(g)$-equivariant homomorphisms from
${\EE}^{\prime}_{\rho}$ to $({\EE}^{\prime})^{\otimes n}$ is free
of rank equal to the multiplicity of the former
in the decomposition of the latter. Choose a nonzero section $u$ of this
bundle; via $u$, a section of ${\EE}^{\prime}_{\rho}$ gives a section of
$({\EE}^{\prime})^{\otimes n}$, by taking the zero section of the
other isotypical components. It follows that a global section of
${\EE}^{\prime}_{\rho}(D)$ on $B$ necessarily vanishes, as claimed.
The same holds then for the global sections on $D$ and then also
for ${\EE}^{\prime}_{\rho}(kD)$ for positive~$k$.

We conclude that a section of ${\EE}^{\prime}_{\rho}$ over $\M{g}$
extends over nonempty open parts of each boundary divisor. This is enough,
since the complement is of codimension~$2$ and $\barM{g}$ is a smooth stack.
\end{proof}
\end{section}



\begin{thebibliography}{99}
\bibitem{A-C} P.\ Aluffi, F.\ Cukierman: {\sl Multiplicities of discriminants.}
Manuscripta Mathematica \textbf{78} (1993), 245--258.

\bibitem{Jonas} J. Bergstr\"om: {\sl Cohomology of moduli spaces of curves 
of genus three via point counts.} 
J. Reine Angew. Math. \textbf{622} (2008), 155--187.

\bibitem{BFvdG} J.\ Bergstr\"om, C.\ Faber, G.\ van der Geer:
{\sl Siegel modular forms of genus 2 and level 2: 
cohomological computations and conjectures.} 
Int.\ Math.\ Res.\ Not.\ IMRN (2008), Art.\ ID rnn 100, 20 pp.

\bibitem{BFvdG2} J.\ Bergstr\"om, C.\ Faber, G.\ van der Geer:
{Siegel modular forms of degree three and
the cohomology of local systems.}
Selecta Math. (N.S.) \textbf{20} (2014), no. 1, 83--124.

\bibitem{BFvdG3} J.\ Bergstr\"om, C.\ Faber, G.\ van der Geer:
Siegel Modular Forms of Degree Two and Three. 2017.
{\tt http://smf.compositio.nl}

\bibitem{BGHZ} J.\ Bruinier, G.\ van der Geer, G.\ Harder, D.\ Zagier:
{\sl The 1-2-3 of modular forms.} Universitext. Springer Verlag 2007.

\bibitem{CheLan} G. Chenevier, J. Lannes: {\sl Automorphic forms and even unimodular lattices. Kneser neighbors of Niemeier lattices.} Translated from the French by Reinie Ern\'e. Ergebnisse der Mathematik und ihrer Grenzgebiete (3), 69. Springer, Cham, 2019. xxi+417 pp.

\bibitem{CheRen}  G. Chenevier, D. Renard: {\sl Level one algebraic cusp forms of classical groups of small rank.} Mem. Amer. Math. Soc. \textbf{237} (2015), no. 1121, v+122 pp.

\bibitem{Chipalkatti} J.\ Chipalkatti:
{\sl The Waring loci of ternary quartics.} 
Experimental Math. \textbf{13} (2004), no. 1, 93--101.

\bibitem{CFvdG} F.\ Cl\'ery, C.\ Faber, G.\ van der Geer: 
{\sl Covariants of binary sextics and vector-valued Siegel modular forms of 
genus $2$.} 
Math. Ann. \textbf{369} (2017), no. 3--4, 1649--1669.

\bibitem{CFvdG2} F.\ Cl\'ery, C.\ Faber, G.\ van der Geer:
{\sl Covariants of binary sextics and modular forms 
of degree $2$ with character.}
Math. Comp. \textbf{88} (2019), no. 319, 2423--2441.

\bibitem{C-vdG} F.\ Cl\'ery, G.\ van der Geer: 
{\sl Constructing vector-valued Siegel modular forms
from scalar-valued Siegel modular forms.} 
Pure and Applied Mathematics Quarterly \textbf{11} (2015), no. 1, 21--47. 

\bibitem{Dixmier} J.\ Dixmier:
{\sl On the projective invariants of quartic plane curves.} 
Adv.\ Math.\ \textbf{64} (1987),
279--304.

\bibitem{DSHS}
M. Dutour Sikiri\'c, K. Hulek, A. Sch\"urmann:
{\sl Smoothness and singularities of the perfect form and the second Voronoi
compactification of $\A{g}$.} 
Algebr. Geom. \textbf{2} (2015), no. 5, 642--653.

\bibitem{Elsenhans} A.\ Elsenhans:
{\sl Explicit computations of invariants of plane quartic curves.}
J.\ Symbolic Comput.\ \textbf{68} (2015), part 2, 109--115. 


\bibitem{Faltings} G.\ Faltings:
{\sl On the cohomology of locally symmetric Hermitian spaces}.
Lecture Notes in Math. \textbf{1029}, Springer-Verlag, Berlin, 1983.

\bibitem{F-C} G.\ Faltings, C.-L.\ Chai:
{\sl Degeneration of abelian varieties.}
Ergebnisse der Mathematik und ihrer Grenzgebiete (3), 22. 
Springer-Verlag, Berlin, 1990.

\bibitem{F-S} M.\  Fedorchuk, D.\ Smyth:
{\sl Alternate compactifications of moduli spaces of curves.}
In: Handbook of moduli. Vol.\ I, 331--413, Adv.\ Lect.\ Math.\ 
{\bf 24}, Int.\ Press, Somerville, MA, 2013. 

\bibitem{Frobenius} G.\ Frobenius: 
{\sl \"Uber die Jacobischen Functionen 
dreier Variabelen.}
Journal f\"ur die reine und angewandte Math. \textbf{105}
(1889), 35--100.

\bibitem{vGvdG} B.\ van Geemen, G.\ van der Geer: 
{\sl Kummer varieties and the moduli spaces of abelian varieties.}
Amer.\ J.\ Math. \textbf{108} (1986), 615--641.

\bibitem{G-SM} S.\ Grushevsky and R.\ Salvati Manni:
{\sl The vanishing of two-point functions
for three-loop superstring scattering amplitudes.} Comm.\ Math.\ Phys.\
\textbf{294} (2010), 343--352.

\bibitem{Harer}  J. Harer: 
{\sl The virtual cohomological dimension of the mapping class 
group of an orientable surface.} 
Invent. Math. \textbf{84} (1986), no. 1, 157--176.

\bibitem{Harris-Morrison} J.\ Harris and I.\ Morrison: 
{\sl Slopes of effective divisors on the moduli space of stable curves.} 
Invent. Math. \textbf{99} (1990), 321--355.

\bibitem{HM} J.\ Harris, D.\ Mumford:
{\sl On the Kodaira dimension of the moduli space of curves.}
With an appendix by W. Fulton.
Invent. Math. \textbf{67} (1982), no. 1, 23--88.

\bibitem{I-T} T.\ Ibukiyama, S.\ Takemori: 
{\sl Construction of theta series of any vector-valued weight and
applications to lifts and congruences.} 
Exp. Math. \textbf{28} (2019), no. 1, 95--114.
 
\bibitem{Ichikawa1} T.\ Ichikawa: 
{\sl On Teichm\"uller modular forms.}  Math.\ Ann.\ \textbf{299} (1994), 731--740.

\bibitem{Ichikawa2} T.\ Ichikawa: 
{\sl Teichm\"uller modular forms of degree $3$.} 
Amer.\ J.\ Math.\ \textbf{117} 
(1995), 1057--1061.

\bibitem{Ichikawa3} T.\ Ichikawa:
{\sl Theta constants and Teichmüller modular forms.} 
J. Number Theory \textbf{61} (1996), no. 2, 409--419.

\bibitem{Ichikawa4} T.\ Ichikawa:
{\sl Generalized Tate curve and integral Teichm\"uller modular forms.} 
Amer. J. Math. \textbf{122} (2000), no. 6, 1139--1174. 

\bibitem{LerRit} R. Lercier, C. Ritzenthaler:
{\sl Siegel modular forms of degree three and invariants of ternary quartics.}
Preprint 2019, {\tt arXiv:1907.07431}.

\bibitem{Igusa1967} J.-I.\ Igusa:
{\sl Modular forms and projective invariants.}
Amer.\ J.\ Math.\ \textbf{89} (1967), 817--855.

\bibitem{Looijenga} E. Looijenga: {\sl Cohomology of $\M{3}$ and $\M{3}^1$}. 
Mapping class groups and moduli spaces of Riemann surfaces 
(G\"ottingen/Seattle, 1991), 205--228, Contemp. Math., 150, Amer. Math. Soc., 
Providence, RI, 1993.

\bibitem{Meg} T. M\'egarban\'e: {\sl Traces des op\'erateurs de Hecke 
sur les espaces de formes automorphes de $\mathrm{SO}_7$, $\mathrm{SO}_8$ ou $\mathrm{SO}_9$ en niveau $1$ et poids arbitraire.} 
J. Th\'eor. Nombres Bordeaux \textbf{30} (2018), no. 1, 239--306.

\bibitem{Mumford-GIT} D.\ Mumford, J.\ Fogarty:
{\sl Geometric invariant theory.}
Ergebnisse der Mathematik und ihrer Grenzgebiete \textbf{34}. 2nd edition, 1981.

\bibitem{Namikawa} Y. Namikawa:
{\sl A new compactification of the Siegel space and degeneration of Abelian
varieties}, I, II. Math. Ann. \textbf{221} (1976), no. 2, 97--141 and
no. 3, 201--241.

\bibitem{Ohno} T.\ Ohno:
{\sl The graded ring of invariants of ternary quartics I -- 
Generators and relations}. Preprint 2007.

\bibitem{OS} G.\ Ottaviani, E.\ Sernesi:
{\sl On singular L\"uroth quartics.} 
Science China Math.\ \textbf{54} (2011), no. 8, 1757--1766.

\bibitem{Salmon} G. Salmon:
{\sl Higher Plane Curves}. 3rd ed., 1879, reprinted by Chelsea, New York.

\bibitem{Sasaki} R.\ Sasaki: {\sl Some remarks on the moduli space of
principally polarized abelian varieties with level $(2,4)$-structure}.
Comp.\ Math.\  \textbf{85} (1993), 87--97.

\bibitem{Schnell} C. Schnell:
{\sl Computing cohomology of local systems}. Unpublished lecture notes,
available at {\tt http://www.math.stonybrook.edu/$\sim$cschnell/}

\bibitem{Shioda} T.\ Shioda: 
{\sl On the graded ring of invariants of binary octavics.} 
Amer.\ J.\ Math.\ \textbf{89} (1967), 1022--1046.

\bibitem{Taibi} O.\ Ta\"ibi:
{\sl Dimensions of spaces of level one automorphic forms 
for split classical groups using the trace formula.}
Annales scientifiques de l'ENS \textbf{50} (2017), 269--344. 
See also the tables at {\tt https://otaibi.perso.math.cnrs.fr/dimtrace/}


\bibitem{Tsuyumine} S.\ Tsuyumine: {\sl 
On Siegel modular forms of degree three.}
Amer.\ J.\ Math.\ \textbf{108} (1986), 755--862.
{\sl Addendum to ``On Siegel modular forms of degree three."}
Amer.\ J.\ Math.\ \textbf{108} (1986), 1001--1003.

\end{thebibliography}
\end{document}